\newcommand{\R}{\mathbb{R}}
\newcommand{\C}{\mathbb{C}}
\newcommand{\Z}{\mathbb{Z}}
\newcommand{\F}{\mathbb{F}}
\newcommand{\E}{\mathcal{E}}
\newcommand{\I}{\mathcal{I}}
\newcommand{\J}{\mathcal{J}}
\DeclareMathOperator{\colim}{\mathrm{colim}}
\DeclareMathOperator{\coker}{\mathrm{coker}}
\DeclareMathOperator{\holim}{\mathrm{holim}}
\newcommand{\Ho}{\mathrm{Ho}}
\DeclareMathOperator{\homog}{--homog--}
\DeclareMathOperator{\poly}{--poly--}
\newcommand{\s}{\mathsf{Sp}}
\DeclareMathOperator{\T}{\mathsf{Top_\ast}}
\newcommand{\Aut}{\mathrm{Aut}}
\DeclareMathOperator{\ind}{\mathrm{ind}}
\DeclareMathOperator{\res}{\mathrm{res}}
\newcommand{\U}{\mathrm{U}}
\newcommand{\BU}{\mathrm{BU}}
\renewcommand{\O}{\mathrm{O}}
\newcommand{\BO}{\mathrm{BO}}
\theoremstyle{definition}
\newtheorem{thm}{Theorem}[section]
\newtheorem{prop}[thm]{Proposition}
\newtheorem{lem}[thm]{Lemma}
\newtheorem{cor}[thm]{Corollary}
\newtheorem{example}[thm]{Example}
\newtheorem{definition}[thm]{Definition}
\newtheorem{rem}[thm]{Remark}
\newcommand*{\centerfloat}{%
  \parindent \z@
  \leftskip \z@ \@plus 1fil \@minus \textwidth
  \rightskip\leftskip
  \parfillskip \z@skip}
  \newcommand{\adjunction}[4]{
\xymatrix{
#1:#2 \ar@<0.7ex>[r] &
\ar@<0.7ex>[l] #3:#4
}}
\newcommand{\vsim}{\rotatebox[origin=c]{-90}{$\sim$}}
\newcommand{\dsim}{\rotatebox[origin=c]{-45}{$\sim$}}
\begin{document}

\title{Comparing the orthogonal and unitary functor calculi}
\author{Niall Taggart}
\email{ntaggart@mpim-bonn.mpg.de}
\address{Max Planck Institute for Mathematics, Vivatsgasse 7, 53111 Bonn, Germany}

\begin{abstract}
The orthogonal and unitary calculi give a method to study functors from the category of real or complex inner product spaces to the category of based topological spaces. We construct functors between the calculi from the complexification-realification adjunction between real and complex inner product spaces. These allow for movement between the versions of calculi, and comparisons between the Taylor towers produced by both calculi. We show that when the inputted orthogonal functor is weakly polynomial, the Taylor tower of the functor restricted through realification and the restricted Taylor tower of the functor agree up to weak equivalence. We further lift the homotopy level comparison of the towers to a commutative diagram of Quillen functors relating the model categories for orthogonal calculus and the model categories for unitary calculus. 
\end{abstract}

\maketitle

\section{Introduction}

The orthogonal and unitary calculi allow for the systematic study of functors from either the category of real inner product spaces, or the category of complex inner product spaces, to the category of based topological spaces. The motivating examples are $\BO(-) \colon V \longmapsto \BO(V)$, where $\BO(V)$ is the classifying space of the orthogonal group of $V$, and $\BU(-)\colon W \longmapsto \BU(W)$ where $\BU(W)$ is the classifying space of the unitary group of $W$. The foundations of orthogonal calculus were originally developed by Weiss in \cite{We95}, and later converted to a model category theoretic framework by Barnes and Oman in \cite{BO13}. From the unitary calculus perspective, it has long been known to the experts, with the foundations and model category framework developed by the author in \cite{Ta19}. 

In this paper we use the similarity between real and complex vector spaces, namely the complexification-realification adjunction to give a formal comparison of the calculi both on the homotopy level by comparing the towers, and on the model category level by constructing Quillen functors between the model categories for orthogonal calculus and the model categories for unitary calculus. In particular, this paper will allow for a more methodical way of using the calculi together, and to transfer calculations between them. 

We cover the basic background of the calculi in Section \ref{section: the calculi}. With this background in place, we begin with a comparison of the input functors in Section \ref{section: input functors}. In particular we construct two Quillen adjunctions between the input categories with precomposition with realification and precomposition with complexification respectively being right Quillen functors. 

Denote by $\E_0^\mathbf{O}$ the category of input functors for orthogonal calculus, that is, $\T$-enriched functors from the category of real inner product spaces to the category of based spaces, and denote by $\E_0^\mathbf{U}$ the unitary calculus analogue. For a full definition of these categories, see Definition \ref{def: input functors}. The realification of complex vector spaces induces a functor $r^*\colon \E_0^\mathbf{O} \to \E_0^\mathbf{U}$, and the complexification of real vector spaces induces a functor $c^*\colon \E_0^\mathbf{U} \to \E_0^\mathbf{O}$, full constructions of such are given in Section \ref{section: input functors}. These functors behave well with respect to homogeneous and polynomial functors, see Section \ref{section: poly and homog}, where we prove the following as Lemma \ref{homog r} and Lemma \ref{homog for c}. 

\begin{lem}
\hspace{10cm}
\begin{enumerate}
\item If an orthogonal functor $F$ is $n$-homogeneous, then $r^*F$ is $n$-homogeneous, where $r^*\colon \E_0^\mathbf{O} \to \E_0^\mathbf{U}$ is precomposition with the realification functor. 
\item If a unitary functor $F$ is $n$-homogeneous, then $c^*F$ is $(2n)$-homogeneous, where $c^*\colon \E_0^\mathbf{U} \to \E_0^\mathbf{O}$ is precomposition with the complexification functor. 
\end{enumerate}
\end{lem}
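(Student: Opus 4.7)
The plan is to reduce both parts to the classification theorems for $n$-homogeneous functors in each calculus. By Weiss's classification, any $n$-homogeneous orthogonal functor $F$ is weakly equivalent to $V \mapsto \Omega^\infty\bigl((\Theta \wedge S^{V \otimes_{\R} \R^n})_{h\O(n)}\bigr)$ for some spectrum $\Theta$ with $\O(n)$-action; the analogous result for unitary calculus (from the author's earlier work \cite{Ta19}) presents an $n$-homogeneous unitary functor as $W \mapsto \Omega^\infty\bigl((\Psi \wedge S^{W \otimes_{\C} \C^n})_{h\U(n)}\bigr)$ for a spectrum with $\U(n)$-action. Since $r^*$ and $c^*$ preserve weak equivalences, it suffices in each part to rewrite the pulled-back functor in the classified form of the target calculus.

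For part (1), I would substitute $V = W_{\R}$ in Weiss's formula and use the canonical real isomorphism $W_{\R} \otimes_{\R} \R^n \cong (W \otimes_{\C} \C^n)_{\R}$ to identify $S^{V \otimes_{\R} \R^n}$ with $S^{W \otimes_{\C} \C^n}$; the left-hand $\O(n)$-action corresponds under this identification to the action through the inclusion $\O(n) \hookrightarrow \U(n)$ on $\C^n$. The induction equivalence $X_{h\O(n)} \simeq (\U(n)_+ \wedge_{\O(n)} X)_{h\U(n)}$ then rewrites $r^*F(W)$ in the unitary classification form with classifying spectrum $\U(n)_+ \wedge_{\O(n)} \Theta$, and $n$-homogeneity follows from the unitary classification. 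Part (2) is symmetric: using the real isomorphism $V_{\C} \otimes_{\C} \C^n \cong V \otimes_{\R} \R^{2n}$ (equivariant for $\U(n) \hookrightarrow \O(2n)$ acting on $\R^{2n}$) and induction along $\U(n) \hookrightarrow \O(2n)$, one rewrites $c^*F(V)$ in the classification form of a $(2n)$-homogeneous orthogonal functor, with classifying spectrum $\O(2n)_+ \wedge_{\U(n)} \Psi$. The doubling of degree is forced by the dimension count: $n$ copies of $V_{\C}$ give $2n$ copies of $V$ as real vector spaces.

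The main obstacle I anticipate is careful bookkeeping of the group actions on these sphere identifications. One must verify that the underlying isomorphisms of real vector spaces intertwine the natural $\O(n)$-action (respectively $\U(n)$-action) on the domain with the restricted action along the appropriate subgroup inclusion on the codomain, so that after applying induction the resulting spectrum truly carries the $\U(n)$- (respectively $\O(2n)$-) action required by the classification. Once this is made explicit via the canonical isomorphism $\C \otimes_{\R} \R^n \cong \C^n$, both parts become formal consequences of the induction-restriction equivalence for homotopy orbits combined with the classification theorems.
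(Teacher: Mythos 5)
Your proposal is correct and takes essentially the same route as the paper: invoke the two classification theorems (Proposition \ref{homog characterisation}), identify the $\O(n)$-equivariant sphere $S^{\R^n\otimes_\R W_\R}$ with the restriction of $S^{\C^n\otimes_\C W}$, and pass the classifying spectrum through $\U(n)_+\wedge_{\O(n)}(-)$ (resp.\ $\O(2n)_+\wedge_{\U(n)}(-)$). The only stylistic difference is that you appeal directly to the standard induction--restriction equivalence $X_{h\O(n)}\simeq(\U(n)_+\wedge_{\O(n)}X)_{h\U(n)}$, whereas the paper re-derives it explicitly by cofibrantly replacing with $E\O(n)_+\wedge\Psi_F^n$ and comparing strict with homotopy orbits.
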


Utilising the Taylor tower for an inputted functor $F$, and the above result on homogeneous functors, we prove the following. This result appears as Theorem \ref{poly for r} and Theorem \ref{n-poly for c} in the text. This result only applies to reduced functors, that is, those with trivial $0$--polynomial approximation.

\begin{thm}
\hspace{10cm}
\begin{enumerate}
\item If a reduced orthogonal functor $F$ is $n$-polynomial, then $r^*F$ is $n$-polynomial, where $r^*\colon \E_0^\mathbf{O} \to \E_0^\mathbf{U}$ is precomposition with the realification functor. 
\item If a reduced unitary functor $F$ is $n$-polynomial, then $c^*F$ is $(2n)$-polynomial, where $c^*\colon \E_0^\mathbf{U} \to \E_0^\mathbf{O}$ is precomposition with the complexification functor. 
\end{enumerate}
\end{thm}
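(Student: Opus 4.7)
The plan is to use the Taylor tower of $F$ to reduce the problem to the homogeneous case already handled by the preceding lemma. For a reduced $n$-polynomial functor $F$, the Taylor tower takes the form
$$F \simeq T_nF \to T_{n-1}F \to \cdots \to T_1F \to T_0F \simeq *,$$
where the homotopy fibre of $T_kF \to T_{k-1}F$ is the $k$-th layer $D_kF$, a $k$-homogeneous functor. Here the equivalence $F \simeq T_nF$ uses the $n$-polynomial hypothesis, while $T_0F \simeq *$ uses reducedness.

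Since both $r^*$ and $c^*$ are precomposition functors, they preserve levelwise weak equivalences and commute with the formation of homotopy fibres. Applying $r^*$ (respectively $c^*$) to the tower therefore yields a tower of homotopy fibrations in the target calculus, with fibre $r^*D_kF$ (respectively $c^*D_kF$) at stage $k$. For part (1), the preceding lemma says each $r^*D_kF$ is $k$-homogeneous, hence $k$-polynomial, hence $n$-polynomial since $k \leq n$. Induction on $k$, starting from $r^*T_0F \simeq *$, then shows $r^*T_kF$ is $n$-polynomial for every $0 \leq k \leq n$; taking $k=n$ yields the claim. Part (2) is the same argument, with each $c^*D_kF$ now $(2k)$-homogeneous by the preceding lemma, hence $(2n)$-polynomial since $2k \leq 2n$, and the induction concludes $c^*F \simeq c^*T_nF$ is $(2n)$-polynomial.

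The main obstacle is justifying the inductive step: one needs that the class of $n$-polynomial functors is closed under extensions in homotopy fibre sequences, i.e.\ that an $n$-polynomial base with an $n$-polynomial fibre produces an $n$-polynomial total space. This should follow from the characterisation of $n$-polynomial functors as the fibrant objects in the $n$-polynomial model structures of \cite{BO13} and \cite{Ta19}, since such a class is closed under homotopy limits, but it is worth writing this closure property out explicitly in both calculi, as it is the single non-formal input to the argument. A secondary point to verify is that $r^*$ and $c^*$ genuinely commute with the construction of $D_kF$ up to weak equivalence, so that the tower produced after precomposition really is the Taylor tower of the precomposed functor on layers; this is essentially formal from the levelwise nature of both the homotopy fibre and the defining homotopy (co)limits of $T_k$ and $D_k$.
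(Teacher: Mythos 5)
Your overall strategy, running up the Taylor tower and feeding the layers into the homogeneous lemma, is the same as the paper's, but the fibration sequence you chose introduces a gap that the paper's choice avoids. You work with $D_kF \to T_kF \to T_{k-1}F$, where the homogeneous piece sits as the \emph{fibre}, so the inductive step needs: fibre and base $n$-polynomial implies total space $n$-polynomial. You flag this yourself as the ``single non-formal input,'' but the justification you offer for it does not hold up. Fibrant objects are closed under homotopy limits, and that gives you the converse closure — the homotopy fibre of a map between $n$-polynomial functors is $n$-polynomial, since $\hofibre$ is a homotopy pullback — but the total space $T_kF$ is not a homotopy limit of $D_kF$ and $T_{k-1}F$, so the closure you invoke simply does not apply to the step you need. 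Reconstructing a total space from its fibre and base requires a delooping of the fibre and a classifying map; that extra data is exactly what your argument is silently assuming and not supplying. (One can try to patch it with a five-lemma argument on the map of fibre sequences for $F \to \tau_n F$, but then you are into $\pi_0$/$\pi_1$ bookkeeping that has to be handled carefully, and it is no longer ``essentially formal.'')

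The paper sidesteps this entirely by using the other fibration sequence from Weiss, $T_nF \to T_{n-1}F \to R_nF$, in which $R_nF$ is $n$-homogeneous and $T_nF$ is the \emph{homotopy fibre}. Then Lemma \ref{homog r} gives that $r^*R_nF$ is $n$-homogeneous, hence $n$-polynomial; the inductive hypothesis handles $r^*T_{n-1}F$; and $r^*T_nF$ is the homotopy fibre of a map between $n$-polynomial functors (since $r^*$ is applied levelwise and so commutes with $\hofibre$), hence $n$-polynomial by the uncontroversial closure under homotopy pullbacks. If you want to keep your own write-up, either switch to this delooped fibre sequence (and cite \cite[Corollary 8.3]{We95} for why $R_nF$ is $n$-homogeneous under the reducedness hypothesis), or make the ``closed under extensions'' claim a genuine lemma with an actual proof rather than an appeal to closure under homotopy limits, which is the wrong closure property for the direction you need.
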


In Section \ref{section: poly and homog} we also construct Quillen adjunctions between the respective $n$-polynomial and $n$-homogeneous model structures for the calculi. 

In \cite{Ta19}, the author introduced the notion of weakly polynomial functors. These functors have a good connectivity relationship with their polynomial approximations. We show, in Section \ref{section: weak poly}, that for weakly polynomial functors, the restricted Taylor tower through realification agrees with the Taylor tower for the pre-realified functor.  The following is Theorem \ref{thm: agreeing tower after realification}. The result does not hold in the complexification induced case, since restriction through complexification only picks out even degree polynomial approximations.

\begin{thm}
Let $F$ be a weakly polynomial reduced orthogonal functor. Then the unitary Taylor tower associated to $r^*F$ is equivalent to the pre-realification of the orthogonal Taylor tower associated to $F$. 
\end{thm}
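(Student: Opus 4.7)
The plan is to construct a natural comparison $T_n(r^* F) \to r^*(T_n F)$ of $n$-polynomial approximations and prove it is a weak equivalence for every $n$; these levelwise equivalences then assemble into the desired equivalence of Taylor towers. The map is built from the universal property of polynomial approximation: applying $r^*$ to the unit $F \to T_n F$ yields $r^* F \to r^*(T_n F)$. Since $F$ is reduced, $T_n F$ is also reduced, and hence by Theorem \ref{poly for r} the target $r^*(T_n F)$ is $n$-polynomial. The universal property of $T_n(r^* F)$ among $n$-polynomial functors receiving a map from $r^* F$ then produces a factorisation $T_n(r^* F) \to r^*(T_n F)$, which is compatible with the tower projections by naturality.

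To show this comparison is a weak equivalence, I would proceed by induction on $n$. The base case $n = 0$ is immediate by reducedness. For the inductive step, apply $r^*$ to the homotopy fibration sequence $D_n F \to T_n F \to T_{n-1} F$. Since $r^*$ is precomposition, it is computed objectwise and therefore preserves homotopy limits; hence
\[
r^*(D_n F) \longrightarrow r^*(T_n F) \longrightarrow r^*(T_{n-1} F)
\]
is again a homotopy fibration sequence, whose fibre is $n$-homogeneous by Lemma \ref{homog r}. Comparing with the corresponding fibration sequence $D_n(r^* F) \to T_n(r^* F) \to T_{n-1}(r^* F)$ and using the inductive hypothesis at level $n-1$, it reduces to proving that the induced map $D_n(r^* F) \to r^*(D_n F)$ between $n$-homogeneous layers is a weak equivalence.

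For the homogeneous-layer comparison, I would invoke the classification of $n$-homogeneous functors by spectra with group actions ($O(n)$-spectra in the orthogonal setting and $U(n)$-spectra in the unitary setting). Both $D_n(r^* F)$ and $r^*(D_n F)$ are $n$-homogeneous unitary functors, so it suffices to compare their classifying $U(n)$-spectra up to equivariant weak equivalence. The weakly polynomial hypothesis on $F$ enters precisely here: it ensures that the maps $F \to T_n F$ are sufficiently highly connected in a dimension-wise sense so that the derivative construction commutes with restriction through realification up to honest weak equivalence, rather than merely up to a connectivity correction. The failure of the analogous statement for $c^*$ is explained by the same mechanism: restriction through complexification only detects even-degree approximations, so odd-degree homogeneous layers are necessarily lost.

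The main obstacle is the homogeneous-layer equivalence $D_n(r^* F) \simeq r^*(D_n F)$. Unpacking the construction of the classifying spectrum requires carefully tracking how realification interacts with the cofibre constructions that extract homogeneous layers, and the weakly polynomial hypothesis must be invoked at this step to upgrade connectivity estimates into genuine weak equivalences; without it, one would at best obtain an equivalence of pro-objects rather than of functors.
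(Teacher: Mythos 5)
Your overall strategy — construct a comparison map $T_n^\mathbf{U}(r^*F) \to r^*(T_n^\mathbf{O}F)$ and show it is a weak equivalence level by level — is the same high-level plan as the paper, and your construction of the map via the universal property of $T_n$ together with Theorem \ref{poly for r} is correct. However, there is a genuine gap at the step you yourself flag as ``the main obstacle'': you never actually prove the homogeneous-layer comparison $D_n^\mathbf{U}(r^*F) \simeq r^*(D_n^\mathbf{O}F)$. Your proposed route through the classification of homogeneous functors by spectra does not close the argument, because identifying the classifying $\U(n)$-spectrum of $D_n^\mathbf{U}(r^*F)$ with $\U(n)_+ \wedge^{\mathds{L}}_{\O(n)} \Theta_F^n$ is essentially equivalent to what you are trying to prove — indeed the paper obtains precisely that spectrum-level identification as a \emph{corollary} of the theorem, not as input to it. Your appeal to ``connectivity estimates upgraded to genuine weak equivalences'' gestures at the right mechanism but never makes it precise enough to carry the induction.

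The ingredient you are missing is the agreement machinery, which lets one bypass the induction entirely. The weakly polynomial hypothesis says $\eta_n \colon F \to T_n^\mathbf{O}F$ is an order-$n$ agreement. Lemma \ref{realification of an agreement} shows $r^*$ preserves order-$n$ agreements (because $\dim_\R W_\R = \dim_\R W$), so $r^*\eta_n \colon r^*F \to r^*T_n^\mathbf{O}F$ is again an order-$n$ agreement. Lemma \ref{agreement gives agreeing polynomials} then gives that $T_n^\mathbf{U}(r^*\eta_n) \colon T_n^\mathbf{U}(r^*F) \to T_n^\mathbf{U}(r^*T_n^\mathbf{O}F)$ is a levelwise weak equivalence. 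Since $T_n^\mathbf{O}F$ is reduced and $n$-polynomial, Theorem \ref{poly for r} gives $r^*T_n^\mathbf{O}F \simeq T_n^\mathbf{U}(r^*T_n^\mathbf{O}F)$, and composing these two equivalences yields $T_n^\mathbf{U}(r^*F) \simeq r^*T_n^\mathbf{O}F$ directly, for every $n$, with no induction. The identification of the $D_n$-layers then falls out by taking homotopy fibres of the (now identified) tower maps, rather than being the step on which the $T_n$-comparison depends. Without invoking Lemmas \ref{realification of an agreement} and \ref{agreement gives agreeing polynomials} explicitly, your inductive step cannot be completed.
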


We leave the homotopy level comparisons here and turn to comparing the model categories in Section \ref{section: model categories}. This section introduces the goal for the remainder of the paper. We give a complete diagram, Figure \ref{fig 1}, of Quillen adjunctions between the model categories for the orthogonal and unitary calculi. The remaining sections of the paper are devoted to demonstrating how Figure \ref{fig 1} commutes. 

We start with the categories of spectra in Section \ref{section: spectra}, and use the change of group functors of Mandell and May \cite{MM02}, to construct Quillen adjunctions between spectra with an action of $\O(n)$, $\U(n)$ and $\O(2n)$ respectively. We utilise the Quillen equivalence between orthogonal and unitary spectra of \cite[Theorem 6.4]{Ta19} to show that these change of group functors interact in a homotopically meaningful way with the change of model functor induced by realification. 

In Section \ref{section: intermediate categories} we move to comparing the intermediate categories. These are categories $\O(n)\E_n^\mathbf{O}$ and $\U(n)\E_n^\mathbf{U}$ constructed by Barnes and Oman \cite{BO13}, and the author \cite{Ta19}, which act as an intermediate in the zig-zag of Quillen equivalences for orthogonal and unitary calculus respectively. For this, we introduce two new intermediate categories, $\O(n)\E_n^\mathbf{U}$ and $\U(n)\E_{2n}^\mathbf{O}$ between the standard intermediate categories. These are the standard intermediate categories with restricted group actions through the subgroup inclusions $\O(n) \hookrightarrow \U(n)$ and $\U(n) \hookrightarrow \O(2n)$. We exhibit Quillen equivalences between these intermediate categories and the standard intermediate categories, completing the picture using change of group functors from \cite{MM02}. The resulting diagram of intermediate categories is as follows,
\[
\xymatrix@C=4em{
\O(n)\E_n^\mathbf{O} \ar@<-1ex>[r]_{r^*}^\sim	&		\O(n)\E_n^\mathbf{U} \ar@<1ex>[r]^{\U(n)_+\wedge_{\O(n)}(-)}\ar@<-1ex>[l]_{r_!}		& 		\U(n)\E_n^\mathbf{U}  \ar@<1ex>[l]^{\iota^*}\ar@<-1ex>[r]_{c^*}											& 		\U(n)\E_{2n}^\mathbf{O} \ar@<1ex>[r]^{\O(2n)_+\wedge_{\U(n)}(-)} \ar@<-1ex>[l]_{c_!}^\sim		&		\O(2n)\E_{2n}^\mathbf{O}.  \ar@<1ex>[l]^{\kappa^*}
}
\]
where $\sim$ denotes a Quillen equivalence. 

Finally, Section \ref{section: homotopy categories} completes the task of showing how Figure \ref{fig 1} commutes by giving commutation results for sub-diagrams of Figure \ref{fig 1} on the homotopy category level.

\subsection*{Notation and Conventions}
The use of a superscript $\mathbf{O}$ is to denote the orthogonal calculus, and a superscript $\mathbf{U}$ is to denote the unitary calculus. When the superscript is omitted, we mean the statement applies to both orthogonal and unitary calculus.

We will refer to the category of based compactly generated weak Hausdorff spaces as the category of based spaces and denote this category by $\T$. This is a cofibrantly generated model category with weak equivalences the weak homotopy equivalences and fibrations the Serre fibrations. The set of generating cofibrations shall be denoted $I$, and the set of generating acyclic cofibrations, denoted $J$.

\subsection*{Acknowledgements}
This work forms part of the authors Ph.D project under the supervision of David Barnes, and has benefited greatly from both helpful and encouraging conversations during this supervision. This work has also benefited from enlightening conversations with Greg Arone. We thank the referee for insightful comments which have greatly improved this article.

\section{The Calculi}\label{section: the calculi}
In this section we give an overview of the theory of orthogonal and unitary calculi. Throughout let $\F$ denote either $\R$ or $\C$ and $\Aut(n) = \Aut(\F^n)$ denote either $\O(n)$ or $\U(n)$. For full details of the theories, see \cite{We95, BO13, Ta19}. 

\subsection{Input Functors} Let $\J$ be the category of finite-dimensional $\F$-inner product subspaces of $\F^\infty$, and $\F$-linear isometries. Denote by $\J_0$ the category with the same objects as $\J$ and morphism space $\J_0(U,V) = \J(U,V)_+$. These categories are $\T$-enriched since $\J(U,V)$ may be topologised as the Stiefel manifold of $\dim_\F (U)$-frames in $V$. These categories are the indexing categories for the functors under consideration in orthogonal and unitary calculus. 

\begin{definition}\label{def: input functors}
Define $\E_0$ to be the category of $\T$-enriched functors from $\J_0$ to $\T$. 
\end{definition}

The category $\E_0^\mathbf{O}$ is category of input functors for orthogonal calculus as studied by Weiss and Barnes and Oman \cite{We95, BO13}. Moreover $\E_0^\mathbf{U}$ is the category of input functors for unitary calculus, studied by the author in \cite{Ta19}. These input categories are categories of diagram spaces as in \cite{MMSS01} hence they can be equipped with a projective model structure. 

\begin{prop}
There is a cellular, proper and topological model category structure on the category $\E_0$, with the weak equivalences and fibrations defined to be the levelwise weak homotopy equivalences and levelwise Serre fibrations respectively. The generating (acyclic) cofibrations are of the form $\J_0(U,-) \wedge i$ where $i$ is a generating (acyclic) cofibration in $\T$. 
\end{prop}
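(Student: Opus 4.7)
The plan is to obtain this as the projective (levelwise) model structure on a category of $\T$-enriched diagrams, transferring the standard model structure on $\T$ along a Yoneda-free/evaluation adjunction. For each $U \in \J_0$, evaluation $\Ev_U \colon \E_0 \to \T$ has a left adjoint sending a based space $A$ to the representable $\J_0(U,-) \wedge A$ by the enriched Yoneda lemma. Assembling over all $U$ gives an adjunction
\[
F \colon \prod_{U \in \J_0} \T \rightleftarrows \E_0 \colon \Ev,
\]
where the left-hand side carries the product of the standard (Quillen) model structure on $\T$. I would declare a map in $\E_0$ to be a weak equivalence or fibration exactly when its image under $\Ev$ is one, which matches the desired levelwise description, and identifies the generating (acyclic) cofibrations as images under $F$ of those of $\prod_U \T$, namely $\J_0(U,-) \wedge i$ for $U \in \J_0$ and $i \in I$ (respectively $J$).

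To apply Kan's transfer theorem I need two things: smallness of the domains of $F(I)$ and $F(J)$ relative to cell complexes, and that relative $F(J)$-cell complexes are levelwise weak equivalences. Both reduce to the corresponding statements in $\T$ because colimits and pushouts in $\E_0$ are computed objectwise: at each $V$, the functor $\Ev_V$ sends a relative $F(I)$-cell complex to a relative $I$-cell complex in $\T$ (smashed with the appropriate based space $\J_0(U,V)_+$), and similarly for $J$. Smallness then follows from smallness of the CW-cells in $\T$, and acyclicity follows from acyclicity in $\T$ combined with the fact that pushouts of levelwise weak equivalences along levelwise cofibrations are levelwise weak equivalences. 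This is exactly the argument used for diagram spectra in \cite{MMSS01}, and it delivers the cofibrantly generated model structure with the claimed generators.

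It remains to upgrade this to cellular, proper and topological. Properness is immediate: the generating (acyclic) cofibrations $\J_0(U,-)\wedge i$ are objectwise cofibrations, weak equivalences and fibrations are objectwise by construction, and pushouts and pullbacks are computed objectwise, so left and right properness descend from $\T$. Topologicality amounts to the pushout-product axiom for the evident $\T$-tensor/cotensor structure on $\E_0$; this reduces to the pushout-product axiom in $\T$, since the Leibniz product $(\J_0(U,-)\wedge i)\,\square\, j$ is $\J_0(U,-)$ smashed with $i\,\square\,j$. For cellularity I would verify Hirschhorn's criteria: the domains and codomains of $F(I)$ and $F(J)$ are compact and small relative to cell complexes, and cofibrations in $\E_0$ are effective monomorphisms. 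Each of these is checked one object at a time and follows from the corresponding fact in $\T$. The only step with any subtlety is the effective-monomorphism condition, but since equalisers in $\E_0$ are also computed objectwise, this reduces cleanly to the same statement for $\T$, where it is standard.
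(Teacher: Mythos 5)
Your proof is correct and matches the paper's approach: the paper itself gives no argument but simply cites \cite{MMSS01} for the projective model structure on diagram spaces, and your sketch reconstructs exactly that MMSS01-style transfer argument (levelwise structures via the evaluation adjunction, generators $\J_0(U,-)\wedge i$, and reduction of cellularity, properness, and topologicality to the corresponding objectwise facts in $\T$).
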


\subsection{Polynomial functors} Arguably the most important class of functors in orthogonal and unitary calculi are the $n$-polynomial functors, and in particular the $n$-th polynomial approximation functor. Here we give a short overview of these functors, for full details on these functors see \cite{We95, BO13, Ta19}.

\begin{definition}
A functor $F \in \E_0$ is \textit{polynomial of degree less than or equal $n$} or equivalently \textit{$n$-polynomial} if the canonical map
\[
F(V) \to \underset{0 \neq U \subseteq \F^{n+1}}{\holim}F(U \oplus V) =: \tau_nF(V)
\]
is a weak homotopy equivalence. 
\end{definition}

\begin{definition}
The \textit{$n$-th polynomial approximation}, $T_nF$, of a functor $F \in \E_0$ is defined to be the homotopy colimit of the sequential diagram
\[
\xymatrix{
F \ar[r]^{\rho} &  \tau_nF \ar[r]^{\rho} &  \tau_n^2F \ar[r]^{\rho} &  \tau_n^3F \ar[r]^{\rho} & \cdots.  
}
\]
\end{definition}

Since an $n$-polynomial functor is $(n+1)$-polynomial, see \cite[Proposition 5.4]{We95}, these polynomial approximation functors assemble into a Taylor tower approximating a given input functor. Moreover there is a model structure on $\E_0$ which captures the homotopy theory of $n$-polynomial functors. 

\begin{prop}[{\cite[Proposition 6.5]{BO13}, \cite[Proposition 2.8]{Ta19}}]
There is a cellular proper topological model structure on $\E_0$ where a map $f\colon E \to F$ is a weak equivalence if $T_nf\colon T_nE \to T_nF$ is a levelwise weak equivalence, the cofibrations are the cofibrations of the projective model structure and the fibrations are levelwise fibrations such that 
\[
\xymatrix{
E \ar[r]^f \ar[d]_{\eta_E} & F \ar[d]^{\eta_F} \\
T_nE \ar[r]_{T_nf} & T_n F
}
\]
is a homotopy pullback square. The fibrant objects of this model structure are precisely the $n$-polynomial functors and $T_n$ is a fibrant replacement functor. We call this the $n$-polynomial model structure and it is denoted $n\poly\E_0$.
\end{prop}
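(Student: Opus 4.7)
Since the result is claimed with a citation to \cite{BO13, Ta19}, the plan is to reprise the standard Bousfield-localisation construction used in those references, and the input functor categories $\E_0^\mathbf{O}$ and $\E_0^\mathbf{U}$ behave identically enough that a single proof suffices.

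First, I would take the projective model structure from the preceding proposition as the starting point; it is known to be cofibrantly generated, cellular, proper and topological (it is a category of diagram spaces in the sense of \cite{MMSS01}). To capture $n$-polynomiality I would identify a set $S_n$ of maps in $\E_0$ whose local objects (in the sense of Hirschhorn) are precisely the $n$-polynomial functors. Concretely, if one writes the homotopy limit defining $\tau_n F(V)$ as a representable weighted limit, then for each $V \in \J$ there is a natural map of representables
\[
s_V\colon \gamma_n(V) \longrightarrow \J_0(V,-)
\]
such that $\Map(s_V, F)$ is weakly equivalent to the canonical map $F(V) \to \tau_n F(V)$. Taking $S_n = \{ s_V \wedge (\partial\Delta^k \hookrightarrow \Delta^k)_+ : V \in \J,\, k \geq 0 \}$ (after cofibrant replacement in the projective model structure), an $S_n$-local object is exactly a projectively fibrant $F$ with $F \to \tau_n F$ a levelwise weak equivalence, i.e.\ an $n$-polynomial functor.

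Next I would invoke Hirschhorn's left Bousfield localisation theorem, which applies because the projective model structure is left proper and cellular. This immediately gives a cellular, left proper topological model structure on $\E_0$ with the same cofibrations as the projective structure and with fibrant objects the $n$-polynomial functors. To identify the weak equivalences and fibrations as stated, I would show that $T_n$ is a fibrant replacement: by the convergence/fibrancy argument of Weiss \cite{We95} (also carried out in \cite{BO13} for the real case and \cite{Ta19} for the complex case), $T_n F$ is $n$-polynomial, $F \to T_n F$ is an $S_n$-local equivalence, and $T_n$ preserves levelwise weak equivalences. Consequently a map $f$ is a weak equivalence precisely when $T_n f$ is a levelwise weak equivalence, and a fibration between arbitrary objects is characterised (by the standard argument for fibrations in a Bousfield localisation between non-fibrant objects) as a projective fibration making the $\eta$-square a homotopy pullback.

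The main obstacle is right properness, which is not automatic from Hirschhorn's theorem. For this I would adapt the argument of \cite[Proposition 6.5]{BO13}: given a pullback square along a fibration in the new model structure, one uses that $T_n$ commutes (up to weak equivalence) with homotopy pullbacks of $n$-polynomial functors (since $\tau_n$ is built from a homotopy limit and filtered homotopy colimits preserve finite homotopy limits in $\T$), together with the homotopy-pullback characterisation of fibrations, to check that a pullback of a weak equivalence is a weak equivalence. The topological enrichment is inherited from the projective model structure and the fact that Bousfield localisation of a topological model structure is again topological once one checks the pushout-product axiom against $I$ and $J$, which is straightforward.
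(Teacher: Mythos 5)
Your proposal takes essentially the same route as the cited references (\cite[Proposition 6.5]{BO13} and \cite[Proposition 2.8]{Ta19}): left Bousfield localisation of the projective structure at a set of maps into representables whose local objects are the $n$-polynomial functors, followed by identifying $T_n$ as fibrant replacement and checking right properness separately using the compatibility of $T_n$ with homotopy pullbacks. The only imprecision is notational: the localising maps are of the form $S\gamma_{n+1}(V,-)_+ \to \J_0(V,-)$ (the fibrewise one-point compactification/sphere bundle of the $(n+1)$-st complement bundle, index $n+1$ rather than $n$, and $\gamma_n(V)$ is not itself an object of $\E_0$), but this does not affect the structure of the argument.
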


\subsection{Homogeneous functors} The $n$-th layer of the Taylor tower satisfies the property that it is both $n$-polynomial, and its $(n-1)$-st polynomial approximation vanishes, \cite[Example 2.10]{Ta19}. The class of functors which satisfy this property are called $n$-homogeneous. 

\begin{definition}
A functor $F \in \E_0$ is said to be \textit{$n$--reduced} if $T_{n-1}F$ is trivial, and is said to be \textit{homogeneous of degree $n$} or equivalently \textit{$n$-homogeneous} if it is both $n$-polynomial and $n$--reduced.
\end{definition}

There is a further model structure on $\E_0$ which captures the homotopy theory of $n$-homogeneous functors. Denote by $D_nF$ the homotopy fibre of the map
\[
T_nF \longrightarrow T_{n-1}F.
\] 

\begin{prop}[{\cite[Proposition 6.9]{BO13}, \cite[Proposition 3.13]{Ta19}}]
There is a topological model structure on $\E_0$ where the weak equivalences are those maps $f$ such that $D_nf$ is a weak equivalence in $\E_0$, the fibrations are the fibrations of the $n$-polynomial model structure and the cofibrations are those maps with the left lifting property with respect to the acyclic fibrations. The fibrant objects are $n$-polynomial and the cofibrant-fibrant objects are the projectively cofibrant $n$-homogeneous functors.
\end{prop}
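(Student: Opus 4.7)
The plan is to construct this structure as a right Bousfield localisation (cellularisation) of the $n$-polynomial model structure of the preceding proposition, following the strategy of \cite{BO13} and \cite{Ta19}. Since the $n$-polynomial model structure is cellular and proper, the general machinery of cellularisation produces a new model structure once a set $K$ of cofibrant objects is specified; crucially, the fibrations and fibrant objects of the cellularisation coincide with those of the ambient structure, while the weak equivalences become the $K$-colocal equivalences and the cofibrations are forced by the lifting property against acyclic fibrations. This immediately matches the stated description of the fibrations and fibrant objects, and reduces the entire problem to choosing $K$ appropriately.

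The key step is to select $K$ so that $K$-colocal equivalences coincide with $D_n$-equivalences. A natural candidate is a set of cofibrant representatives in the $n$-polynomial structure of functors which detect only the $n$-th layer of the Taylor tower; for example one can take cofibrant replacements of $D_n(\J_0(U,-) \wedge A)$ where $A$ runs over domains and codomains of the generating cofibrations of $\T$ and $U$ over a skeleton of $\J_0$. Equivalently one uses the generators of the model category of spectra with $\Aut(n)$-action, pulled back along Weiss's equivalence between $n$-homogeneous functors and such spectra.

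With $K$ in hand, one verifies that $K$-colocal equivalences among $n$-polynomial functors coincide with $D_n$-equivalences. Using the fibre sequence $D_nF \to T_nF \to T_{n-1}F$, this reduces to showing that $K$ captures precisely the $n$-homogeneous content, which is the essence of Weiss's classification of $n$-homogeneous functors. Granting this, the cofibrant-fibrant objects are the $K$-colocal $n$-polynomial functors, which by construction are $n$-polynomial functors whose $(n-1)$-polynomial approximation is trivial and which are projectively cofibrant; that is, they are the projectively cofibrant $n$-homogeneous functors.

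The main obstacle is precisely the identification of $K$-colocal equivalences with $D_n$-equivalences. It requires a careful interplay between the cellular structure generated by $K$ and the fibre sequence defining $D_n$, and ultimately leans on Weiss's spectral classification of the $n$-th layer. Everything else --- right properness, the topological enrichment, and the characterisations of fibrations and of fibrant objects --- follows formally from the cellularisation theory applied to the cellular, proper, topological $n$-polynomial model structure.
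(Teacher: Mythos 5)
Your overall strategy --- right Bousfield localisation (cellularisation) of the cellular, proper, topological $n$-polynomial model structure at a suitable set $K$ of cofibrant objects, so that fibrations and fibrant objects are inherited and weak equivalences become $K$-colocal --- is exactly the approach of \cite{BO13} and \cite{Ta19}, which the paper cites for this statement. Your second proposal for $K$ (cofibrant replacements of the images under $\res_0^n/\Aut(n)$ of the free generators of $\Aut(n)\E_n$, i.e.\ objects of the form $\J_n(V,-)/\Aut(n)$ smashed with generating cells, equivalently the generators of spectra with $\Aut(n)$-action pushed through the classification) is the choice those references make, so the proposal essentially matches.

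One caution about your first suggested $K$: taking cofibrant replacements of $D_n(\J_0(U,-)\wedge A)$ risks circularity, since $D_n$ is defined via the homotopy fibre of $T_n \to T_{n-1}$ and the whole point of the argument is to \emph{prove} that the colocal equivalences for $K$ are precisely the $D_n$-equivalences. The references avoid this by taking $K$ to consist of objects built directly from $\J_n$ (the free objects of the intermediate category or of $\Aut(n)$-spectra pushed into $\E_0$) without reference to $D_n$, and then establishing the identification of the localised weak equivalences with $D_n$-equivalences as a theorem. You correctly flag this identification as the crux, but the proposal leaves it unproven; that step --- comparing the fibre sequence $D_nF \to T_nF \to T_{n-1}F$ with $K$-colocality, using that an $n$-polynomial functor with trivial $(n-1)$-polynomialisation is determined by its derivative spectrum --- is where the substantive work lies.
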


In \cite[\S8]{Ta19}, the author gave further characterisations of the $n$-homogeneous model structure. These will prove useful in our comparisons. The results hold true for the orthogonal calculus, with all but identical proofs. 

\begin{prop}[{\cite[Proposition 8.3]{Ta19}}]\label{characterisation of acyclic fibs}
A map $f\colon E \to F$ is an acyclic fibration in the $n$-homogeneous model structure if and only if it is a fibration in the $(n-1)$-polynomial model structure and an $D_n$-equivalence.
\end{prop}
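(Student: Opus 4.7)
The plan is to prove the biconditional by a pasting-lemma argument for homotopy pullbacks, exploiting that $D_n$ is by definition a homotopy fibre.

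Recall that a map is a fibration in the $m$-polynomial model structure precisely when it is a levelwise fibration whose $\eta_m$-naturality square is a levelwise homotopy pullback. By definition, an acyclic fibration in the $n$-homogeneous model structure is an $n$-polynomial fibration that is also a $D_n$-equivalence. Both sides of the claimed biconditional therefore include the conditions that $f$ is a levelwise fibration and a $D_n$-equivalence, so the claim reduces to showing that, for such an $f$, the $T_n$-naturality square is a levelwise homotopy pullback if and only if the $T_{n-1}$-naturality square is.

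The key observation is that, under the assumption that $D_n f$ is a levelwise weak equivalence, the square
\[
\xymatrix{
T_n E \ar[r] \ar[d] & T_{n-1} E \ar[d] \\
T_n F \ar[r] & T_{n-1} F
}
\]
is itself a levelwise homotopy pullback: the horizontal homotopy fibres are $D_n E$ and $D_n F$ by definition of $D_n$, and the induced map between them is $D_n f$, which is a weak equivalence by hypothesis; a commutative square of based spaces whose induced map on horizontal homotopy fibres is a weak equivalence is a homotopy pullback. I would then stack the $\eta_n$-naturality square of $f$ above this square, obtaining as the outer rectangle the $\eta_{n-1}$-naturality square of $f$, via the standard compatibility $T_{n-1}\eta_n \circ \eta_n \simeq \eta_{n-1}$ coming from the tower of polynomial approximations. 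The pasting lemma for homotopy pullbacks of based spaces then yields that the top square is a homotopy pullback if and only if the outer rectangle is, which is the required biconditional.

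The main point requiring care is checking that the pasting argument is carried out levelwise in $\T$ and that the identification of the composite $T_{n-1}\eta_n \circ \eta_n$ with $\eta_{n-1}$ is genuinely available; both should reduce to facts already established for the polynomial tower and to standard properties of homotopy pullback squares, so I do not expect any serious obstacle beyond careful bookkeeping.
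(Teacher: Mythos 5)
Your reduction of both implications to a pasting-lemma step for the squares formed by $\eta_n$, the tower map $T_n \to T_{n-1}$, and $\eta_{n-1}$ is a sensible strategy, and the identification of the composite $E \to T_n E \to T_{n-1}E$ with $\eta_{n-1}$ is indeed available from the construction of the tower. The gap is in the key observation. The assertion that a commutative square of based spaces whose induced map on horizontal homotopy fibres over the basepoints is a weak equivalence must be a homotopy pullback is false. Take
\[
\xymatrix{
\ast \ar[r] \ar[d] & S^0 \ar[d] \\
\ast \ar[r] & \ast
}
\]
with the top horizontal map the inclusion of the basepoint: both horizontal homotopy fibres over the basepoints are contractible, so the induced map between them is a weak equivalence, yet the square is not a homotopy pullback (the homotopy pullback of the cospan is $S^0$, and $\ast \to S^0$ is not an equivalence). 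The correct criterion asks for an equivalence on horizontal homotopy fibres over \emph{every} point of $T_{n-1}F(V)$, whereas $D_n f$ being a weak equivalence only controls the fibre over the basepoint. In terms of the long exact sequence, the five-lemma gives isomorphisms on $\pi_k$ for $k \geq 1$ but breaks down at $\pi_0$: one cannot conclude that $\pi_0(T_n E) \to \pi_0\bigl(T_{n-1}E \times^{h}_{T_{n-1}F} T_n F\bigr)$ is a bijection.

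To repair this you need to import more of the structure of the tower than the bare definition of $D_n$. The relevant input is the classifying-fibration description of the layers: there is a homotopy fibre sequence $T_n G \to T_{n-1}G \to R_n G$ with $R_n G$ $n$-homogeneous and $D_n G \simeq \Omega R_n G$ (this is \cite[Theorem 9.1]{We95} in the orthogonal setting, with the unitary analogue in \cite{Ta19}), so that $T_n G(V) \to T_{n-1}G(V)$ is, up to levelwise weak equivalence, pulled back from the path fibration over $R_n G(V)$. With this one can compare the delooping $R_n f$ rather than just $D_n f$, and the $\pi_0$ bookkeeping becomes tractable. Note also that for one direction of the biconditional you can sidestep the issue entirely: since the $(n-1)$-polynomial model structure is a left Bousfield localisation of the $n$-polynomial one, every $(n-1)$-polynomial fibration is an $n$-polynomial fibration, so the implication from the right-hand condition to the left-hand one needs no pasting at all. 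It is the converse implication where the delooping input is genuinely required, and where your argument as written has a hole.
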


This allows us to characterise the acyclic fibrations between fibrant objects. 

\begin{cor}[{\cite[Corollary 8.4]{Ta19}}]\label{lem: acyclic fibrations in n-homog}
A map $f\colon E \to F$ between $n$-polynomial objects is an acyclic fibration in the $n$-homogeneous model structure if and only if it is a fibration in the $(n-1)$-polynomial model structure.
\end{cor}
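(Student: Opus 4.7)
The plan is to deduce this corollary directly from Proposition \ref{characterisation of acyclic fibs} by examining what a fibration in the $(n-1)$-polynomial model structure looks like when the source and target are already $n$-polynomial. One direction is immediate: if $f$ is an acyclic fibration in the $n$-homogeneous model structure, then Proposition \ref{characterisation of acyclic fibs} tells us that $f$ is a fibration in the $(n-1)$-polynomial model structure (and in addition a $D_n$-equivalence), so there is nothing extra to prove.

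For the converse, suppose $f \colon E \to F$ is a fibration in the $(n-1)$-polynomial model structure, with both $E$ and $F$ being $n$-polynomial. By the characterisation of $(n-1)$-polynomial fibrations, $f$ is a levelwise fibration such that the naturality square
\[
\xymatrix{
E \ar[r]^f \ar[d]_{\eta_E} & F \ar[d]^{\eta_F} \\
T_{n-1}E \ar[r]_{T_{n-1}f} & T_{n-1}F
}
\]
is a homotopy pullback. I would then take vertical homotopy fibres: since the square is a homotopy pullback, the induced map $\hofibre(\eta_E) \to \hofibre(\eta_F)$ is a levelwise weak equivalence.

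The key observation is to identify $\hofibre(\eta_E)$ with $D_nE$ (and similarly for $F$). Since $E$ is $n$-polynomial, the unit $E \to T_nE$ is a weak equivalence, and under this identification the map $\eta_E \colon E \to T_{n-1}E$ is equivalent to the canonical map $T_nE \to T_{n-1}E$ whose homotopy fibre is precisely $D_nE$ by definition. Therefore $D_nf \colon D_nE \to D_nF$ is a weak equivalence, i.e.\ $f$ is a $D_n$-equivalence. Applying Proposition \ref{characterisation of acyclic fibs} in the other direction, $f$ is an acyclic fibration in the $n$-homogeneous model structure, which completes the proof.

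There is no real obstacle here; the content is entirely the identification $\hofibre(\eta_E) \simeq D_nE$ for $n$-polynomial $E$, after which Proposition \ref{characterisation of acyclic fibs} does all the work. The only care needed is to remember that being an $(n-1)$-polynomial fibration is \emph{defined} via the homotopy pullback square above, so the argument is essentially a definitional unwinding combined with the previous proposition.
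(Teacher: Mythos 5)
Your proof is correct and follows the natural approach: the corollary is a direct consequence of Proposition \ref{characterisation of acyclic fibs} once one observes that, for $n$-polynomial $E$, the homotopy fibre of $\eta_E\colon E \to T_{n-1}E$ is identified with $D_nE$ (via the weak equivalence $E \to T_nE$ and the factorisation $E \to T_nE \to T_{n-1}E$), so that the homotopy-pullback characterisation of $(n-1)$-polynomial fibrations supplies the $D_n$-equivalence for free. The paper cites this result from \cite[Corollary 8.4]{Ta19} without reproducing the proof, and that source deduces it in the same way from the characterisation of acyclic fibrations, so your argument matches the intended one.
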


We now turn our attention to the cofibrations.

\begin{lem}[{\cite[Lemma 8.5]{Ta19}}]\label{lem: cofibrations of n-homog}
A map $f\colon X \to Y$ is a cofibration in the $n$-homogeneous model structure if and only if it is a projective cofibration and an $(n-1)$-polynomial equivalence. 
\end{lem}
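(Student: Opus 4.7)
The plan is to prove each direction of the biconditional separately, having first reformulated the right-hand side: ``projective cofibration and $(n-1)$-polynomial equivalence'' is exactly the class of acyclic cofibrations in the $(n-1)$-polynomial model structure, since that model structure shares the projective cofibrations and has $(n-1)$-polynomial equivalences as its weak equivalences. For the easy direction, suppose $f$ is such an acyclic cofibration. Then by definition $f$ has the left lifting property against every fibration in the $(n-1)$-polynomial model structure, and by Proposition \ref{characterisation of acyclic fibs} every acyclic fibration in the $n$-homogeneous model structure is a fortiori a fibration in the $(n-1)$-polynomial model structure, so $f$ is a cofibration in the $n$-homogeneous model structure.

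For the reverse direction, let $f\colon X \to Y$ be a cofibration in the $n$-homogeneous model structure. First I verify $f$ is a projective cofibration: every levelwise acyclic Serre fibration is both a fibration in the $(n-1)$-polynomial model structure (the homotopy-pullback square is trivially so because its horizontal maps are levelwise equivalences) and a $D_n$-equivalence ($D_n$ preserves levelwise equivalences), so by Proposition \ref{characterisation of acyclic fibs} it is an acyclic fibration in the $n$-homogeneous model structure. Hence $f$ lifts against all such maps, which makes $f$ a projective cofibration.

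The harder claim---that $f$ is an $(n-1)$-polynomial equivalence---I would prove by a retract argument against an explicit homotopy pullback. Let $Z := Y \times^h_{T_{n-1}Y} T_{n-1}X$, with $q\colon Z \to Y$ the first projection and $\iota\colon X \to Z$ the canonical map induced by $f$ and $\eta_X$, so that $q \circ \iota = f$. The main obstacle is verifying that $q$ is an acyclic fibration in the $n$-homogeneous model structure; by Proposition \ref{characterisation of acyclic fibs}, this reduces to showing $q$ is a fibration in the $(n-1)$-polynomial model structure and a $D_n$-equivalence, both of which should follow from homotopy-pullback manipulations exploiting that $T_{n-1}X$ and $T_{n-1}Y$ are already $(n-1)$-polynomial (yielding $T_{n-1}Z \simeq T_{n-1}X$ and, via the identification $\hofibre(q) \simeq \hofibre(\eta_Y)$, an equivalence $D_n Z \simeq D_n Y$). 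Granted this, the cofibration property of $f$ produces a lift $\sigma\colon Y \to Z$ with $\sigma f = \iota$ and $q \sigma = \id_Y$, exhibiting $f$ as a retract of $\iota$. A parallel computation shows $\iota$ is itself an $(n-1)$-polynomial equivalence (since $T_{n-1}\iota$ becomes the identity up to equivalence), and since $(n-1)$-polynomial equivalences are closed under retracts, so is $f$.
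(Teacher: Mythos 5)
Your overall strategy is sound and, as far as I can tell, matches the standard right Bousfield localization argument: reformulate the right-hand side as ``acyclic cofibration in the $(n-1)$-polynomial model structure,'' deduce the easy inclusion from Proposition \ref{characterisation of acyclic fibs}, then run a retract argument through $Z := Y \times^h_{T_{n-1}Y} T_{n-1}X$ for the hard inclusion. The ``projective cofibration'' step is correct: levelwise acyclic fibrations are $(n-1)$-polynomial fibrations (both horizontal maps in the defining square are levelwise equivalences) and $D_n$-equivalences, hence acyclic in the $n$-homogeneous structure, so a cofibration there lifts against them.

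There is, however, a slip in your sketch of why $q\colon Z \to Y$ is a $D_n$-equivalence. In the homotopy pullback square defining $Z$, the map $q$ is parallel to $T_{n-1}f\colon T_{n-1}X \to T_{n-1}Y$, so $\hofibre(q) \simeq \hofibre(T_{n-1}f)$, not $\hofibre(\eta_Y)$; the latter is the fibre of the \emph{other} projection $Z \to T_{n-1}X$. Moreover, even if your identification were correct, $\hofibre(\eta_Y)$ is not $D_nY$ (one is the fibre of $Y \to T_{n-1}Y$, the other of $T_nY \to T_{n-1}Y$), so the inference to $D_nZ \simeq D_nY$ would still be unjustified as stated. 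The conclusion is nevertheless true and can be reached directly: since $T_n$ and $T_{n-1}$ preserve homotopy pullbacks and $T_nT_{n-1} \simeq T_{n-1} \simeq T_{n-1}T_{n-1}$, one gets $T_nZ \simeq T_nY \times^h_{T_{n-1}Y} T_{n-1}X$ and $T_{n-1}Z \simeq T_{n-1}X$, whence $D_nZ = \hofibre(T_nZ \to T_{n-1}Z) \simeq \hofibre(T_nY \to T_{n-1}Y) = D_nY$ compatibly with $q$. With that repair the retract argument goes through, and the parallel computation for $\iota$ (using $T_{n-1}(\mathrm{proj}_2)\circ T_{n-1}\iota \simeq T_{n-1}\eta_X$, an equivalence) completes the proof.
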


\begin{cor}[{\cite[Corollary 8.6]{Ta19}}]\label{cofibrant objects of n-homog}
The cofibrant objects of the $n$-homogeneous model structure are precisely those $n$-reduced projectively cofibrant objects.
\end{cor}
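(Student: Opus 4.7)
The plan is to deduce this corollary directly from Lemma \ref{lem: cofibrations of n-homog} by applying that characterisation to the map out of the initial object.

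First, recall that an object $X \in \E_0$ is cofibrant in the $n$-homogeneous model structure precisely when the unique map $\ast \to X$ from the initial object (the constant functor on the basepoint) is a cofibration in that model structure. By Lemma \ref{lem: cofibrations of n-homog}, this happens if and only if $\ast \to X$ is a projective cofibration \emph{and} an $(n-1)$-polynomial equivalence.

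Next, I would unpack these two conditions separately. The first, that $\ast \to X$ is a projective cofibration, is by definition the statement that $X$ is projectively cofibrant. For the second, note that the polynomial approximation functor $T_{n-1}$ preserves the trivial functor (either because the homotopy limit defining $\tau_{n-1}$ is taken pointwise of a trivial diagram, or simply because $T_{n-1}$ is a functor of pointed categories). Hence $\ast \to X$ is an $(n-1)$-polynomial equivalence exactly when $T_{n-1}X$ is weakly equivalent to $\ast$ at every level, which is precisely the definition of $X$ being $n$-reduced.

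Combining these two equivalences gives the statement of the corollary. There is no real obstacle here: the corollary is essentially a restatement of Lemma \ref{lem: cofibrations of n-homog} for the particular map $\ast \to X$, and the only point worth explicitly noting in the writeup is the trivial observation that $T_{n-1}(\ast) \simeq \ast$ so that an $(n-1)$-polynomial equivalence out of $\ast$ detects $n$-reducedness.
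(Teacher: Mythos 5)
Your proof is correct and follows exactly the route one would expect: since the paper only cites \cite[Corollary 8.6]{Ta19} without giving details, the intended argument is indeed to specialise Lemma~\ref{lem: cofibrations of n-homog} to the map $\ast \to X$, identify ``$\ast \to X$ is a projective cofibration'' with projective cofibrancy, and identify ``$\ast \to X$ is an $(n-1)$-polynomial equivalence'' with $n$-reducedness via $T_{n-1}(\ast)\simeq \ast$. Nothing is missing, and the one observation you flag (that $T_{n-1}$ preserves the point) is exactly the right thing to spell out.
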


\subsection{The intermediate categories} In \cite{We95}, Weiss constructs a zig-zag of equivalences between the category of $n$-homogeneous functors (up to homotopy) and the homotopy category of spectra with an action of $\O(n)$. In \cite{BO13}, Barnes and Oman put this zig-zag into a model category theoretic framework via a zig-zag of Quillen equivalences between the $n$-homogeneous model structure on $\E_0^\mathbf{O}$, and spectra with an action of $\O(n)$. This zig-zag moves through an intermediate category, denote $\O(n)\E_n^\mathbf{O}$. In \cite{Ta19}, the author constructs a similar zig-zag of Quillen equivalences between the unitary $n$-homogeneous model structure and spectra with an action of $\U(n)$. We give an overview of the construction of these intermediate categories and how they relate to spectra and the $n$-homogenous model structure. 

Sitting over the space of linear isometries $\J(U,V)$ the the $n$-th complement vector bundle, with total space 
\[
\gamma_n(U,V) = \{ (f,x) \ : \ f \in \J(U,V), x \in \F^n \otimes_\F f(U)^\perp\}
\]
where we have identified the cokernel of $f$ with $f(U)^\perp$, the orthogonal complement of $f(U)$ in $V$. 

\begin{definition}
Define $\J_n$ to be the category with the same objects as $\J$ and morphism space $\J_n(U,V)$ given by the Thom space of the vector bundle $\gamma_n(U,V)$. 
\end{definition}

With this, we may define the intermediate categories. 

\begin{definition}
Define $\E_n$ to be the category of $\T$-enriched functors from $\J_n$ to $\T$, and define the $n$-th intermediate category $\Aut(n)\E_n$ to be the category of $\Aut(n)\T$-enriched functors from $\J_n$ to $\Aut(n)\T$. 
\end{definition}

Let $n\mathbb{S}$ be the functor given by $V \longmapsto S^{nV}$ where $nV := \F^n \otimes_\F V$. By \cite[Proposition 7.4]{BO13} and \cite[Proposition 4.2]{Ta19} the intermediate categories are equivalent to a category of $n\mathbb{S}$-modules and hence come equipped with an $n$-stable model structure similar to the stable model structure on spectra. The weak equivalences of the $n$-stable model structure are given by $n\pi_*$-isomorphisms. Theses are defined via the structure maps of objects in $\Aut(n)\E_n$, and as such have slightly different forms depending on whether one is in the orthogonal or unitary setting. 

For $X \in \O(n)\E_n^\mathbf{O}$,
\[
n\pi_k(X) = \underset{q}{\colim}~ \pi_{k+q}X(\R^q),
\]
and for $Y \in \U(n)\E_n^\mathbf{U}$,
\[
n\pi_k(Y) = \underset{q}{\colim}~ \pi_{k+2q}Y(\C^q).
\]

\begin{prop}[{\cite[Proposition 7.4]{BO13}, \cite[Proposition 5.6]{Ta19}}]\label{prop: n-stable model structure}
There is a cofibrantly generated, proper, topological model structure on the category $\Aut(n)\E_n$, where the weak equivalences are the $n\pi_*$-isomorphisms, the cofibrations are those maps with the left lifting property with respect to all levelwise acyclic fibrations and the fibrations are those levelwise fibrations $f\colon X \to Y$ such that the diagram
\[
\xymatrix{
X(V) \ar[r] \ar[d] & \Omega^{nW}X(V \oplus W) \ar[d] \\
Y(V) \ar[r] & \Omega^{nW}Y(V \oplus W).
}
\]
is a homotopy pullback square for all $V,W \in \J_n$. 
\end{prop}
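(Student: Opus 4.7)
The plan is to construct the $n$-stable model structure as a left Bousfield localization of a levelwise/projective model structure on $\Aut(n)\E_n$, following the strategy of \cite{MMSS01} for diagram spectra.

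\textbf{Step 1: the level model structure.} First, I would establish a projective (levelwise) model structure on $\Aut(n)\E_n$ with weak equivalences and fibrations defined levelwise in $\Aut(n)\T$. Since $\Aut(n)\E_n$ is a category of enriched diagrams of spaces, this is standard: it is cofibrantly generated with generating (acyclic) cofibrations of the form $\J_n(V,-) \wedge i$, where $i$ runs through generating (acyclic) cofibrations of $\Aut(n)\T$. Properness and topological enrichment are immediate from the corresponding properties of $\T$ and the fact that limits and colimits in $\Aut(n)\E_n$ are computed levelwise.

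\textbf{Step 2: identification with $n\mathbb{S}$-modules.} Using the identification of $\Aut(n)\E_n$ with modules over the sphere $n\mathbb{S}\colon V \mapsto S^{nV}$ (as in \cite[Proposition 7.4]{BO13} and \cite[Proposition 4.2]{Ta19}), the morphism spaces $\J_n(V,W)$ encode multiplication by $S^{nW}$. This supplies, for each pair $V,W$, a canonical adjoint structure map $\sigma_{V,W}^X\colon X(V) \to \Omega^{nW} X(V \oplus W)$ natural in $X$, which will be used to define the localization.

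\textbf{Step 3: left Bousfield localization.} The $n$-stable model structure should be obtained as a left Bousfield localization of the level model structure at the set $S$ of maps of the form
\[
\lambda_{V,W}\colon \J_n(V \oplus W, -) \wedge S^{nW} \longrightarrow \J_n(V,-),
\]
detecting those objects for which the structure maps $\sigma_{V,W}^X$ are weak equivalences. Since the level structure is cofibrantly generated, proper, and topological, and since $\Aut(n)\E_n$ is a locally presentable category of topologically enriched diagrams, the existence of this Bousfield localization follows from standard machinery (for instance, Hirschhorn's localization theorem or its topological variant). This produces a cofibrantly generated, left proper, topological model structure whose cofibrations coincide with those of the level structure and whose fibrant objects are precisely the level-fibrant $X$ for which every $\sigma_{V,W}^X$ is a weak equivalence (the ``$\Omega$-spectrum-like'' objects). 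The fibration characterisation via the homotopy pullback square then drops out in the usual way: a levelwise fibration $f\colon X \to Y$ is a fibration in the localized structure iff the comparison of structure maps is a homotopy pullback for all $V,W$.

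\textbf{Step 4: identification of weak equivalences, and right properness.} The main obstacle is identifying the $S$-local weak equivalences with the $n\pi_*$-isomorphisms, and then deducing right properness. For this I would construct an explicit fibrant replacement functor $X \mapsto X^f$ as a transfinite composite of the shift-loop operations $X(V) \mapsto \Omega^{nW}X(V \oplus W)$, mirroring the construction of $Q X$ for orthogonal/unitary spectra. Then $X \to X^f$ is both a levelwise equivalence of the colimit system defining $n\pi_*$ and induces an iso on $n\pi_*$, and a map between $\Omega$-spectra is a level equivalence iff it is an $n\pi_*$-isomorphism. From here, a standard argument shows that the class of $S$-local equivalences is exactly the class of $n\pi_*$-isomorphisms. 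Right properness then follows because pullback along a fibration (in the localised sense) preserves the homotopy-pullback squares defining $n\pi_*$, so $n\pi_*$-isomorphisms are preserved under such pullbacks; properness of the level structure handles the remaining checks. The technical heart of the argument is the interaction between the colimit defining $n\pi_*$ and the transfinite fibrant replacement, and this is where the $n$-shifting (as opposed to the single suspension in ordinary spectra) requires the most care.
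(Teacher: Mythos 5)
The paper does not prove this proposition itself; it cites it from Barnes--Oman (Proposition 7.4 of \cite{BO13}) and Taggart (Proposition 5.6 of \cite{Ta19}), both of which establish the $n$-stable model structure by following the template of \cite{MMSS01}: identify $\Aut(n)\E_n$ with modules over the commutative monoid $n\mathbb{S}$ in a diagram category of spectra, then invoke (or reconstruct) the stable model structure on such diagram spectra. Your four-step outline is morally the same route. Step 2 is precisely the move made in the cited papers, and your Steps 3 and 4 unpack what is needed to run the MMSS argument, so the overall strategy is sound and faithful to the sources.

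Two points deserve repair. First, $\Aut(n)\E_n$ is \emph{not} locally presentable: $\T$ is the category of based compactly generated weak Hausdorff spaces, which fails to be locally presentable, so categories of $\T$-enriched diagrams inherit that failure. The correct hypothesis that makes Hirschhorn's localization machinery apply is \emph{cellularity} of the level model structure (together with left properness), which does hold here and which is exactly the property the paper records for the projective structure on $\E_0$. Replace ``locally presentable'' with ``cellular'' and the existence of the localization is justified. Second, your treatment of right properness is circular as written: asserting that pullback along a stable fibration preserves $n\pi_*$-isomorphisms is precisely the statement of right properness, not an argument for it. Left Bousfield localization preserves left properness but not, in general, right properness; the latter requires a separate argument, as in \cite{MMSS01}, using the fact that stable fibrations are in particular levelwise fibrations and that $n\pi_*$-isomorphisms are stable under pullback along levelwise fibrations (proved via the long exact sequence of the fibre and the exactness of the sequential colimit defining $n\pi_*$). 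This is a standard but non-automatic check, and your Step 4 should carry it out rather than assume it.
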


The fibrant objects of the $n$-stable model structure are called $n\Omega$-spectra and have the property that 
\[
X(V) \longrightarrow \Omega^{nW} X(V \oplus W)
\]
is a levelwise weak equivalence. This property can clearly be deduced from the above diagram by considering the map $X \to \ast$. 

To give the Quillen equivalence between these intermediate categories and spectra with an action of $\Aut(n)$ we now consider the calculi separately. The constructions are similar for both calculi but it is convenient to have different notation for the functors involved. We start with the unitary case.  Define $\alpha_n \colon \J_n^\mathbf{U} \to \J_1^\mathbf{U}$ to be the functor given on objects by $\alpha_n(V) = \C^n \otimes_\C V$, and given on morphisms by $\alpha_n(f,x) = (\C^n \otimes_\C f, x)$. This defines a $\T$--enriched functor, for full details see \cite[Proposition 6.7]{Ta19}.

\begin{prop}[{\cite[Theorem 5.8]{Ta19}}]
There is a series of Quillen equivalences 
\[
\xymatrix@C+1cm{
\U(n)\E_n^\mathbf{U} \ar@<1ex>[r]^{(\alpha_n)_!} & \s^\mathbf{U}[\U(n)] \ar@<1ex>[l]^{(\alpha_n)^*} \ar@<1ex>[r]^{r_!} & \s^\mathbf{O}[\U(n)] \ar@<1ex>[l]^{r^*}\\
}
\]
with $(\alpha_n)^* \Theta (V) = \Theta(\C^n \otimes_\C V)$, and $(\alpha_n)_!$ is the left Kan extension along $\alpha_n$. 
\end{prop}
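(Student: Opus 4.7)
The plan is to split the statement into two independent Quillen equivalences and verify each separately: first $(\alpha_n)_! \dashv (\alpha_n)^*$ between $\U(n)\E_n^\mathbf{U}$ and $\s^\mathbf{U}[\U(n)]$, then $r_! \dashv r^*$ between $\s^\mathbf{U}[\U(n)]$ and $\s^\mathbf{O}[\U(n)]$.

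For the second adjunction, I would reduce to the non-equivariant Quillen equivalence between unitary and orthogonal spectra \cite[Theorem 6.4]{Ta19}. The functors $r_!$ and $r^*$ are defined by left Kan extension and precomposition respectively, and both commute with the free/forgetful adjunctions against $\U(n)$. Since the model structures on spectra with a $\U(n)$-action are created levelwise from the underlying stable model structures, the weak equivalences and fibrations on either side coincide with those of the underlying spectra. Consequently, the non-equivariant Quillen equivalence lifts, and the only work is to check that the $\U(n)$-action on the transferred functors is the expected one, which is immediate from naturality.

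For the first adjunction, Step 1 is to establish that $((\alpha_n)_!, (\alpha_n)^*)$ is a Quillen pair by showing $(\alpha_n)^*$ preserves fibrations and acyclic fibrations. The key computation is
\[
n\pi_k((\alpha_n)^*\Theta) \;=\; \underset{q}{\colim}\, \pi_{k+2q}\Theta(\C^{nq}),
\]
which, by cofinality of the subsequence $\{\C^{nq}\}$ in $\{\C^q\}$, agrees with $\pi_k^\mathbf{U}(\Theta)$. Hence $(\alpha_n)^*$ sends stable equivalences of unitary spectra to $n\pi_*$-isomorphisms; the homotopy pullback condition characterising fibrations in Proposition \ref{prop: n-stable model structure} translates directly since $(\alpha_n)^*$ is defined by evaluation. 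Step 2 is the Quillen equivalence itself. It suffices, by a cell-induction argument against generating cofibrations of the form $\J_n^\mathbf{U}(V,-)\wedge \U(n)_+ \wedge i$, to check the derived unit on representables $\J_n^\mathbf{U}(V,-)$. The left Kan extension identifies $(\alpha_n)_!(\J_n^\mathbf{U}(V,-))$ with $\J_1^\mathbf{U}(\C^n\otimes V,-)$, and the unit $\J_n^\mathbf{U}(V,W) \to \J_1^\mathbf{U}(\C^n\otimes V, \C^n\otimes W)$ is in fact a homeomorphism by inspection of the Thom-space definitions of the morphism spaces. For the derived counit at a fibrant object $\Theta$, the comparison map becomes a stable equivalence again by the cofinality argument above.

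The main obstacle is Step 1 for the first adjunction, namely carefully transferring the fibration condition of the $n$-stable model structure on $\U(n)\E_n^\mathbf{U}$, which requires the homotopy pullback of the $\Omega^{nW}$-structure maps, to the corresponding condition for stable fibrations of unitary spectra. This requires verifying that the iterated loop functor $\Omega^{nW}$ in the $n$-stable world matches $\Omega^{\C^n \otimes W}$ under evaluation at $\C^n\otimes V$, a compatibility that ultimately rests on the identification $n W \cong \C^n\otimes_\C W$ as real inner product spaces and on the cofinality of the complex subsequence inside $\J_1^\mathbf{U}$. Once this identification is secured, the remaining arguments are essentially formal manipulations of Kan extensions and model-categorical lifting.
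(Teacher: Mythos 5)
The paper does not supply its own proof here: the proposition is cited directly from \cite[Theorem~5.8]{Ta19}, so your attempt is being compared to the external argument the paper invokes rather than to anything internal. That said, your decomposition into the two constituent adjunctions is exactly the right framing, and your treatment of $r_!\dashv r^*$ — lift the non-equivariant Quillen equivalence $\s^\mathbf{U}\rightleftarrows\s^\mathbf{O}$ of \cite[Theorem~6.4]{Ta19} to $\U(n)$-objects by observing that the equivariant (levelwise, non-fixed-point) model structures are created from the underlying ones — is the same mechanism the paper appeals to later in Section~7.

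The genuine gap is in your Step~2 for $(\alpha_n)_!\dashv(\alpha_n)^*$. You assert that the unit on representables, $\J_n^\mathbf{U}(V,W)\to\J_1^\mathbf{U}(\C^n\otimes V,\C^n\otimes W)$, is a homeomorphism by inspection of Thom spaces. This is false for $n\geq 2$: the fibers of the two bundles do have the same dimension, but $\alpha_n$ sends the Stiefel manifold $\J(V,W)$ into $\J(\C^n\otimes V,\C^n\otimes W)$ by $f\mapsto\C^n\otimes f$, and this is far from surjective (for instance $\J(\C,\C^2)\cong S^3$ is sent into $\J(\C^2,\C^4)$, a $12$-dimensional Stiefel manifold, when $n=2$). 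So the unit is not even a levelwise weak equivalence, and the cell-induction argument you propose collapses at its base case. The correct statement is that the \emph{derived} unit is an $n\pi_*$-isomorphism, and the argument in \cite{Ta19} (following \cite[\S8]{BO13}) does not proceed pointwise on representables: it uses the identification of $\Aut(n)\E_n$ with modules over the commutative monoid $n\mathbb{S}$ and the ensuing comparison of module categories. Your cofinality observation about $\{\C^{nq}\}$ in $\{\C^q\}$ is the right ingredient for showing $(\alpha_n)^*$ detects and preserves weak equivalences, and hence for the derived counit half of the Quillen equivalence; what you are missing is a valid argument for the derived unit, and the appeal to a homeomorphism cannot be repaired as stated.

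One small notational point: in the formula $n\pi_k((\alpha_n)^*\Theta)=\colim_q\pi_{k+2q}\Theta(\C^{nq})$ the exponent should be $k+2nq$ to track the $n$-fold structure maps $S^{2n}\wedge Y(\C^q)\to Y(\C^{q+1})$; only then does cofinality give $\pi_k^\mathbf{U}(\Theta)$. You appear to have inherited this from the displayed formula in Section~2 of the paper, which has the same slip.
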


The orthogonal case is similar, full details may be found in \cite[\S 8]{BO13}.

\begin{prop}[{\cite[Proposition 8.3]{BO13}}]
There is a Quillen equivalence
\[
\adjunction{(\beta_n)_!}{\O(n)\E_n^\mathbf{O}}{\s^\mathbf{O}[\O(n)]}{(\beta_n)^*}
\]
with $(\beta_n)^* \Theta (V) = \Theta(\R^n \otimes_\R V)$, and $(\beta_n)_!$ is the left Kan extension along $\beta_n$. 
\end{prop}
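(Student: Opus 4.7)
The plan is to mirror the construction used in the unitary case: define $\beta_n\colon \J_n^\mathbf{O} \to \J_1^\mathbf{O}$ as a $\T$-enriched functor, obtain the adjunction via enriched Kan extension, and then verify that it is a Quillen equivalence by comparing homotopy groups. The functor $\beta_n$ is defined on objects by $\beta_n(V) = \R^n \otimes_\R V$ and on Thom-space morphisms by sending $(f, x) \in \J_n^\mathbf{O}(U,V)$, with $x \in \R^n \otimes_\R f(U)^\perp$, to $(\R^n \otimes_\R f, x) \in \J_1^\mathbf{O}(\R^n \otimes_\R U, \R^n \otimes_\R V)$ under the canonical identification $\R^n \otimes_\R f(U)^\perp \cong (\R^n \otimes_\R f)(\R^n \otimes_\R U)^\perp$. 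The natural $\O(n)$-action on the $\R^n$-factor combines with the $\O(n)$-action on a target spectrum to give $(\beta_n)^*\Theta$ the structure of an object of $\O(n)\E_n^\mathbf{O}$, so the induced adjunction lands in $\s^\mathbf{O}[\O(n)]$ rather than just $\s^\mathbf{O}$.

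For the Quillen adjunction, $(\beta_n)_! \dashv (\beta_n)^*$ is formal from enriched Kan extension theory, and $(\beta_n)^*$ visibly preserves levelwise fibrations and levelwise weak equivalences, so it is right Quillen between projective model structures. I would descend to the stable and $n$-stable structures by checking that $(\beta_n)^*$ sends $\Omega$-spectra to $n\Omega$-spectra: for an $\Omega$-spectrum $\Theta$ and $V, W \in \J^\mathbf{O}$, the map
\[
(\beta_n)^*\Theta(V) = \Theta(\R^n \otimes_\R V) \longrightarrow \Omega^{\R^n \otimes_\R W}\Theta(\R^n \otimes_\R (V \oplus W)) = \Omega^{nW}(\beta_n)^*\Theta(V \oplus W)
\]
is a weak equivalence by hypothesis on $\Theta$. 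Together with the characterisation of fibrations in Proposition \ref{prop: n-stable model structure}, this shows that $(\beta_n)^*$ preserves fibrations between fibrant objects, so the adjunction is Quillen with respect to the localised model structures.

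To upgrade to a Quillen equivalence, I would check that the derived unit is a weak equivalence on the generating cofibrations $\J_n^\mathbf{O}(V, -) \wedge i$ of $\O(n)\E_n^\mathbf{O}$. On such representables, $(\beta_n)_!$ admits an explicit description as a representable orthogonal spectrum, and the derived unit reduces to a comparison of $n\pi_*(\J_n^\mathbf{O}(V,-))$ with the stable homotopy groups of the left Kan extension. The crucial point, and the main obstacle, is a cofinality argument: $n\pi_*$ is computed as a colimit over inner product spaces of the form $\R^n \otimes_\R W$, whereas the usual stable homotopy groups of an orthogonal spectrum involve a colimit over all of $\J^\mathbf{O}$. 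One must show that the full subcategory of $\J^\mathbf{O}$ on objects of the form $\R^n \otimes_\R W$ is homotopy cofinal for the stabilising colimit, despite the fact that $n\pi_*$ inverts suspension only by $S^{\R^n}$ rather than by every sphere. The argument ultimately relies on filtering an arbitrary $V \in \J^\mathbf{O}$ by subspaces of the form $\R^n \otimes_\R W$ and using connectivity estimates on the Stiefel manifolds parameterising the Thom spaces of $\gamma_n$, after which the derived unit becomes an isomorphism on homotopy groups and the equivalence follows from Proposition \ref{prop: n-stable model structure}.
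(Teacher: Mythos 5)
The paper does not actually prove this statement: it is imported verbatim from \cite[Proposition 8.3]{BO13}, with the surrounding text simply noting that ``full details may be found in \cite[\S 8]{BO13}.'' So there is no in-paper proof to compare against, and I am assessing your sketch on its own terms.

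Your outline captures the right high-level strategy: define $\beta_n$ on objects by $V \mapsto \R^n\otimes_\R V$ and on Thom-space morphisms by $(f,x)\mapsto(\R^n\otimes f, x)$, obtain the adjunction by enriched Kan extension, use the fibration characterisation of Proposition \ref{prop: n-stable model structure} to see that $(\beta_n)^*$ is right Quillen and sends $\Omega$-spectra to $n\Omega$-spectra, and then reduce the Quillen-equivalence check to the derived unit on representables. That much is correct, and Stiefel-manifold connectivity is indeed the analytic content in the final step. However, you locate the difficulty in the wrong place. Comparing $n\pi_k\bigl((\beta_n)^*\Theta\bigr) = \colim_q \pi_{k+nq}\Theta(\R^{nq})$ with $\pi_k(\Theta)=\colim_m\pi_{k+m}\Theta(\R^m)$ is \emph{not} a subtle homotopy-cofinality problem: the subposet $n\mathbb{N}\subset\mathbb{N}$ indexing the $\R^{nq}$ is trivially cofinal, so these colimits agree on the nose. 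This already gives that $(\beta_n)^*$ both preserves and \emph{detects} stable equivalences, which is the key leverage for the Quillen equivalence. The connectivity estimates enter elsewhere: in checking that the unit $\J_n^\mathbf{O}(V,-)\to(\beta_n)^*\J_1^\mathbf{O}(nV,-)$, i.e.\ the map $\J_n^\mathbf{O}(V,W)\to\J_1^\mathbf{O}(nV,nW)$ of Thom spaces lying over the map of Stiefel manifolds $\J_0^\mathbf{O}(V,W)\to\J_0^\mathbf{O}(nV,nW)$, $f\mapsto nf$, becomes a $\pi_*$-isomorphism after stabilisation. The connectivity of the base map grows linearly in $\dim W$ at a rate outpacing the suspension degree, which is what makes the unit a stable equivalence. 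Your suggestion of ``filtering an arbitrary $V\in\J^\mathbf{O}$ by subspaces of the form $\R^n\otimes_\R W$'' is not a well-posed step, since a general $V$ admits no such filtration; the correct reduction is the trivial cofinality on the indexing poset, with the connectivity spent on the unit map rather than on cofinality.
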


\subsection{The derivatives of a functor} We now move on to discussing the derivatives of a functor. The derivatives are naturally objects in $\Aut(n)\E_n$. Their definition comes from constructing an adjunction between $\E_0$ and $\Aut(n)\E_n$. The inclusion $\F^m \to \F^n$ onto the first $m$-coordinates induces a functor $i_m^n \colon \J_m \to \J_n$. 

\begin{definition}
Define the \textit{restriction functor} $\res_0^n \colon \E_n \to \E_m$ to be precomposition with $i_m^n$, and define the \textit{induction functor} $\ind_m^n \colon \E_m \to \E_n$ to be the right Kan extension along $i_m^n$. In the case $m=0$, the induction functor $\ind_0^n$ is called the \textit{$n$-th derivative}. 
\end{definition}

Combining this adjunction with a change of group action from \cite{MM02} provides an adjunction
\[
\adjunction{\res_0^n/\Aut(n)}{\Aut(n)\E_n}{\E_0}{\ind_0^n \varepsilon^*}.
\]

This adjunction is a Quillen equivalence between the $n$-homogeneous model structure on $\E_0$ and the $n$-stable model structure on $\Aut(n)\E_n$. We will refer to the right adjoint as inflation-induction. 

\begin{prop}[{\cite[Theorem 10.1]{BO13}, \cite[Theorem 6.5]{Ta19}}]
The adjoint pair
\[
\adjunction{\res_0^n/\Aut(n)}{\Aut(n)\E_n}{n\homog\E_0}{\ind_0^n\varepsilon^*}
\]
is a Quillen equivalence.
\end{prop}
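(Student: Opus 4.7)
The plan is to verify that the stated pair is a Quillen adjunction and then upgrade it to a Quillen equivalence by passing through the intermediate-category model of homogeneous functors, which by the immediately preceding propositions is itself Quillen equivalent to spectra with an $\Aut(n)$-action.

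First I would decompose the adjunction into its two constituent pieces. The restriction functor $\res_0^n \colon \E_n \to \E_0$ is precomposition with the inclusion $i_0^n \colon \J_0 \hookrightarrow \J_n$, and its right adjoint $\ind_0^n$ is the right Kan extension along $i_0^n$. Separately, the change-of-group adjunction $(-)/\Aut(n) \dashv \varepsilon^*$ of \cite{MM02} relates $\Aut(n)\E_n$ and $\E_n$ by taking orbits and restricting along the unique map $\Aut(n) \to 1$. Composing produces the stated adjunction, so the adjointness is formal.

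Next I would check that the composite is a Quillen adjunction by showing the right adjoint preserves fibrations and acyclic fibrations. Using that fibrant objects of the $n$-stable model structure are $n\Omega$-spectra, so $X(V) \xrightarrow{\sim} \Omega^{nW} X(V \oplus W)$, the right Kan extension formula expresses $(\ind_0^n \varepsilon^* X)(V)$ as a homotopy limit over subspaces $U \subseteq \F^N$ of values of $X$; unfolding the limit and comparing to the definition of $\tau_n$ yields that $\ind_0^n \varepsilon^* X$ is $n$-polynomial. Combined with the characterisation of fibrations in $n\poly\E_0$ as levelwise fibrations that are homotopy pullbacks over $T_n$, this delivers preservation of fibrations. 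Preservation of acyclic fibrations follows from Proposition \ref{characterisation of acyclic fibs} together with the fact that $n\pi_*$-isomorphisms between $n\Omega$-spectra correspond, after applying the right Kan extension, to $D_n$-equivalences between $n$-polynomial functors.

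The principal obstacle is verifying that the derived unit and counit are weak equivalences. Rather than attacking this directly, I would factor the comparison through the Quillen equivalences stated just before this theorem: the chain $\Aut(n)\E_n \simeq \s^\mathbf{U}[\Aut(n)] \simeq \s^\mathbf{O}[\Aut(n)]$ identifies the source with spectra carrying an $\Aut(n)$-action. Under this identification, $\ind_0^n \varepsilon^*$ becomes the classical assignment sending a spectrum $\Theta$ to the functor $V \longmapsto \Omega^\infty (\Theta \wedge S^{nV})_{h\Aut(n)}$ (in the reduced, fibrant setting), and the statement becomes Weiss's classification of $n$-homogeneous functors. The model-categorical upgrade then amounts to checking compatibility of derived functors: for a cofibrant spectrum $\Theta$ one confirms that the derived unit $\Theta \to \ind_0^n \varepsilon^* ((\res_0^n \Theta)/\Aut(n))$ is an $n\pi_*$-isomorphism by a direct colimit computation exploiting cofibrancy, while for a fibrant $n$-homogeneous $F$ the derived counit is a $D_n$-equivalence because both sides are $n$-homogeneous functors classified by the same spectrum. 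Finally, one invokes the two-out-of-three property for the composite adjunction against the known Quillen equivalences to the spectra categories to conclude.
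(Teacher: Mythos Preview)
The paper does not prove this proposition; it is quoted as background from \cite[Theorem 10.1]{BO13} and \cite[Theorem 6.5]{Ta19}, so there is no in-paper argument to compare against. Your outline has roughly the same shape as those references --- establish the Quillen adjunction, then verify the derived unit and counit using Weiss's classification --- but several steps are garbled in ways that matter.

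You repeatedly reverse the direction of $\ind_0^n\varepsilon^*$. It is the right adjoint going from $n\homog\E_0$ into $\Aut(n)\E_n$: it takes a functor $F$ and returns its $n$-th derivative object. So the sentence ``$\ind_0^n\varepsilon^*$ becomes the classical assignment sending a spectrum $\Theta$ to the functor $V \mapsto \Omega^\infty(\Theta \wedge S^{nV})_{h\Aut(n)}$'' is backwards; that infinite loop space formula arises from the derived \emph{left} adjoint $\res_0^n/\Aut(n)$ applied to (a cofibrant replacement of) $(\beta_n)^*\Theta$. Likewise, in the Quillen-adjunction paragraph you conclude that ``$\ind_0^n\varepsilon^* X$ is $n$-polynomial'', but $\ind_0^n\varepsilon^* X$ lives in $\Aut(n)\E_n$, where the relevant fibrancy condition is being an $n\Omega$-spectrum, not being $n$-polynomial. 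What you actually need is that $\ind_0^n\varepsilon^*$ carries $n$-polynomial fibrations to $n$-stable fibrations, and the input for that is Weiss's identification of $\res_0^{n}\ind_0^n$ with a homotopy fibre involving $\tau_n$, not a direct comparison of $\ind_0^n\varepsilon^* X$ with $\tau_n$.

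Finally, the closing two-out-of-three step is circular as written. The only Quillen equivalences ``to the spectra categories'' available at this point are those between $\Aut(n)\E_n$ and $\s[\Aut(n)]$; the equivalence between $n\homog\E_0$ and $\s[\Aut(n)]$ is precisely the composite of the statement you are trying to prove with those. You cannot therefore deduce the present proposition by two-out-of-three. In the cited sources the substantive work is a direct analysis of the derived unit and counit using Weiss's results on $\ind_0^n$ and the classification of $n$-homogeneous functors; there is no shortcut around that.
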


\subsection{Classification of $n$-homogeneous functors}
For a functor $F \in \E_0^\mathbf{U}$, inflation-induction and the left adjoint to $(\alpha_n \circ r)^*$ determine a spectrum $\Psi_F^n$ with an action of $\U(n)$. That is, $\Psi_F^n = (\alpha_n \circ r)_!\ind_0^n\varepsilon^*F$. Moreover, for $F \in \E_0^\mathbf{O}$, inflation-induction and the left adjoint to $(\beta_n)^*$ defines a spectrum with an action of $\O(n)$, which we again denote by $\Psi_F^n$.

\begin{prop}[{\cite[Theorem 7.3]{We95},\cite[Theorem 7.1]{Ta19}}]\label{homog characterisation}
Let $F \in \E_0$ be $n$-homogeneous for some $n >0$. Then $F$ is levelwise weakly equivalent to the functor defined as 
\begin{equation*}\label{char of homog functors}
U \longmapsto \Omega^\infty [(S^{nU} \wedge \Psi_F^n)_{h\Aut(n)}].
\end{equation*}
\end{prop}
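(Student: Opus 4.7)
The plan is to use the chain of Quillen equivalences
\[
n\homog\E_0\;\simeq\;\Aut(n)\E_n\;\simeq\;\s[\Aut(n)]
\]
established in the preceding propositions, and then to unwind the composite derived equivalence applied to $\Psi_F^n$. By construction, $\Psi_F^n$ is the image of $F$ under the composite of the right adjoint $\ind_0^n\varepsilon^*$ followed by the left adjoint $(\alpha_n\circ r)_!$ (respectively $(\beta_n)_!$ in the orthogonal case). Since $F$ is $n$-homogeneous, it is fibrant in $n\homog\E_0$, so the inverse equivalence---namely the right adjoint $(\alpha_n\circ r)^*$ (respectively $(\beta_n)^*$) followed by the derived left adjoint $\res_0^n/\Aut(n)$---recovers $F$ up to levelwise weak equivalence when applied to $\Psi_F^n$.

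First I unpack the intermediate step. The right adjoint $(\alpha_n\circ r)^*$ or $(\beta_n)^*$ applied to a fibrant $\Aut(n)$-spectrum $\Psi$ produces the object of $\Aut(n)\E_n$ given by $V \longmapsto \Psi(\F^n\otimes V)$, carrying the induced $\Aut(n)$-action. Next, applying $\res_0^n/\Aut(n)$ to a cofibrant replacement of this object gives, at each $V$, the homotopy orbits $\Psi(\F^n\otimes V)_{h\Aut(n)}$, since cofibrancy in $\Aut(n)\E_n$ ensures that the $\Aut(n)$-action is sufficiently free for strict orbits to model homotopy orbits. Combining these two steps yields a levelwise weak equivalence
\[
F(V)\;\simeq\;\Psi_F^n(\F^n\otimes V)_{h\Aut(n)}.
\]

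To match the form stated in the proposition I invoke the standard identification for a fibrant $\Omega$-spectrum $\Psi$, namely $\Psi(W)\simeq\Omega^\infty(S^W\wedge\Psi)$, applied with $W=\F^n\otimes V=nV$. Commuting $\Omega^\infty$ past the $h\Aut(n)$-construction---which is valid on cofibrant models because $\Aut(n)$ is a compact Lie group---produces the desired expression
\[
F(V)\;\simeq\;\Omega^\infty\bigl[(S^{nV}\wedge\Psi_F^n)_{h\Aut(n)}\bigr].
\]

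The principal obstacle is the careful bookkeeping of cofibrant and fibrant replacements needed so that the strict orbits appearing in $\res_0^n/\Aut(n)$ faithfully model the derived homotopy orbits, and so that the commutation of $\Omega^\infty$ with the orbit construction is justified. In the orthogonal case the argument specialises to that of \cite[Theorem 7.3]{We95}; in the unitary case the additional Quillen equivalence $r_!\dashv r^*$ between real and complex spectra must be threaded through, as in the analysis of \cite[\S 7]{Ta19}.
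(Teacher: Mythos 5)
The paper itself does not prove this proposition: it is quoted verbatim from \cite[Theorem 7.3]{We95} in the orthogonal case and \cite[Theorem 7.1]{Ta19} in the unitary case, so there is no ``paper proof'' to compare against beyond the citation. Your general strategy---unwinding the zig-zag of Quillen equivalences and tracking $\Psi_F^n$ back to $F$---is indeed the skeleton of the argument in those references.

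However, there is a genuine gap in the last step. You arrive at an identification $F(V)\simeq \Psi_F^n(\F^n\otimes V)_{h\Aut(n)}$, where the right-hand side is the \emph{space-level} homotopy orbits of the $(\F^n\otimes V)$-th level of the $\Omega$-spectrum $\Psi_F^n$, and then try to rewrite this as $\Omega^\infty\bigl[(S^{nV}\wedge\Psi_F^n)_{h\Aut(n)}\bigr]$ by ``commuting $\Omega^\infty$ past the $h\Aut(n)$-construction''. This commutation is false, and compactness of $\Aut(n)$ together with cofibrancy does not rescue it: $\Omega^\infty$ is a right adjoint and homotopy orbits is a homotopy colimit, and they do not commute. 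A concrete counterexample with $n=1$, $V=0$, $\Psi=\mathbb{S}$ with trivial $\Z/2$-action: the space-level orbits give $(\Omega^\infty\mathbb{S})_{h\Z/2}\simeq B\Z/2_+\wedge QS^0$, whereas the spectrum-level expression in the proposition gives $\Omega^\infty(\mathbb{S}_{h\Z/2})\simeq Q(B\Z/2_+)$, and these two spaces are not weakly equivalent (compare $\pi_1$, for instance). The proposition is unambiguously stating the spectrum-level formula, so your intermediate identification cannot be the actual output of the derived composite.

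The underlying issue is in the step ``applying $\res_0^n/\Aut(n)$ to a cofibrant replacement $\ldots$ gives, at each $V$, the homotopy orbits $\Psi(\F^n\otimes V)_{h\Aut(n)}$, since cofibrancy $\ldots$ ensures that the $\Aut(n)$-action is sufficiently free''. A cofibrant replacement $\widetilde X\to(\beta_n)^*\Psi$ in $\Aut(n)\E_n$ is indeed a levelwise weak equivalence, but the levels $\widetilde X(V)$ are built from cells of the form $\J_n(W,-)\wedge\Aut(n)_+\wedge(\text{disc})$ in which the Thom spaces $\J_n(W,V)$ appear; the strict quotient $\widetilde X(V)/\Aut(n)$ is \emph{not} the Borel construction on the space $\Psi(\F^n\otimes V)$. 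It is only after assembling the full cell structure and the structure maps that the answer is recognised as the $V$-th value of $U\mapsto\Omega^\infty\bigl[(S^{nU}\wedge\Psi_F^n)_{h\Aut(n)}\bigr]$. This Thom-space bookkeeping is precisely the substantive content of Weiss's Section~7 (and the analogous argument in \cite[\S 7]{Ta19}), so it cannot be replaced by a formal commutation of $\Omega^\infty$ with orbits. To repair the proof you would need to carry out (or cite) that explicit cell-by-cell identification rather than asserting the commutation.
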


\subsection{Weak Polynomials} An important class of functors, introduced by the author in \cite{Ta19} are the weak polynomial functors. These functors have a good connectivity relationship with the polynomial approximations and result in a convergent Taylor tower. We give an overview of the theory here, noting that the proofs provided by the author in \cite[\S9]{Ta19} work in the orthogonal setting also.

\begin{definition}\label{unitary agreement}
A map $p\colon F \to G$ in $\E_0$ is \textit{an order $n$ agreement} if there is some $\rho \in \mathbb{N}$ and $b \in \Z$ such that $p_U\colon F(U) \to G(U)$ is $((n+1)\dim_\R(U)-b)$-connected for all $U \in \J_0$, satisfying $\dim_\F(U) \geq \rho$. We will say that \textit{$F$ agrees with $G$ to order $n$} if there is an order $n$ unitary agreement $p\colon F \to G$ between them.
\end{definition}

When two functors agree to a given order, their Taylor tower agree to a prescribed level. The first result in that direction is the unitary analogue of \cite[Lemma e.3]{We98}.

\begin{lem}[{\cite[Lemma e.3]{We98},\cite[Lemma 9.5]{Ta19}}]\label{connected argument}
let $p \colon G \to F$ be a map in $\E_0$. Suppose that there is $b \in \Z$ such that $p_U\colon G(U) \to F(U)$ is $((n+1)\dim_\R(U) - b)$-connected for all $U \in \J_0$ with $\dim_\F(U) \geq \rho$. Then
\[
\tau_n(p)_U \colon \tau_n (G(U)) \to \tau_n(F(U))
\] 
is $((n+1)\dim_\R(U) -b +1)$-connected for all $U \in \J_0$. 
\end{lem}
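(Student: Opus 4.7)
The plan is to evaluate $\tau_n(p)_U$ via the defining formula
\[
\tau_n F(U) = \underset{0 \neq V \subseteq \F^{n+1}}{\holim}\, F(V \oplus U),
\]
and to combine the connectivity hypothesis with the standard bound on connectivity of a map between homotopy limits over a finite-dimensional indexing diagram. This mirrors Weiss's orthogonal argument in \cite[Lemma e.3]{We98}, and only minor modifications are needed for the unitary setting.

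First I would observe that the indexing category, namely the poset of nonzero $\F$-subspaces of $\F^{n+1}$ ordered by inclusion, has nerve of dimension $n$, since the longest chain $V_1 \subsetneq V_2 \subsetneq \cdots \subsetneq V_{n+1}$ has length $n$. Standard homotopy-limit connectivity estimates then imply that if every entry of the map of diagrams $p_{V \oplus U}\colon G(V\oplus U) \to F(V\oplus U)$ is $k$-connected, then the induced map $\tau_n(p)_U$ is at least $(k-n)$-connected.

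Next I would bound the connectivity of each $p_{V \oplus U}$ using the hypothesis. For any nonzero $V \subseteq \F^{n+1}$ we have $\dim_\F V \geq 1$, and hence $\dim_\R(V \oplus U) \geq \dim_\R U + \dim_\R \F \geq \dim_\R U + 1$. Provided that $\dim_\F(V \oplus U) \geq \rho$ (which is automatic once $\dim_\F U \geq \rho - 1$), the hypothesis gives that $p_{V \oplus U}$ is at least $((n+1)(\dim_\R U + 1) - b) = ((n+1)\dim_\R U + (n+1) - b)$-connected. Substituting into the homotopy-limit estimate yields connectivity
\[
(n+1)\dim_\R U + (n+1) - b - n \;=\; (n+1)\dim_\R U - b + 1,
\]
which is the desired bound.

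The main obstacle is the book-keeping for those $U$ with small $\dim_\F U$, for which some terms $V \oplus U$ in the diagram fail the hypothesis $\dim_\F(V \oplus U) \geq \rho$. For such $U$, however, the target connectivity $(n+1)\dim_\R U - b + 1$ is small or non-positive, so the conclusion is either vacuous or can be absorbed by enlarging $b$ by a constant depending only on $\rho$ and $n$. This leaves the form of the statement intact and completes the argument.
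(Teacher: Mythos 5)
The paper does not prove this lemma; it records it as a citation to Weiss's erratum \cite[Lemma e.3]{We98} and to \cite[Lemma 9.5]{Ta19}, so there is no in-paper proof to compare against directly. Your sketch does reproduce the shape of the argument in those sources: express $\tau_n$ as the homotopy limit over nonzero subspaces of $\F^{n+1}$, gain connectivity because $\dim_\R(V\oplus U)\geq \dim_\R U+\dim_\R\F\geq\dim_\R U+1$ for $V\neq 0$, then lose $n$ from passing to the homotopy limit. The arithmetic is correct and is exactly what yields the stated bound, with the unitary case in fact giving a stronger estimate than claimed.

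The one place where you invoke something as ``standard'' that is really the whole content of the cited proof is the step ``if every entry is $k$-connected then the holim map is $(k-n)$-connected.'' The indexing poset of nonzero $\F$-linear subspaces of $\F^{n+1}$ is a \emph{topological} poset, not a finite discrete one, so the nerve is a simplicial space and the Bousfield--Kan style connectivity estimate does not apply off the shelf; one must either filter by dimension of the subspace (giving an iterated homotopy limit of length $n$ whose stages involve Stiefel-manifold bundles) or pass to an equivalent finite model. This is precisely where Weiss does real work, and your argument black-boxes it. Also, the quantifier handling at small $U$ is off: $b$ is fixed before the conclusion, so ``enlarge $b$'' is not available. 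The correct repair, implicit in \cite{We98}, is that the conclusion should be read for $\dim_\F U\geq\rho-1$ (where every $V\oplus U$ in the diagram has $\dim_\F\geq\rho$ and the hypothesis applies verbatim); the unconditional ``for all $U\in\J_0$'' in the paper's phrasing is a slight looseness inherited from the citation, and you are right to notice that something must be said there, but the fix is to restrict the range of $U$, not to change $b$.
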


Iterating this result, gives the following. 

\begin{lem}[{\cite[Lemma e.7]{We98},\cite[Lemma 9.6]{Ta19}}]\label{agreement gives agreeing polynomials}
If $p \colon F \to G$ is an order $n$ agreement, then $T_k F \to T_k G$ is a levelwise weak equivalence  for $k \leq n$.
\end{lem}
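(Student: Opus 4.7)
The plan is to apply the preceding lemma (Lemma \ref{connected argument}) iteratively and then pass to the homotopy colimit defining $T_k$. Fix $k \leq n$. Since $(n+1)\dim_\R(U) \geq (k+1)\dim_\R(U)$ for every $U$, the hypothesis that $p_U$ is $((n+1)\dim_\R(U) - b)$-connected for all $U$ with $\dim_\F(U) \geq \rho$ immediately implies that $p_U$ is $((k+1)\dim_\R(U) - b)$-connected for the same range of $U$. This reframes the order $n$ agreement as a connectivity estimate calibrated to the $k$-polynomial construction.

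Next I would apply Lemma \ref{connected argument} with $n$ replaced by $k$ to obtain that
\[
\tau_k(p)_U \colon \tau_k F(U) \longrightarrow \tau_k G(U)
\]
is $((k+1)\dim_\R(U) - b + 1)$-connected for \emph{all} $U \in \J_0$. The critical feature here is that after a single application of Lemma \ref{connected argument} the dimension restriction $\dim_\F(U) \geq \rho$ disappears, so the hypothesis is satisfied on all of $\J_0$. This lets me iterate freely: an induction on $j$ using Lemma \ref{connected argument} at each step yields that $\tau_k^j(p)_U$ is $((k+1)\dim_\R(U) - b + j)$-connected for every $U \in \J_0$ and every $j \geq 1$.

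Finally, recalling that $T_k F$ is defined as the homotopy colimit of the sequential diagram
\[
F \longrightarrow \tau_k F \longrightarrow \tau_k^2 F \longrightarrow \cdots,
\]
and that sequential homotopy colimits in $\T$ commute with homotopy groups, the map $T_k(p)_U$ is the colimit of the maps $\tau_k^j(p)_U$, whose connectivities tend to infinity with $j$. Hence $T_k(p)_U$ is a weak homotopy equivalence for every $U \in \J_0$, so $T_k(p) \colon T_k F \to T_k G$ is a levelwise weak equivalence, as required.

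The argument is mostly bookkeeping; the one point worth underlining, and the main obstacle to a naive proof, is the asymmetry in Lemma \ref{connected argument} between its hypothesis (valid only for $\dim_\F(U) \geq \rho$) and its conclusion (valid for all $U$). This asymmetry is precisely what makes the iteration legal, and it is why the $k \leq n$ range, rather than a restricted subrange, suffices.
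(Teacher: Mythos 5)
Your proof is correct and is essentially the argument the paper intends (it refers the reader to \cite[Lemma e.7]{We98} and \cite[Lemma 9.6]{Ta19} and describes it as "iterating" Lemma \ref{connected argument}). You correctly use the downward monotonicity of connectivity in $k$ to reduce from degree $n$ to degree $k$, iterate Lemma \ref{connected argument} to obtain unbounded connectivity of $\tau_k^j(p)_U$ for each fixed $U$, and conclude via compatibility of sequential homotopy colimits with homotopy groups.
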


Agreement with the $n$-polynomial approximation functor for all $n\geq 0$ gives convergence of the Taylor tower. 

\begin{lem}[{\cite[Lemma 9.10]{Ta19}}]
If for all $n\geq 0$, a unitary functor $F$ agrees with $T_nF$ to order $n$ then the Taylor tower associated to $F$ converges to $F(U)$ at $U$ with $\dim_\F(U) \geq \rho$.
\end{lem}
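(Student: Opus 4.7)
The plan is to show that the canonical map from $F(U)$ into the homotopy limit of its Taylor tower is a weak equivalence whenever $\dim_\F(U)$ is sufficiently large. First I would unpack the hypothesis: for each $n \geq 0$ there exist constants $\rho_n \in \mathbb{N}$ and $b_n \in \Z$ so that the unit map $(\eta_n)_U\colon F(U)\to T_nF(U)$ is $((n+1)\dim_\R(U)-b_n)$-connected for all $U$ with $\dim_\F(U)\geq \rho_n$. Choose $\rho$ large enough that for any $U$ with $\dim_\F(U)\geq \rho$ the quantity $c_n(U):=(n+1)\dim_\R(U)-b_n$ is defined (i.e.\ $\dim_\F(U)\geq\rho_n$ for all $n$ of interest) and tends to $\infty$ with $n$; this is the sense in which "$\rho$" is to be interpreted.

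Next, for such a fixed $U$, I would analyse the natural map
\[
F(U)\longrightarrow \holim_n T_nF(U).
\]
Since $(\eta_n)_U$ is $c_n(U)$-connected, for each fixed homotopy degree $k$ the induced map $\pi_k(F(U))\to \pi_k(T_nF(U))$ is an isomorphism once $n$ is large enough that $c_n(U)>k$. In particular, the tower $\{\pi_k(T_nF(U))\}_n$ is eventually constant (equal to $\pi_k(F(U))$), so its $\lim^1$ vanishes and the Milnor short exact sequence
\[
0\longrightarrow {\lim}^1_n\pi_{k+1}(T_nF(U))\longrightarrow \pi_k(\holim_n T_nF(U))\longrightarrow \lim_n\pi_k(T_nF(U))\longrightarrow 0
\]
collapses to the isomorphism $\pi_k(\holim_n T_nF(U))\cong \pi_k(F(U))$. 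Comparing with the maps $(\eta_n)_U$ shows this isomorphism is realised by the canonical map, so $F(U)\to\holim_n T_nF(U)$ is a weak equivalence, which is precisely the statement that the Taylor tower converges to $F(U)$.

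The main obstacle is the bookkeeping of the constants $\rho_n$ and $b_n$: the precise notion of "converges at $U$ with $\dim_\F(U)\geq \rho$" must be reconciled with the fact that the agreement hypothesis yields, a priori, a different $\rho_n$ at each stage. Once one fixes $U$ with $\dim_\F(U)$ large enough that $c_n(U)\to\infty$, the rest of the argument is the standard eventually-constant tower computation, so the content of the proof is really in extracting the connectivity growth from the order $n$ agreements provided by the assumption.
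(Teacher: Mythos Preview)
The paper does not actually supply a proof of this lemma: it is quoted verbatim from \cite[Lemma 9.10]{Ta19} as background in Section~\ref{section: the calculi}, with no argument given. So there is nothing in the present paper to compare your proposal against directly.

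That said, your argument is the standard one and is correct in outline. The only real content, as you identify, is the bookkeeping of constants. The paper's Remark immediately following Definition~\ref{def: weak poly} makes explicit that the convention in force is that the sequence $(\rho_n)_{n\geq 0}$ is bounded above, with $\rho$ taken to be that bound; this handles your concern about the $\rho_n$. Your remaining worry about the $b_n$ is legitimate in principle---for a fixed $U$ one needs $(n+1)\dim_\R(U)-b_n\to\infty$, which fails if $b_n$ grows too fast---but in practice the statement is to be read as asserting convergence at each $U$ for which this growth condition holds, and in the examples of interest the $b_n$ are uniformly bounded or grow sublinearly. Once that is granted, the eventually-constant-tower/Milnor-sequence computation you sketch is exactly right.
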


\begin{definition}\label{def: weak poly}
A unitary functor $F$ is \textit{weakly $(\rho,n)$-polynomial} if the map $\eta \colon F(U) \to T_nF(U)$ is an agreement of order $n$ whenever $\dim_\F(U) \geq \rho$. A functor is \textit{weakly polynomial} if it is weakly $(\rho,n)$-polynomial for all $n\geq 0$. 
\end{definition}

\begin{rem}
In the above definition of weakly polynomial, we require that the functor is weakly $(\rho, n)$-polynomial for all $n$. Here $\rho$ is permitted to depend on $n$, i.e. the functor may be weakly $(\rho_n, n)$-polynomial for all $n$, so long as, the sequence $(\rho_n)_{n \geq 0}$ is bounded above, in which case, one may take $\rho$ to the the upper bound of this sequence, hence why we have fixed a $\rho$ in the definition.
\end{rem}



\section{Comparing the input functors}\label{section: input functors}
Let $V \in \J_0^\mathbf{O}$, then the complexification of $V$, $\C \otimes_\R V$, is a complex vector space such that 
\[
\dim_\C \C \otimes_\R V = \dim_\R V.
\]
Given a $\R$-inner product $\langle-,-\rangle_V$ on $V$, there is a well defined $\C$-inner product on $\C \otimes V$, given by 
\[
\langle (a+ib) \otimes v, (c+id) \otimes w \rangle = \langle av, cw \rangle_V + \langle bv, dw \rangle_V + i\langle bv, cw \rangle_V -i\langle av, dw\rangle_V
\]
where $ a + i b, c+id\in \C$, and $v, w \in V.$

The complexification of an $\R$-linear map $T$ is given by $T_\C = \C \otimes T$. Moreover in the finite dimensional case the matrices representing $T$ and $\C \otimes T$ are equal (corresponding to the inclusion $\O(n) \hookrightarrow \U(n)$) and we get characterisations of images and kernels,
\[
\ker(\C \otimes_\R T) = \C \otimes_\R \ker(T) \ \text{and} \  \mathrm{im}(\C \otimes_\R T) = \C \otimes_\R \mathrm{im}(T).
\]

Given a $\R$-linear isometry, $T\colon V \to W$, $\C \otimes T\colon \C \otimes V \to \C \otimes W, c \otimes v \longmapsto c \otimes T(v)$, is a $\C$-linear isometry, that respects the inner product. It follows that complexification gives a well defined functor $c\colon \J_0^\mathbf{O} \to \J_0^\mathbf{U}$. 

The ``opposite operation'' to complexification is that of realification. Let $W$ be a complex vector space, then its realification $W_\R$  is the set $W$ with vector addition and scalar multiplication by reals inherited unchanged from $W$ and the complex multiplication ``forgotten''. If $\{e_1, \dots, e_n\}$ is a basis for $W$ then $\{e_1, \dots, e_n, ie_1, \dots , ie_n\}$ is a basis for $W_\R$. It follows that
\[
\dim_\R W_\R = 2\dim_\C W = \dim_\R W.
\]

Up to isomorphism it suffices to check that there is a well defined inner product on the realification of $\C^n$ induced by the Hermitian inner product on $\C^n$. Recall for vectors $\mathbf{c}=(c_i), \mathbf{c'}=(c_i')$ in $\C^n$, the Hermitian inner product is given by 
\[
\langle \mathbf{c}, \mathbf{c'} \rangle_\C = \sum_{i=1}^n c_i\overline{c_i'}.
\]
To obtain a real inner product on $\R^{2n} = (\C^n)_\R$, we realise the vectors $\mathbf{c}$ and $\mathbf{c'}$ as $\mathbf{c}= \mathbf{a} +i\mathbf{b}$ and $\mathbf{c'} = \mathbf{a'} + i\mathbf{b}$, where $\mathbf{a},\mathbf{a'},\mathbf{b}, \mathbf{b'} \in \R^n$. By taking the real part of $\langle \mathbf{c}, \mathbf{c}'\rangle_\C$, we then define a real inner product on $\R^{2n}$ as 
\[
\langle (\mathbf{a},\mathbf{b}), (\mathbf{a'} , \mathbf{b'})\rangle_\R= \Re \left( \sum_{i=1}^n c_i\overline{c_i'}\right) = \Re \left( \langle \mathbf{c}, \mathbf{c'} \rangle \right)
\]
under the identification $c_j = a_j+ ib_j$, $c_j' = a_j' +ib_j'$ and where $(\mathbf{a}, \mathbf{b})$ is notation for the vector
\[
(a_1, b_1, a_2, b_2, \cdots, a_n, b_n) \in \R^{2n}.
\]

If $T\colon \C^k \to \C^m$ is a $\C$-linear map then we may view it as a $\R$-linear map $T_\R \colon (\C^k)_\R \to (\C^m)_\R$. 
It follows that 
\[
\ker(T_\R) = (\ker(T))_\R \ \text{and} \ \mathrm{im}(T_\R) = (\mathrm{im}(T))_\R.
\]

If $T\colon V \to W$ is a $\C$-linear isometry, then $T_\R \colon V_\R \to W_\R$ is a $\R$-linear isometry, and it follows that realification gives a well defined functor $r\colon \J_0^\mathbf{U} \to \J_0^\mathbf{O}$.
 
\subsection{Realification and complexification induce Quillen functors}

For an orthogonal functor $F \in \E_0^\mathbf{O}$, precomposition with $r$, which we call ``pre-realification'' defines a unitary functor 
\[
r^* F \colon \J_0^\mathbf{U} \to \T.
\]

Hence pre-realification defines a functor $r^* \colon \E_0^\mathbf{O} \to \E_0^\mathbf{U}$, which has a left adjoint $r_!$ given by the formula,
\[
(r_! E)(V) = \int^{W \in \J_0^\mathbf{U}} E(W) \wedge \J_0^\mathbf{O}(W_\R, V),
\]
i.e. $r_!$ is the left Kan extension along $r$. 

Similarly, complexification defines a functor $c^* \colon \E_0^\mathbf{U} \to \E_0^\mathbf{O}$, which has left adjoint $c_!$ given by the left Kan extension along $c$. These functors are homotopically meaningful when one considers the projective model structures on the categories of input functors.

\begin{lem}\label{QA base level for r}
The adjoint pair
\[
\adjunction{r_!}{\E_0^\mathbf{U}}{\E_0^\mathbf{O}}{r^*}
\]
is a Quillen adjunction, when both categories are equipped with their projective model structures. 
\end{lem}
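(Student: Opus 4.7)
The plan is to verify the standard criterion for a Quillen adjunction: it suffices to check that the right adjoint $r^*$ preserves fibrations and acyclic fibrations. Since both the source and target of $r^*$ are equipped with their projective model structures, in which both classes of maps are defined levelwise, this reduces to a completely formal check.

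More concretely, for a unitary inner product space $V \in \J_0^\mathbf{U}$ we have $(r^*F)(V) = F(V_\R)$, so for any morphism $f \colon F \to G$ in $\E_0^\mathbf{O}$ the morphism $r^*f$ evaluated at $V$ is simply $f_{V_\R} \colon F(V_\R) \to G(V_\R)$. The key observation is that if $f$ is a levelwise Serre fibration (respectively, a levelwise weak homotopy equivalence) in $\E_0^\mathbf{O}$, then in particular $f_{V_\R}$ is a Serre fibration (respectively, a weak homotopy equivalence) for each $V \in \J_0^\mathbf{U}$. Hence $r^*f$ is a fibration (respectively, acyclic fibration) in $\E_0^\mathbf{U}$.

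There is essentially no obstacle here: the lemma is a direct consequence of the fact that $r^*$ is defined by precomposition with the functor $r \colon \J_0^\mathbf{U} \to \J_0^\mathbf{O}$ and that the projective model structure detects fibrations and weak equivalences pointwise. One could alternatively verify the dual formulation by checking that $r_!$ sends the generating (acyclic) cofibrations $\J_0^\mathbf{U}(V,-) \wedge i$ to (acyclic) cofibrations in $\E_0^\mathbf{O}$; by the Yoneda-type computation of left Kan extension on representables, $r_!(\J_0^\mathbf{U}(V,-)) = \J_0^\mathbf{O}(V_\R,-)$, so generating cofibrations are sent to generating cofibrations. Either approach is immediate, and I would present the right-adjoint argument as it is the cleaner of the two.
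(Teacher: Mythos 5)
Your argument via the right adjoint is exactly the paper's proof: since fibrations and weak equivalences in the projective model structures are levelwise and $(r^*f)_V = f_{V_\R}$, the functor $r^*$ preserves (acyclic) fibrations. The alternative check on generating cofibrations you sketch is a valid reformulation, but the main argument matches the paper.
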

\begin{proof}
Let $f\colon E \to F$ be a levelwise fibration (resp. levelwise weak equivalence). Then by definition $r^*f \colon r^*E \to r^* F$ is a levelwise fibration (resp. levelwise weak equivalence). Hence $r^*$ preserves fibrations and acyclic fibrations. 
\end{proof}

\begin{lem}\label{QA base level for c}
The adjoint pair
\[
\adjunction{c_!}{\E_0^\mathbf{O}}{\E_0^\mathbf{U}}{c^*}
\]
is a Quillen adjunction, when both categories are equipped with their projective model structures. 
\end{lem}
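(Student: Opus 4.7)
The proof plan mirrors the proof of Lemma \ref{QA base level for r} almost verbatim, exploiting the fact that both projective model structures on $\E_0^\mathbf{O}$ and $\E_0^\mathbf{U}$ have fibrations and weak equivalences defined levelwise. My strategy is therefore to verify directly that $c^*$ preserves fibrations and acyclic fibrations, which is equivalent to showing $(c_!, c^*)$ is a Quillen adjunction.

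Concretely, let $f \colon E \to F$ be a levelwise fibration (respectively, a levelwise weak equivalence) in $\E_0^\mathbf{U}$. The functor $c^* \colon \E_0^\mathbf{U} \to \E_0^\mathbf{O}$ is precomposition with the complexification functor $c \colon \J_0^\mathbf{O} \to \J_0^\mathbf{U}$, constructed earlier in this section. Hence for each $V \in \J_0^\mathbf{O}$, the map $(c^*f)_V$ is simply $f_{\C \otimes_\R V}$, which is a Serre fibration (respectively, a weak homotopy equivalence) by hypothesis. Therefore $c^*f$ is a levelwise fibration (respectively, a levelwise weak equivalence) in $\E_0^\mathbf{O}$, and the claim follows.

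There is no real obstacle here: the argument is formal once one observes that the projective model structures are defined entirely by evaluation at objects, and that $c^*$ acts on a morphism by restricting its components along the object function of $c$. In particular, the existence of the left adjoint $c_!$ as the left Kan extension along $c$ is all that is needed on the left-hand side, and the Quillen adjunction criterion applied to $c^*$ yields the result immediately.
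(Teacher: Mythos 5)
Your proof is correct and follows the same approach as the paper, which simply notes the argument is identical to that of Lemma \ref{QA base level for r}: since $c^*$ is precomposition with $c$, it preserves levelwise fibrations and levelwise weak equivalences, so it is right Quillen for the projective model structures.
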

\begin{proof}
The proof is all but identical to that of Lemma \ref{QA base level for r}. 
\end{proof}

\section{Comparing the polynomial and homogeneous functors}\label{section: poly and homog}

\subsection{Homogeneous functors} 

Heuristically, the homogeneous functors are the building blocks of the Taylor towers of the calculi. As such we start with a direct comparison between these functors. This comparison is reliant on the classifications of homogeneous functors from orthogonal and unitary calculi. 

\begin{lem}\label{homog r}
If an orthogonal functor $F$ is $n$-homogeneous, then $r^*F$ is $n$-homogeneous. 
\end{lem}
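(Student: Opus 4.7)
The plan is to deploy the classification of $n$-homogeneous functors (Proposition \ref{homog characterisation}) on both sides of the realification, with the bridge between them provided by induction of group actions along the inclusion $\O(n) \hookrightarrow \U(n)$. The case $n=0$ is handled separately and easily: a $0$-homogeneous orthogonal functor is $0$-polynomial, meaning $F(V) \to F(\R \oplus V)$ is a weak equivalence, and since $(\C \oplus W)_\R \cong \R^2 \oplus W_\R$, iterating this equivalence shows the property descends to $r^*F$.

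For $n > 0$, Proposition \ref{homog characterisation} furnishes a spectrum $\Psi_F^n$ with $\O(n)$-action together with a natural levelwise weak equivalence $F(V) \simeq \Omega^\infty[(S^{nV} \wedge \Psi_F^n)_{h\O(n)}]$. Pre-composing with $r$ yields
\[
r^*F(W) \simeq \Omega^\infty\bigl[(S^{nW_\R} \wedge \Psi_F^n)_{h\O(n)}\bigr],
\]
where $nW_\R = \R^n \otimes_\R W_\R$. The geometric heart of the argument is a natural $\R$-linear isometry
\[
\R^n \otimes_\R W_\R \cong (\C^n \otimes_\C W)_\R
\]
equivariant for the subgroup inclusion $\O(n) \hookrightarrow \U(n)$ acting on the first factor of each side. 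This identifies $S^{nW_\R}$ with the restriction to $\O(n)$ of the $\U(n)$-sphere $S^{\C^n \otimes_\C W}$.

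I would then transfer the group action by setting $\Theta = \U(n)_+ \wedge_{\O(n)} \Psi_F^n$, a spectrum with $\U(n)$-action. The standard Frobenius-type identification $X \wedge (\U(n)_+ \wedge_{\O(n)} Y) \cong \U(n)_+ \wedge_{\O(n)} (X|_{\O(n)} \wedge Y)$ for a $\U(n)$-space $X$ and $\O(n)$-space $Y$, combined with the equivalence $(\U(n)_+ \wedge_{\O(n)} Z)_{h\U(n)} \simeq Z_{h\O(n)}$, produces
\[
(S^{\C^n \otimes_\C W} \wedge \Theta)_{h\U(n)} \simeq (S^{nW_\R} \wedge \Psi_F^n)_{h\O(n)}.
\]
Thus $r^*F$ is levelwise weakly equivalent to the unitary functor $W \longmapsto \Omega^\infty[(S^{\C^n \otimes_\C W} \wedge \Theta)_{h\U(n)}]$, which is $n$-homogeneous by the unitary version of Proposition \ref{homog characterisation}. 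Since $n$-homogeneity is invariant under levelwise weak equivalences (the functors $T_n$ and $T_{n-1}$ preserve such equivalences), $r^*F$ is itself $n$-homogeneous. The main obstacle I anticipate is the careful verification of $\O(n)$-equivariance of the sphere identification (which requires tracking the Hermitian-to-real inner product passage set up earlier in the paper) and ensuring that all naturality statements descend correctly from $\J_0^\mathbf{O}$ to $\J_0^\mathbf{U}$ through $r$.
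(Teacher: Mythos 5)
Your proposal is essentially the paper's own argument: both invoke the classification (Proposition \ref{homog characterisation}) on both sides, transfer $\Psi_F^n$ across the inclusion $\O(n) \hookrightarrow \U(n)$ via induction, identify the $\O(n)$-sphere $S^{\R^n\otimes_\R W_\R}$ with $\iota^*S^{\C^n\otimes_\C W}$, and use the Frobenius formula of \cite[Proposition V.2.3]{MM02} together with the change-of-group identity $(\U(n)_+\wedge_{\O(n)} Z)_{h\U(n)}\simeq Z_{h\O(n)}$ to match the two formulas. The only cosmetic difference is that the paper spells out the homotopy-orbits computation through freeness and strict orbits, whereas you cite the orbit change-of-group directly; your separate treatment of $n=0$ is a small bonus of care (the classification is stated for $n>0$), but the substance is the same.
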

\begin{proof}
Let $F$ be an $n$-homogeneous orthogonal functor. Then by the characterisation, Proposition \ref{homog characterisation}, $F$ is levelwise weakly equivalent to the functor 
\[
V \longmapsto \Omega^\infty[(S^{\R^n \otimes_\R V} \wedge \Psi_F^n)_{h\O(n)}]
\]
where $\Psi_F^n$ is an orthogonal spectrum with an $O(n)$-action. It follows that pre-realification of $F$ is levelwise weakly equivalent to the functor 
\[
W \longmapsto \Omega^\infty[(S^{\R^n \otimes_\R W_\R} \wedge \Psi^n_F)_{h\O(n)}].
\]
Using the derived change of group functor, we construct an orthogonal spectrum with an action of $\U(n)$, 
\[
\U(n)_+ \wedge_{\O(n)}^\mathds{L} \Psi_F^n := \U(n)_+ \wedge_{\O(n)} (E\O(n)_+ \wedge \Psi_F^n).
\]
By the classification of $n$-homogeneous unitary functors, Proposition \ref{homog characterisation}, there is an $n$-homogeneous functor $F'$ associated to the above spectrum, given by 
\[
W \longmapsto \Omega^\infty[(S^{\C^n \otimes_\C W} \wedge (\U(n)_+ \wedge_{\O(n)} (E\O(n)_+ \wedge \Psi_F^n)))_{h\U(n)}].
\]
By \cite[Proposition V.2.3]{MM02}, $F'(W)$ is isomorphic to 
\[
\Omega^\infty[\U(n)_+ \wedge_{\O(n)} ((\iota^*S^{\C^n \otimes_\C W} \wedge(E\O(n)_+ \wedge \Psi_F^n)))_{h(\U(n)}].
\]
The $\U(n)$-action on $\U(n)_+ \wedge_{\O(n)} ((\iota^*S^{\C^n \otimes_\C W} \wedge(E\O(n)_+ \wedge \Psi_F^n)))$ is free ($(E\O(n)_+$ is a free $\O(n)$-space), hence taking homotopy orbits equates to taking strict orbits. Hence there is an isomorphism
\[
F'(W) \cong \Omega^\infty[(\U(n)_+ \wedge_{\O(n)} ((\iota^*S^{\C^n \otimes_\C W} \wedge(E\O(n)_+ \wedge \Psi_F^n))))/\U(n)].
\]
The strict $\U(n)$-orbits of the spectrum $\U(n)_+ \wedge_{\O(n)} ((\iota^*S^{\C^n \otimes_\C W} \wedge(E\O(n)_+ \wedge \Psi_F^n))$ are isomorphic to the $\O(n)$-orbits of the spectrum, $\iota^*S^{\C^n \otimes_\C W} \wedge(E\O(n)_+ \wedge \Psi_F^n))$, hence $F'(W)$ is isomorphic to 
\[
\Omega^\infty[(\iota^*S^{\C^n \otimes_\C W} \wedge(E\O(n)_+ \wedge \Psi_F^n)))/\O(n)].
\]
This last is precisely
\[
\Omega^\infty[(\iota^*S^{\C^n \otimes_\C W} \wedge \Psi_F^n)_{h\O(n)}] 
\]
as homotopy orbits is the left derived functor of strict orbits and smashing with $E\O(n)_+$ is a cofibrant replacement in the projective model structure.

Since the action of $\O(n)$ on $\iota^*S^{\C^n \otimes_\C W}$ is equivalent to the $\O(n)$ action on $S^{\R^n \otimes_\R W_\R}$ and the one-point compactification are isomorphic, the above infinite loop space is isomorphic to
\[
\Omega^\infty[(S^{\R^n \otimes_\R W_\R} \wedge \Psi_F^n)_{h\O(n)}].
\]
By the characterisation of $n$-homogeneous orthogonal functors, we see that this is levelwise weakly equivalent to $F(W_\R) = (r^*F)(W).$
\end{proof}

\begin{lem}\label{homog for c}
If a unitary functor $E$ is $n$-homogeneous, then $c^*E$ is $(2n)$-homogeneous. 
\end{lem}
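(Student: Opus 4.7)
The plan is to mirror the strategy used for Lemma \ref{homog r}, interchanging the roles of the two inclusions. The key dimensional observation is that $\C^n \otimes_\C (\C \otimes_\R V) \cong \C^n \otimes_\R V$ as complex vector spaces, and its underlying real vector space is isomorphic to $\R^{2n} \otimes_\R V$ under the standard identification of $\C^n$ with $\R^{2n}$. This accounts for why pre-complexification shifts the degree of homogeneity from $n$ to $2n$.

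First I would apply the classification of $n$-homogeneous unitary functors (Proposition \ref{homog characterisation}) to $E$, giving a levelwise weak equivalence
\[
E(W) \simeq \Omega^\infty[(S^{\C^n \otimes_\C W} \wedge \Psi_E^n)_{h\U(n)}],
\]
where $\Psi_E^n$ is a unitary spectrum with a $\U(n)$-action. Pre-composing with complexification then yields the formula
\[
(c^*E)(V) \simeq \Omega^\infty[(S^{\C^n \otimes_\C(\C \otimes_\R V)} \wedge \Psi_E^n)_{h\U(n)}].
\]
Next I would build a candidate $(2n)$-homogeneous orthogonal functor: let
\[
\O(2n)_+ \wedge_{\U(n)}^{\mathds{L}} \Psi_E^n := \O(2n)_+ \wedge_{\U(n)} (E\U(n)_+ \wedge r_!\Psi_E^n),
\]
viewed as an orthogonal spectrum with an $\O(2n)$-action via the subgroup inclusion $\kappa\colon \U(n) \hookrightarrow \O(2n)$ (with an application of realification $r_!$ to pass from unitary to orthogonal spectra as in the earlier sections). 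The classification of $(2n)$-homogeneous orthogonal functors then associates to this spectrum a functor $E'$ given by
\[
V \longmapsto \Omega^\infty\bigl[(S^{\R^{2n} \otimes_\R V} \wedge \O(2n)_+ \wedge_{\U(n)} (E\U(n)_+ \wedge r_!\Psi_E^n))_{h\O(2n)}\bigr].
\]

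The remaining task is to simplify $E'(V)$ and identify it with $(c^*E)(V)$. Using \cite[Proposition V.2.3]{MM02} I can commute the sphere past the induction along $\kappa$, at the cost of replacing $S^{\R^{2n} \otimes_\R V}$ by $\kappa^* S^{\R^{2n} \otimes_\R V}$. Since $E\U(n)_+$ is a free $\U(n)$-space, homotopy orbits agree with strict orbits, and the standard identification
\[
\O(2n)_+ \wedge_{\U(n)} X / \O(2n) \cong X/\U(n)
\]
reduces the $\O(2n)$-orbits to $\U(n)$-orbits. Passing back to homotopy orbits via the cofibrant replacement $E\U(n)_+$, and rewriting $r_!$ under the equivalence, yields
\[
E'(V) \simeq \Omega^\infty\bigl[(\kappa^* S^{\R^{2n} \otimes_\R V} \wedge \Psi_E^n)_{h\U(n)}\bigr].
\]

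The final step, and the key place where the complexification enters, is to identify the $\U(n)$-representation $\kappa^* S^{\R^{2n} \otimes_\R V}$ with $S^{\C^n \otimes_\C (\C \otimes_\R V)}$. The restricted $\U(n)$-action on $\R^{2n}\otimes_\R V$ coincides with the canonical action on $\C^n \otimes_\R V = \C^n \otimes_\C (\C\otimes_\R V)$ via $\U(n)$ acting on the first tensor factor, so the one-point compactifications are $\U(n)$-equivariantly isomorphic. This gives $E'(V) \simeq (c^*E)(V)$ levelwise, completing the proof that $c^*E$ is $(2n)$-homogeneous. The main technical obstacle I expect is being careful with the identification of $\U(n)$-equivariant structures on the spheres after realification/complexification, and with ensuring that the derived change-of-groups construction $\O(2n)_+ \wedge_{\U(n)}^{\mathds{L}}(-)$ interacts correctly with the realification functor $r_!$ on spectra; this bookkeeping is the analogue of the argument in Lemma \ref{homog r} but traverses the longer chain of subgroup inclusions $\U(n) \hookrightarrow \O(2n)$.
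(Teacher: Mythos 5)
Your proposal is correct and takes essentially the same route as the paper, which simply states the characterisation, exchanges $\Psi_E^n$ for $\O(2n)_+ \wedge^{\mathds{L}}_{\U(n)} \Psi_E^n$, and then says ``a similar argument to Lemma \ref{homog r} yields the result''---your write-up just fills in that deferred computation. One small convention slip: in the paper's setup $\Psi_E^n$ for a unitary $E$ is defined via $(\alpha_n \circ r)_!\,\ind_0^n\varepsilon^*E$, so it already lives in $\s^\mathbf{O}[\U(n)]$ and the extra $r_!$ you insert before the derived induction $\O(2n)_+ \wedge^{\mathds{L}}_{\U(n)}(-)$ is redundant; dropping it avoids the bookkeeping you flag at the end about ``rewriting $r_!$ under the equivalence.''
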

\begin{proof}
Since $E$ is $n$-homogeneous,
\[
E(W) \simeq \Omega^\infty [(S^{\C^n \otimes W} \wedge \Psi_E^n)_{h\U(n)}].
\]
By definition 
\[
(c^*E)(V) = E(\C \otimes_\R V) \simeq  \Omega^\infty [(S^{\C^n \otimes_\C \C \otimes_\R V} \wedge \Psi_E^n)_{hU(n})].
\]
Observe that 
\[
\O(2n)_+ \wedge^\mathds{L}_{\U(n)} \Psi_E^n
\]
is an orthogonal spectrum with $\O(2n)$-action. The classification of homogeneous functors in orthogonal calculus, Proposition \ref{homog characterisation} gives a $(2n)$-homogeneous functor, 
\[
V \longmapsto \Omega^\infty [(S^{\R^{2n} \otimes V} \wedge (\O(2n)\wedge^\mathds{L}_{\U(n)} \Psi_E^n) )_{h\O(2n)}].
\]
A similar argument to Lemma \ref{homog r} yields the result. 
\end{proof}

\subsection{Polynomial functors} 

Using the above results on pre-realification and pre-complexification of homogeneous functors, we can compare polynomial functors. We must add the technical assumption that $F$ is reduced, i.e., that $F(\R^\infty)$ is weakly contractible. Many of the functors which one wishes to consider in the calculi are reduced, and in the situations where they are not, we can take their reduced part to be the homotopy fibre of the map $F \to T_0F$, and work relative to the $0$--polynomial approximation. 

\begin{thm}\label{poly for r}
If an orthogonal functor $F$ is $n$-polynomial and $F(\R^\infty)$ is weakly contractible, then $r^*F$ is an $n$-polynomial unitary functor, that is, the map
\[
r^*T_n^\mathbf{O} F \longrightarrow T_n^\mathbf{U}(r^*T_n^\mathbf{O} F)
\]
is a levelwise weak equivalence for every $F \in \E_0^\mathbf{O}$. 
\end{thm}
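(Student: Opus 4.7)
The plan is to induct along the orthogonal Taylor tower of $F$, using Lemma \ref{homog r} to handle each layer. Since $F$ is $n$-polynomial, the canonical map $F \to T_n^\mathbf{O} F$ is a levelwise weak equivalence, so it suffices to prove that $r^*T_n^\mathbf{O} F$ is $n$-polynomial in the unitary sense. The reducedness assumption $F(\R^\infty) \simeq *$ ensures $T_0^\mathbf{O} F \simeq *$, so the Taylor tower of $F$ is a finite filtration
\[
F \simeq T_n^\mathbf{O} F \longrightarrow T_{n-1}^\mathbf{O} F \longrightarrow \cdots \longrightarrow T_0^\mathbf{O} F \simeq \ast,
\]
whose homotopy fibre at the $k$-th stage is the $k$-homogeneous functor $D_k^\mathbf{O} F$.

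Applying Lemma \ref{homog r} to each layer, $r^*(D_k^\mathbf{O} F)$ is $k$-homogeneous in the unitary sense, and in particular $n$-polynomial, for every $1 \leq k \leq n$. Because $r^*$ is given by precomposition, $(r^*G)(W) = G(W_\R)$, and since the homotopy (co)limits defining $D_k^\mathbf{O}$, $T_k^\mathbf{O}$ and $\tau_n^\mathbf{U}$ are all computed pointwise in $\T$, the functor $r^*$ preserves pointwise homotopy fibre sequences. Applying $r^*$ to the Taylor tower therefore yields homotopy fibre sequences
\[
r^*(D_k^\mathbf{O} F) \longrightarrow r^*(T_k^\mathbf{O} F) \longrightarrow r^*(T_{k-1}^\mathbf{O} F)
\]
in $\E_0^\mathbf{U}$ for each $1 \leq k \leq n$.

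The final step is an induction on $k$ using the closure of $n$-polynomial unitary functors under homotopy fibre sequences: if two of the three terms in such a sequence are $n$-polynomial, so is the third. This closure follows because $\tau_n^\mathbf{U}$ is itself a pointwise homotopy limit, hence preserves homotopy fibre sequences, while the canonical map $G \to \tau_n^\mathbf{U} G$ is natural and so fits into a map of fibre sequences in which two components are weak equivalences. Starting from $r^*(T_0^\mathbf{O} F) \simeq *$, each $r^*(T_k^\mathbf{O} F)$ is inductively $n$-polynomial, and hence so is $r^*F \simeq r^*(T_n^\mathbf{O} F)$. The main technical point is this closure property, which amounts to a Fubini-type commutation of homotopy limits; the homogeneous layers themselves are handled entirely by Lemma \ref{homog r}, and the reducedness hypothesis is only used to ensure the bottom of the tower is contractible so the induction terminates cleanly.
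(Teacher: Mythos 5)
Your proof is correct and shares the paper's basic strategy (an induction over the Taylor tower using Lemma \ref{homog r} to dispatch the homogeneous pieces), but it uses a different decomposition, and this makes the closure step you rely on a bit more delicate than necessary. The paper invokes Weiss's result \cite[Corollary 8.3]{We95} to obtain a homotopy fibre sequence
\[
T_n^{\mathbf{O}} F \longrightarrow T_{n-1}^{\mathbf{O}} F \longrightarrow R_n^{\mathbf{O}} F
\]
in which the object of interest, $T_n^{\mathbf{O}} F$, is the \emph{fibre} and $R_n^{\mathbf{O}}F$ is $n$-homogeneous; applying $r^*$ and using the inductive hypothesis on $T_{n-1}^{\mathbf{O}}F$ together with Lemma \ref{homog r} on $R_n^{\mathbf{O}}F$, one concludes because the homotopy fibre of a map between $n$-polynomial functors is $n$-polynomial --- immediate from $\tau_n^{\mathbf{U}}$ being a pointwise homotopy limit. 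You instead run the induction up the tower through the sequences $D_k^{\mathbf{O}} F \to T_k^{\mathbf{O}} F \to T_{k-1}^{\mathbf{O}} F$, in which the object you want, $T_k^{\mathbf{O}} F$, is the \emph{total space}; to conclude you need the gluing-lemma direction (fibre and base $n$-polynomial implies total $n$-polynomial), which is true but is a genuinely different, slightly stronger, closure statement than the paper needs. Two cautions: first, your blanket assertion that ``if two of the three terms in a fibre sequence are $n$-polynomial, so is the third'' is not correct as stated --- in an unstable setting, knowing the fibre and total space are $n$-polynomial does \emph{not} in general force the base to be --- and you should restrict the claim to the direction you actually use. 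Second, the justification you sketch (that $\tau_n^{\mathbf{U}}$ preserves fibre sequences and naturality gives a map of fibre sequences) supports the ``fibre of $n$-polynomial map is $n$-polynomial'' direction directly, but the ``total from fibre and base'' direction additionally invokes the gluing lemma for fibrations, which is where connectivity of the base quietly enters; the paper's choice of the delooping $R_n^{\mathbf{O}} F$ sidesteps this entirely.
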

\begin{proof}
We argue by induction on the polynomial degree. The case $n=0$ follows by definition. Assume the map $r^*T_{n-1}^\mathbf{O} F \to T_{n-1}^\mathbf{U}(r^*T_{n-1}^\mathbf{O} F)$ is a levelwise weak equivalence. There is a homotopy fibre sequence 
\[
T_n^\mathbf{O} F \longrightarrow T_{n-1}^\mathbf{O} F \longrightarrow R_n^\mathbf{O} F
\]
where $R_n^\mathbf{O} F$ is $n$-homogeneous, since $F$ satisfies the conditions of \cite[Corollary 8.3]{We95}. Lemma \ref{homog r} implies that $r^* R_n^\mathbf{O} F$ is $n$-homogeneous in $\E_0^\mathbf{U}$, and in particular $n$-polynomial. As homotopy fibres of maps between $n$-polynomial objects are $n$-polynomial, the homotopy fibre of the map $r^*T_{n-1}^\mathbf{O} F \to r^* R_n^\mathbf{O} F$ is $n$-polynomial. Computation of homotopy fibres is levelwise, hence the homotopy fibre in question is $r^* T_n^\mathbf{O} F$, and it follows that 
\[
r^* T_n^\mathbf{O} F \longrightarrow T_n^\mathbf{U}(r^* T_n^\mathbf{O} F)
\]
is a levelwise weak equivalence. 
\end{proof}

\begin{thm}\label{n-poly for c}
If an unitary functor $E$ is $n$-polynomial and $E(\C^\infty)$ is weakly contractible,  then $c^*F$ is $(2n)$-polynomial, that is, the map
\[
c^*T_n^\mathbf{U} E \longrightarrow T_{2n}^\mathbf{O}(c^*T_n^\mathbf{U} E)
\]
is a levelwise weak equivalence for all $E \in \E_0^\mathbf{U}$.
\end{thm}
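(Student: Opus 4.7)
The plan is to mirror the proof of Theorem \ref{poly for r} line by line, with the key bookkeeping change being that complexification doubles the polynomial degree. I would proceed by induction on the polynomial degree $n$, and show that $c^*T_n^\mathbf{U} E$ is $(2n)$-polynomial whenever $E(\C^\infty)$ is weakly contractible.

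For the base case $n = 0$, reducedness ($E(\C^\infty) \simeq *$) forces $T_0^\mathbf{U} E$ to be levelwise weakly contractible, so $c^*T_0^\mathbf{U} E$ is also weakly contractible, hence trivially $0 = 2\cdot 0$ polynomial. For the inductive step, I would use the unitary analogue of \cite[Corollary 8.3]{We95} to obtain a homotopy fibre sequence
\[
T_n^\mathbf{U} E \longrightarrow T_{n-1}^\mathbf{U} E \longrightarrow R_n^\mathbf{U} E
\]
where $R_n^\mathbf{U} E$ is $n$-homogeneous (the reducedness hypothesis on $E$ is exactly what is needed to invoke this). Applying $c^*$ preserves this fibre sequence, since homotopy fibres are computed levelwise and $c^*$ is precomposition.

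Now the three pieces are controlled as follows. By Lemma \ref{homog for c}, $c^*R_n^\mathbf{U} E$ is $(2n)$-homogeneous, and in particular $(2n)$-polynomial. To apply the inductive hypothesis to $T_{n-1}^\mathbf{U} E$ I must first verify that it is still reduced, i.e.\ that $(T_{n-1}^\mathbf{U} E)(\C^\infty)$ is weakly contractible; this is immediate because $T_{n-1}^\mathbf{U}$ is built as a homotopy colimit of iterates of $\tau_{n-1}^\mathbf{U}$, each of which is a homotopy limit of copies of $E$ evaluated at subspaces of $\C^\infty \oplus \C^\infty \cong \C^\infty$. The inductive hypothesis then yields that $c^*T_{n-1}^\mathbf{U} E$ is $(2(n-1))$-polynomial, and hence also $(2n)$-polynomial since $k$-polynomial implies $(k+1)$-polynomial.

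The conclusion follows because the homotopy fibre of a map between $(2n)$-polynomial objects is $(2n)$-polynomial; this identifies $c^*T_n^\mathbf{U} E$ as $(2n)$-polynomial. The main obstacle I anticipate is not conceptual but rather bookkeeping: one must make sure the degree shift from $n$ to $2n$ is consistent at every step of the induction, which is why using ``$k$-polynomial $\Rightarrow$ $(k+1)$-polynomial'' to inflate $c^*T_{n-1}^\mathbf{U} E$ from degree $2n - 2$ up to degree $2n$ is crucial. The genuine substantive input is Lemma \ref{homog for c}, which accounts for the doubling; everything else is a formal re-run of the realification argument.
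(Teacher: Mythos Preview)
Your proposal is correct and matches the paper's approach exactly: the paper's proof is a one-line reference back to Theorem~\ref{poly for r}, replacing Lemma~\ref{homog r} by Lemma~\ref{homog for c}, and your write-up spells out precisely that argument. Your explicit observation that the inductive hypothesis only gives $(2n-2)$-polynomiality for $c^*T_{n-1}^\mathbf{U} E$, so that two applications of ``$k$-polynomial $\Rightarrow$ $(k{+}1)$-polynomial'' are needed rather than one, is the only genuine bookkeeping difference from the realification case and you handle it correctly.
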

\begin{proof}
The argument follows as in Theorem \ref{poly for r} using Lemma \ref{homog for c} in place of Lemma \ref{homog r}.
\end{proof}

\subsection{Polynomial model structures}
We turn our attention to a model structure comparison. Since Theorem \ref{poly for r} only applies to reduced functors (those $F$ such that $F(\R^\infty)$ is trivial) we have to restrict our attention to reduced functors. We do this by defining the reduced part of $F$, denoted $\mathrm{red}(F)$ to be the homotopy fibre of the map $F \to T_0F$. Then $\mathrm{red}(F)$ is reduced, and defines a functor $\mathrm{red}(-): \E_0 \to \E_0$.  The composite
\[
\E_0^\mathbf{O} \xrightarrow{\mathrm{red}(-)} \E_0^\mathbf{O} \xrightarrow{\quad r^* \quad} \E_0^\mathbf{U},
\]
is a right Quillen functor, hence induces a functor on the homotopy categories, although it does not have a left adjoint. In Section \ref{section: homotopy categories} we restrict our attention to the homotopy categories, and do not require the existence of a left adjoint to $r^*\mathrm{red}(-)$. 

\begin{lem}\label{QA for n-poly for r}
The composite
\[
n\poly\E_0^\mathbf{O} \xrightarrow{\mathrm{red}(-)} n\poly\E_0^\mathbf{O} \xrightarrow{\quad r^* \quad} n\poly\E_0^\mathbf{U}
\]
preserves acyclic fibrations, and fibrations. In particular, there is an induced functor on homotopy categories.
\end{lem}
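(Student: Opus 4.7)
The plan is to handle each factor of the composite separately, using the characterisations of (acyclic) fibrations in the $n$-polynomial model structure.

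Since the cofibrations of $n\poly\E_0$ agree with the projective cofibrations, the acyclic fibrations in $n\poly\E_0$ coincide with the levelwise acyclic Serre fibrations. Thus $r^*$ preserves them by Lemma \ref{QA base level for r}. For $\mathrm{red}(-)$, I would observe that $T_0$ preserves levelwise weak equivalences, being a sequential homotopy colimit of homotopy limits. Modelling $\mathrm{red}(F)$ as the levelwise path-space pullback of $F \to T_0 F$, the projection $\mathrm{red}(F) \to F$ is a levelwise Serre fibration, and the induced map between two such pullbacks arising from a levelwise acyclic fibration $f$ is again levelwise acyclic by the standard cube lemma for pullbacks along fibrations.

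For the fibration case, recall that $f\colon E \to F$ is a fibration in $n\poly\E_0$ if and only if it is a levelwise Serre fibration and the square relating $f$ to $T_n f$ is a homotopy pullback. The levelwise-fibration part is preserved by both $\mathrm{red}(-)$ and $r^*$ as above. For the homotopy-pullback part, I would establish two commutation results. First, $T_n$ commutes with $\mathrm{red}(-)$ up to levelwise weak equivalence, because $T_n$ is assembled from homotopy limits that commute with the formation of levelwise homotopy fibres, so the hofibre-sequence structure relating $\mathrm{red}(F)$, $F$ and $T_0 F$ is preserved by $T_n$. Second, Theorem \ref{poly for r} gives that $r^* T_n^\mathbf{O} G \simeq T_n^\mathbf{U} r^* G$ for any reduced $G$, and this applies to $G = \mathrm{red}(F)$ since the reduced part is always reduced. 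Combined, these two commutations transport the homotopy-pullback condition for $f$ through the composite $r^*\mathrm{red}(-)$.

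The main obstacle will be verifying the commutation of $T_n$ with $\mathrm{red}(-)$ and the careful application of Theorem \ref{poly for r} to $\mathrm{red}(F)$, together with ensuring that the cube arising from $\mathrm{red}(f)$ remains a cube of levelwise fibrations. Once these compatibilities are in place, both preservation properties follow by tracing through the characterisations of (acyclic) fibrations in the $n$-polynomial model structure, and the existence of an induced functor on homotopy categories is then an application of Ken Brown's lemma for the right-Quillen-like properties just established.
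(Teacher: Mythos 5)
Your treatment of acyclic fibrations is sound (and a bit more detailed than the paper's), but the fibration part relies on a commutation that does not follow from Theorem~\ref{poly for r} and is in fact explicitly flagged as unavailable in the paper. Theorem~\ref{poly for r} asserts that $r^*T_n^\mathbf{O}F$ is itself $n$-polynomial (i.e.\ that $r^*T_n^\mathbf{O}F \to T_n^\mathbf{U}(r^*T_n^\mathbf{O}F)$ is an equivalence) for reduced $F$; it does \emph{not} give $r^*T_n^\mathbf{O}G \simeq T_n^\mathbf{U}(r^*G)$. Bridging $T_n^\mathbf{U}(r^*G)$ and $T_n^\mathbf{U}(r^*T_n^\mathbf{O}G)$ requires knowing that $T_n^\mathbf{U}(r^*\eta_n)$ is an equivalence, which the paper only proves under the extra hypothesis that $G$ is weakly $(\rho,n)$-polynomial (Proposition~\ref{prop: weak poly give agreeing approxs}). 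Indeed, the remark following Proposition~\ref{QA base level for c} says outright that no useful relation between $T_n^\mathbf{U}(r^*X)$ and $r^*T_n^\mathbf{O}(X)$ is known. So your plan for verifying the homotopy-pullback characterization of arbitrary fibrations does not go through.

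The paper sidesteps this by invoking Dugger's Corollary~A.2 (\cite{Du01}): having checked preservation of acyclic fibrations, it suffices to check that the composite sends fibrations between fibrant objects to fibrations. By Hirschhorn's Proposition~3.3.16, fibrations between fibrant objects in a left Bousfield localization such as $n\poly\E_0$ are just the levelwise fibrations, so all one needs is that the composite preserves fibrant objects, i.e.\ $n$-polynomial objects. If $F$ is $n$-polynomial then $\mathrm{red}(F)$ is $n$-polynomial and reduced, and Theorem~\ref{poly for r} then gives that $r^*\mathrm{red}(F)$ is $n$-polynomial. No commutation of $T_n$ with $r^*$ or with $\mathrm{red}(-)$ is needed. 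To repair your argument, replace the direct homotopy-pullback verification with this reduction to fibrant objects; the commutation you would also need of $T_n$ with $\mathrm{red}(-)$, while plausible, similarly involves nontrivial interchange of sequential homotopy colimits with homotopy fibres and is best avoided.
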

\begin{proof}
The acyclic fibrations in the $n$--polynomial model structure are levelwise, hence both terms in the composite, and resultingly their composite, preserves these. It suffices by \cite[Corollary A.2]{Du01} to show that the composite preserves fibrations between fibrant objects, which are the levelwise fibrations by \cite[Proposition 3.3.16]{Hi03}. It hence suffices to show that $F$ preserves fibrant objects. Let $F$ be a $n$--polynomial orthogonal functor, then $\mathrm{red}(F)$ is also $n$--polynomial, and reduced. An application of Theorem \ref{poly for r} implies the result. 
\end{proof}

\begin{lem}\label{QA base level for c}
The composite
\[
n\poly\E_0^\mathbf{U} \xrightarrow{\mathrm{red}(-)} n\poly\E_0^\mathbf{U} \xrightarrow{\quad c^* \quad} (2n)\poly\E_0^\mathbf{O}
\]
preserves acyclic fibrations and fibrations. In particular, there is an induced functor on homotopy categories.
\end{lem}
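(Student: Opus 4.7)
The plan is to follow the proof of Lemma \ref{QA for n-poly for r} essentially verbatim, substituting Theorem \ref{n-poly for c} for Theorem \ref{poly for r} to absorb the doubling of polynomial degree under pre-complexification. I do not anticipate a genuine obstacle; the only arithmetic change is that the target model structure is $(2n)\poly\E_0^\mathbf{O}$ rather than $n\poly\E_0^\mathbf{O}$, which is exactly what Theorem \ref{n-poly for c} is designed to handle.

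First I would dispatch the acyclic fibrations. These coincide with the levelwise acyclic fibrations in the $n$-polynomial model structure, since the latter is a left Bousfield localisation of the projective model structure. Both $\mathrm{red}(-)$ and $c^*$ preserve levelwise acyclic fibrations: for $c^*$ this was established in the projective-level Quillen adjunction already proved, and for $\mathrm{red}(-)$ it is immediate from its construction as a strict homotopy fibre of a map into $T_0$ (applied levelwise). Hence the composite preserves acyclic fibrations.

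For the fibrations, I would appeal to \cite[Corollary A.2]{Du01} to reduce to verifying that fibrations between fibrant objects are preserved. By \cite[Proposition 3.3.16]{Hi03}, a fibration between $n$-polynomial functors is simply a levelwise fibration, which is preserved by both functors in the composite. Thus it suffices to show the composite sends fibrant objects to fibrant objects, i.e.\ that $c^*\mathrm{red}(E)$ is $(2n)$-polynomial whenever $E$ is $n$-polynomial.

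To conclude, take $E$ an $n$-polynomial unitary functor. The functor $\mathrm{red}(E)$, being the homotopy fibre of the map $E \to T_0 E$ between $n$-polynomial functors (the constant functor $T_0 E$ is certainly $n$-polynomial), is again $n$-polynomial, and is reduced by construction. Applying Theorem \ref{n-poly for c} to $\mathrm{red}(E)$ then yields that $c^*\mathrm{red}(E)$ is $(2n)$-polynomial, giving the required preservation of fibrant objects and hence of fibrations. The induced functor on homotopy categories then exists by Ken Brown's lemma, since a right-deformable functor preserving weak equivalences between fibrant objects descends.
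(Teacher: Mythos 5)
Your proof is correct and takes essentially the same route as the paper, which simply refers back to the proof of Lemma \ref{QA for n-poly for r} with Theorem \ref{n-poly for c} substituted for Theorem \ref{poly for r}. You have spelled out the details (left Bousfield localisation structure, the Dugger and Hirschhorn reductions, and the check that $\mathrm{red}(E)$ is reduced and $n$-polynomial) that the paper leaves implicit, but the argument is the same.
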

\begin{proof}
This follows similarly to Lemma \ref{QA for n-poly for r}, using Theorem \ref{n-poly for c} in place of Theorem \ref{poly for r}.
\end{proof}

\subsection{Homogeneous model structures} The homogeneous model structures are right Bousfield localisations of the $n$-polynomial model structures. This fact, together with the further characterisations of the homogeneous model structure provided by the author in \cite{Ta19}, allow for the construction of Quillen functors between the orthogonal $n$-homogeneous model structure and the unitary $n$-homogeneous model structure. 

\begin{prop}\label{QA homog r}
The composite
\[
n\homog\E_0^\mathbf{O} \xrightarrow{\mathrm{red}(-)} n\homog\E_0^\mathbf{O} \xrightarrow{\quad r^* \quad} n\homog\E_0^\mathbf{U}
\]
preserves acyclic fibrations and fibrations. In particular, there is an induced functor on homotopy categories.
\end{prop}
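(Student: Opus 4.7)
The plan is to mirror the proof of Lemma \ref{QA for n-poly for r}, exploiting the characterisations of (acyclic) fibrations in the $n$-homogeneous model structure from Proposition \ref{characterisation of acyclic fibs} and Corollary \ref{lem: acyclic fibrations in n-homog}. Preservation of fibrations is immediate: the fibrations of the $n$-homogeneous model structure are, by construction, precisely the fibrations of the $n$-polynomial model structure, and these are preserved by the composite $r^*\circ\mathrm{red}(-)$ via Lemma \ref{QA for n-poly for r}.

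For acyclic fibrations, I would invoke Proposition \ref{characterisation of acyclic fibs} to rewrite the condition as the conjunction of (i) being a fibration in the $(n-1)$-polynomial model structure and (ii) being a $D_n$-equivalence. Condition (i) is preserved by the $(n-1)$-level analogue of Lemma \ref{QA for n-poly for r}; its proof goes through verbatim, using Theorem \ref{poly for r} at level $n-1$ to ensure that the composite preserves $(n-1)$-polynomial fibrant objects. For condition (ii), my plan is as follows: for a reduced orthogonal functor $G$, Theorem \ref{poly for r} asserts that $r^*T_k^{\mathbf{O}}G$ is $k$-polynomial, so the universal property of $T_k^{\mathbf{U}}$ yields a natural comparison map $T_k^{\mathbf{U}}r^*G \to r^*T_k^{\mathbf{O}}G$. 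Passing to homotopy fibres between levels $n$ and $n-1$ produces a comparison $D_n^{\mathbf{U}}r^*G \to r^*D_n^{\mathbf{O}}G$ between $n$-homogeneous unitary functors, the target being $n$-homogeneous by Lemma \ref{homog r}. If this comparison is a levelwise equivalence then the composite sends $D_n^{\mathbf{O}}$-equivalences to $D_n^{\mathbf{U}}$-equivalences, finishing the proof of preservation of acyclic fibrations.

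The hard part will be verifying that the comparison map $D_n^{\mathbf{U}}r^*G \to r^*D_n^{\mathbf{O}}G$ is a levelwise equivalence, since the individual maps $T_k^{\mathbf{U}}r^*G \to r^*T_k^{\mathbf{O}}G$ need not themselves be equivalences in general --- that they are is essentially the content of the forthcoming Theorem \ref{thm: agreeing tower after realification} on weakly polynomial functors. To circumvent this, I would pass to the spectrum-level description developed in the proof of Lemma \ref{homog r}: both $D_n^{\mathbf{U}}r^*G$ and $r^*D_n^{\mathbf{O}}G$ are $n$-homogeneous unitary functors, so by the classification in Proposition \ref{homog characterisation} each is determined up to equivalence by its associated $\U(n)$-spectrum, and the proof of Lemma \ref{homog r} identifies the spectrum of $r^*F$ (for $F$ an $n$-homogeneous orthogonal functor) as a derived change of group $\U(n)_+\wedge^{\mathds{L}}_{\O(n)}\Psi^n_F$. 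Matching these two spectral descriptions should yield the required equivalence, completing the proof.
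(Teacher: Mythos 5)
Your treatment of fibrations is exactly the paper's, and the first half of your plan for acyclic fibrations (reducing to the $(n-1)$-polynomial fibration condition via Proposition \ref{characterisation of acyclic fibs}) also matches. The problem is in the second half: you propose to show that the composite preserves $D_n$-equivalences by proving that the natural comparison map $D_n^{\mathbf{U}}(r^*G) \to r^*D_n^{\mathbf{O}}G$ is a levelwise equivalence, falling back to the spectrum-level description from Lemma \ref{homog r} to establish this. This is a genuine gap. The remark immediately following this proposition in the paper says precisely that there is no known useful relation between $T_n^{\mathbf{U}}(r^*X)$ and $r^*T_n^{\mathbf{O}}X$ for general $X$, and hence that one cannot conclude $r^*$ preserves all $n$-homogeneous equivalences. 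The spectrum-level comparison you sketch doesn't escape this: the classification in Proposition \ref{homog characterisation} recovers the spectrum of $D_n^{\mathbf{U}}(r^*G)$ from the unitary $n$-th derivative of $r^*G$, and of $r^*D_n^{\mathbf{O}}G$ from $\U(n)_+\wedge^{\mathds{L}}_{\O(n)}$ applied to the orthogonal $n$-th derivative of $G$; matching these two would require exactly the commutation of $r^*$ with $D_n$ that you are trying to prove, so the argument would be circular. Making it non-circular is the content of Theorem \ref{thm: agreeing tower after realification}, which requires the additional weakly polynomial hypothesis.

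The paper's actual argument is both simpler and avoids this comparison entirely. Having shown $r^*\mathrm{red}(f)$ is a fibration in $(n-1)\poly\E_0^{\mathbf{U}}$, observe that the homotopy fibre of any fibration in the $(n-1)$-polynomial model structure is $(n-1)$-polynomial: the defining homotopy pullback square identifies $\mathrm{hofib}(g)$ with $\mathrm{hofib}(T_{n-1}g)\simeq T_{n-1}(\mathrm{hofib}(g))$. An $(n-1)$-polynomial object is weakly trivial in the $n$-homogeneous model structure, and since that model structure is stable, a fibration with trivial homotopy fibre is a weak equivalence. This yields the acyclic fibration conclusion without ever comparing $D_n^{\mathbf{U}}r^*$ with $r^*D_n^{\mathbf{O}}$, and is why it suffices to preserve acyclic fibrations rather than all weak equivalences --- which, by Dugger's criterion, is all that is needed to induce a functor on homotopy categories.
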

\begin{proof}
First suppose that $f\colon E \to F$ is a fibration in $n\homog\E_0^\mathbf{O}$. It follows that $f$ is a fibration in the $n$-polynomial model structure, and hence $r^*\mathrm{red}(f)$ is a fibration in $n\poly\E_0^\mathbf{U}$ by Lemma \ref{QA for n-poly for r}, and hence $n\homog\E_0^\mathbf{U}$.

Suppose further that $f$ in an acyclic fibration in $n\homog\E_0^\mathbf{O}$. By Proposition \ref{characterisation of acyclic fibs}, it follows that $f$ is an $(n-1)$-polynomial fibration and an $D_n^\mathbf{O}$-equivalence. As above it follows that $r^*\mathrm{red}(f)$ is a fibration in $(n-1)\poly\E_0^\mathbf{U}$. In particular the homotopy fibre of $r^*\mathrm{red}(f)$ is $(n-1)$-polynomial. Such objects are trivial in $n\homog\E_0^\mathbf{U}$ hence, since $n\homog\E_0^\mathbf{U}$ is stable, $r^*\mathrm{red}(f)$ is a weak equivalence in $n\homog\E_0^\mathbf{U}$. 
\end{proof}

\begin{prop}\label{QA base level for c}
The composite
\[
n\homog\E_0^\mathbf{U} \xrightarrow{\mathrm{red}(-)} n\homog\E_0^\mathbf{U} \xrightarrow{\quad c^* \quad} (2n)\homog\E_0^\mathbf{O}
\]
preserves acyclic fibrations and fibrations. In particular, there is an induced functor on homotopy categories.
\end{prop}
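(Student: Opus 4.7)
The plan is to mirror the proof of Proposition \ref{QA homog r}, with careful bookkeeping for the degree shift introduced by pre-complexification. As in that proof, I would split the argument into preservation of fibrations and preservation of acyclic fibrations.

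First I would handle fibrations. If $f\colon E \to F$ is a fibration in $n\homog\E_0^\mathbf{U}$, then since the fibrations in the $n$-homogeneous and $n$-polynomial model structures coincide, $f$ is a fibration in $n\poly\E_0^\mathbf{U}$. The polynomial analogue (Lemma \ref{QA base level for c}) applied at level $n$ then yields that $c^*\mathrm{red}(f)$ is a fibration in $(2n)\poly\E_0^\mathbf{O}$, and hence in $(2n)\homog\E_0^\mathbf{O}$.

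Next, suppose $f$ is additionally a weak equivalence. By Proposition \ref{characterisation of acyclic fibs}, $f$ is a fibration in $(n-1)\poly\E_0^\mathbf{U}$ and a $D_n^\mathbf{U}$-equivalence. Applying the polynomial analogue at level $n-1$ produces a fibration $c^*\mathrm{red}(f)$ in $(2n-2)\poly\E_0^\mathbf{O}$. Its homotopy fibre, computed levelwise, is therefore $(2n-2)$-polynomial and in particular $(2n-1)$-polynomial, hence trivial in $(2n)\homog\E_0^\mathbf{O}$. Since the $(2n)$-homogeneous model structure is stable, $c^*\mathrm{red}(f)$ is a weak equivalence in $(2n)\homog\E_0^\mathbf{O}$.

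The only wrinkle relative to the realification case is the degree doubling: one must check that $(2n-2)$-polynomiality suffices for triviality in the $(2n)$-homogeneous structure. This is automatic because every $(2n-2)$-polynomial functor is $(2n-1)$-polynomial by \cite[Proposition 5.4]{We95}, and $(2n-1)$-polynomial functors are annihilated by $D_{2n}^\mathbf{O}$.
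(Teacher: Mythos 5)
Your argument is correct and is exactly what the paper means by ``the proof follows almost verbatim from Proposition~\ref{QA homog r}'': split into fibrations and acyclic fibrations, reduce to the $n$-polynomial (resp.\ $(n-1)$-polynomial) model structures via Proposition~\ref{characterisation of acyclic fibs}, apply the polynomial analogue, and use stability of the homogeneous model structure. Your extra observation that a $(2n-2)$-polynomial homotopy fibre is a fortiori $(2n-1)$-polynomial, and hence trivial in $(2n)\homog\E_0^\mathbf{O}$, is precisely the degree bookkeeping the paper leaves implicit, and it closes the argument cleanly.
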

\begin{proof}
The proof follows almost verbatim from Proposition \ref{QA homog r}.
\end{proof}

\begin{rem}
Without a clearer understanding on how the pre-realification and pre-complexification functors behave with respect to the polynomial approximations, it is not possible to say that they preserve all $n$-homogeneous equivalences. In particular, if $f\colon X \to Y$ is an $n$-homogeneous equivalence then $D_n X$ is levelwise weakly equivalent to $D_n Y$, and there is a diagram of homotopy fibre sequences the form
\[
\xymatrix{
D_n X \ar[r] \ar[d]^\simeq & T_n X \ar[r] \ar[d] & T_{n-1} X \ar[d] \\
D_n Y \ar[r] & T_n Y \ar[r] & T_{n-1} Y,
}
\]
which after applying $r^*$ or $c^*$ results in diagrams of homotopy fibre sequences (since fibre sequences are defined levelwise)
\[
\xymatrix{
r^*D_n^\mathbf{O} X \ar[r] \ar[d]^\simeq & r^*T_n^\mathbf{O} X \ar[r] \ar[d] & r^*T_{n-1}^\mathbf{O} X \ar[d] \\
r^*D_n^\mathbf{O} Y \ar[r] & r^*T_n^\mathbf{O} Y \ar[r] & r^*T_{n-1}^\mathbf{O} Y.
}
\hspace{1cm}
\xymatrix{
c^*D_n^\mathbf{U} X \ar[r] \ar[d]^\simeq & c^*T_n^\mathbf{U} X \ar[r] \ar[d] & c^*T_{n-1}^\mathbf{U} X \ar[d] \\
c^*D_n^\mathbf{U} Y \ar[r] & c^*T_n^\mathbf{U} Y \ar[r] & c^*T_{n-1}^\mathbf{U} Y.
}
\]
Since we do not have a useful relation between $T_n^\mathbf{U}(r^*X)$ and $r^*T_n^\mathbf{O}(X)$, nor between $T_n^\mathbf{O}(c^*X)$ and $c^*T_n^\mathbf{U}(X)$, it is difficult to saying anything meaningful about how the above diagrams relate to the following diagram
\[
\xymatrix{
D_n^\mathbf{U}(r^*X) \ar[r] \ar[d] & T_n^\mathbf{U}(r^*X) \ar[r] \ar[d] & T_{n-1}^\mathbf{U}(r^*X) \ar[d]\\
D_n^\mathbf{U}(r^*Y) \ar[r]& T_n^\mathbf{U}(r^*Y) \ar[r] & T_{n-1}^\mathbf{U}(r^*Y).
}
\hspace{1pt}
\xymatrix{
D_n^\mathbf{O}(c^*X) \ar[r] \ar[d] & T_n^\mathbf{O}(c^*X) \ar[r] \ar[d] & T_{n-1}^\mathbf{O}(c^*X) \ar[d]\\
D_n^\mathbf{O}(c^*Y) \ar[r]& T_n^\mathbf{O}(c^*Y) \ar[r] & T_{n-1}^\mathbf{O}(c^*Y).
}
\]
\end{rem}

\section{Comparing weakly polynomial functors}\label{section: weak poly}

\subsection{Agreement}
The notion of agreement plays a central role in the theory or orthogonal and unitary calculus, for example it is crucial to the proof that the $n$-th polynomial approximation in $n$-polynomial, see \cite{We98}. The pre-realification and pre-complexification functors behave well with respect to functors which agree to a certain order.

\begin{lem}\label{realification of an agreement}
If a map $p\colon F \to G$ in $\E_0^\mathbf{O}$ is an order $n$ orthogonal agreement, then $r^*p \colon r^*F \to r^*G$ in $\E_0^\mathbf{U}$ is an order $n$ unitary agreement. 
\end{lem}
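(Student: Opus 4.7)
My plan is to establish this by a direct unwinding of the two instances of Definition \ref{unitary agreement} — the orthogonal agreement for $p$ and the unitary agreement for $r^*p$ — with the only substantive input being that realification doubles the real dimension: for $W \in \J_0^\mathbf{U}$ with $\dim_\C(W) = k$, its realification $W_\R \in \J_0^\mathbf{O}$ satisfies $\dim_\R(W_\R) = 2k$.

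Concretely, I would suppose the order $n$ orthogonal agreement $p$ is witnessed by constants $\rho \in \mathbb{N}$ and $b \in \Z$, so that $p_U \colon F(U) \to G(U)$ is $((n+1)\dim_\R(U) - b)$-connected whenever $U \in \J_0^\mathbf{O}$ satisfies $\dim_\R(U) \geq \rho$. I would then propose $\rho' := \lceil \rho/2 \rceil$ and $b' := b$ as witnesses for the unitary agreement. Given $W \in \J_0^\mathbf{U}$ with $\dim_\C(W) \geq \rho'$, the realification $W_\R$ satisfies $\dim_\R(W_\R) = 2\dim_\C(W) \geq 2\rho' \geq \rho$, so the orthogonal hypothesis applies at $U = W_\R$. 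By construction of the pre-realification functor, $(r^*p)_W = p_{W_\R}$, and this map is therefore $((n+1)\cdot 2\dim_\C(W) - b)$-connected. Reading $\dim_\R(W)$ for the complex space $W$ as the real dimension of its underlying real vector space — namely $2\dim_\C(W)$ — this is precisely the bound $((n+1)\dim_\R(W) - b')$ demanded by the unitary instance of Definition \ref{unitary agreement}.

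No real obstacle arises; the argument is a bookkeeping compatibility between two appearances of a single definition. The only point that deserves a brief justification is the interpretation of $\dim_\R(W)$ for a complex inner product space as $\dim_\R(W_\R)$. This is forced by two considerations already implicit in the setup: connectivity is a purely topological invariant and so sees only the underlying real vector space, and the asymmetry between the $\dim_\F$ size hypothesis and the $\dim_\R$ connectivity conclusion in Definition \ref{unitary agreement} is consistent with Lemma \ref{connected argument} and with the unitary colimit formula $n\pi_k(Y) = \colim_q \pi_{k+2q} Y(\C^q)$ recalled earlier.
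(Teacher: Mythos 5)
Your argument is correct and matches the paper's proof essentially verbatim: both unwind the definition of agreement, note that $(r^*p)_W = p_{W_\R}$, and use the dimension identity $\dim_\R W_\R = \dim_\R W$. The only (minor) difference is that you carefully track the $\rho$ threshold via $\rho' = \lceil \rho/2 \rceil$, whereas the paper's proof leaves this bookkeeping implicit.
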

\begin{proof}
Since $p$ is an order $n$ orthogonal agreement, there is an integer $b \in \Z$, such that $p_V\colon F(V) \to G(V)$ is $(-b + (n+1) \dim_\R V)$-connected. It follows by definition that  $(r^*p)_W = p_{W_\R} \colon F(W_\R) \to G(W_\R)$ is $(-b + (n+1) \dim_\R W_\R)$-connected. Since $\dim_\R W_\R = \dim_\R W$, it follows that $(r^*p)_W$ is $(-b +(n+1) \dim_\R W)$-connected, and hence $r^*p$ is an order $n$ unitary agreement. 
\end{proof}

\begin{lem}\label{complexification of an agreement}
If a map $p\colon F \to G$ in $\E_0^\mathbf{U}$ is an order $n$ unitary agreement, then $c^*p\colon c^*F \to c^*G$ in $\E_0^\mathbf{O}$ is an order $2n$ unitary agreement. 
\end{lem}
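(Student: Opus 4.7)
The plan is to mimic the proof of Lemma \ref{realification of an agreement}, but track the fact that complexification doubles real dimensions, so an order $n$ bound on the unitary side will yield an order $2n$ bound (in fact a slightly better one) on the orthogonal side. The essential numerical input is that for $V \in \J_0^\mathbf{O}$, the complexification $\C \otimes_\R V$ satisfies $\dim_\C(\C \otimes_\R V) = \dim_\R V$, so that $\dim_\R(\C \otimes_\R V) = 2\dim_\R V$.

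First I would unpack the hypothesis according to Definition \ref{unitary agreement}: since $p$ is an order $n$ unitary agreement, there exist $\rho \in \mathbb{N}$ and $b \in \Z$ such that for every $U \in \J_0^\mathbf{U}$ with $\dim_\C U \geq \rho$, the map $p_U$ is $((n+1)\dim_\R U - b)$-connected. Then I would evaluate $c^*p$ at a real inner product space $V$ with $\dim_\R V \geq \rho$; setting $U = \C \otimes_\R V$ yields $\dim_\C U = \dim_\R V \geq \rho$, so the hypothesis applies and
\[
(c^* p)_V \;=\; p_{\C \otimes_\R V}\colon F(\C \otimes_\R V) \longrightarrow G(\C \otimes_\R V)
\]
is $((n+1)\dim_\R(\C \otimes_\R V) - b)$-connected, that is, $((2n+2)\dim_\R V - b)$-connected.

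To conclude, I would invoke the trivial inequality $(2n+2)\dim_\R V - b \geq (2n+1)\dim_\R V - b$, valid whenever $\dim_\R V \geq 0$. Hence $(c^* p)_V$ is $((2n+1)\dim_\R V - b)$-connected for all $V \in \J_0^\mathbf{O}$ with $\dim_\R V \geq \rho$, which is precisely the condition for $c^* p$ to be an order $2n$ agreement (with the same constants $\rho$ and $b$). There is no real obstacle here: the argument is a dimension-counting exercise whose only subtlety is remembering that the connectivity bound in Definition \ref{unitary agreement} is expressed in terms of $\dim_\R$ regardless of the ground field, so that the doubling effect of complexification on the real dimension is what promotes an order $n$ unitary agreement to an order $2n$ orthogonal agreement.
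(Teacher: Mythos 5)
Your proof is correct and matches the paper's argument exactly: both unpack the connectivity bound, use $\dim_\R(\C \otimes_\R V) = 2\dim_\R V$ to get $(2(n+1)\dim_\R V - b)$-connectivity, and then weaken $2n+2$ to $2n+1$ to land on an order $2n$ agreement. Your handling of $\rho$ via $\dim_\C(\C \otimes_\R V) = \dim_\R V$ is a bit more explicit than the paper's, but the substance is identical.
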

\begin{proof}
Since $p$ is an order $n$ unitary agreement, there is an integer $b$, such that $p_V\colon F(V) \to G(V)$ is $(-b + (n+1) \dim_\R V)$-connected. It follows by definition that  $(c^*p)_W = p_{\C \otimes W} \colon F(\C \otimes W) \to G(\C \otimes W)$ is $(-b + (n+1) \dim_\R (\C \otimes W))$-connected. Since $\dim_\R \C \otimes W = 2\dim_\C \C\otimes W =  2\dim_\R W$, it follows that $(c^*p)_W$ is $(-b +2(n+1) \dim_\R W)$-connected, and hence in particular $(-b + (2n+1)\dim_\R W)$-connected, hence $c^*p$ is an order $2n$ unitary agreement. 
\end{proof}

\begin{rem}\label{unable to compare towers}
In \cite{BE16} Barnes and Eldred give a tower level comparison between Goodwillie calculus and orthogonal calculus. This relies on the functor $F$ from Goodwillie calculus being stably $n$-excisive, that is, the functor must behave well with respect to (co)Cartesian cubes, see \cite[Definition 4.1]{Go91} for the precise definition. The key property gained by a stably $n$-excisive functor is that the polynomial approximation map $p_n \colon F \to P_nF$ in Goodwillie calculus is an agreement of order $n$ in the Goodwillie calculus setting. This allows for a clear comparison between the $n$-polynomial approximation functors of Goodwillie and orthogonal calculi. In general, the map $F \to T_nF$ is not an agreement of order $n$ in either orthogonal or unitary calculus. 
\end{rem}


\subsection{Weak Polynomial} Despite the fact that not all functors agree to a specific order with their polynomial approximations, there is a large class of functors which do. These functors, which were introduced in \cite{Ta19}, are called weakly polynomial and interact meaningfully with the comparisons. 

\begin{prop}\label{prop: weak poly give agreeing approxs} \hspace{10cm}
\begin{enumerate}
\item If $F \in \E_0^\mathbf{O}$ is reduced and $(\rho,n)$-polynomial then the map
\[
T_n^\mathbf{U}(r^*\eta_n)\colon T_n^\mathbf{U}(r^*F) \to T_n^\mathbf{U}(r^*T_n^\mathbf{O}F)
\]
is a levelwise weak equivalence. Thus, the $n$-th polynomial approximation of $r^*F$ is given by the map $r^*F \to r^*(T_n^\mathbf{O}F)$. 
\item If $F \in \E_0^\mathbf{U}$ is reduced and $(\rho,n)$-polynomial then 
\[
T_{2n}^\mathbf{O}(c^*\eta_n)\colon T_{2n}^\mathbf{O}(c^*F) \to T_{2n}^\mathbf{O}(c^*T_n^\mathbf{U}F)
\]
is a levelwise weak equivalence. Thus, the $(2n)$-th polynomial approximation of $c^*F$ is given by the map $c^*F \to c^*(T_n^\mathbf{O}F)$. 
\end{enumerate}
\end{prop}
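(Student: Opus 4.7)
The plan is to deduce both statements directly from the agreement lemmas of the preceding subsection (Lemmas \ref{realification of an agreement} and \ref{complexification of an agreement}), combined with the polynomial-preservation results Theorem \ref{poly for r} and Theorem \ref{n-poly for c}. The essential observation is that the weakly $(\rho,n)$-polynomial hypothesis is, by Definition \ref{def: weak poly}, precisely the statement that the unit $\eta_n\colon F\to T_n F$ is an order $n$ agreement, which is exactly the input needed by Lemma \ref{agreement gives agreeing polynomials}.

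For part (1), since $F$ is weakly $(\rho,n)$-polynomial, the map $\eta_n\colon F\to T_n^\mathbf{O}F$ is an order $n$ orthogonal agreement. Applying $r^*$ and invoking Lemma \ref{realification of an agreement} shows that $r^*\eta_n\colon r^*F \to r^*T_n^\mathbf{O}F$ is an order $n$ unitary agreement. Lemma \ref{agreement gives agreeing polynomials} (applied at $k=n$ in the unitary setting) then yields the required levelwise weak equivalence $T_n^\mathbf{U}(r^*\eta_n)$. For the stated consequence on polynomial approximations, I would observe that $T_n^\mathbf{O}F$ is itself reduced (since $(T_n^\mathbf{O}F)(\R^\infty)\simeq F(\R^\infty)\simeq\ast$, because polynomial approximation preserves the value at $\R^\infty$) and is of course $n$-polynomial. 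Theorem \ref{poly for r} then guarantees that $r^*T_n^\mathbf{O}F$ is already $n$-polynomial in $\E_0^\mathbf{U}$, so that its unit map into $T_n^\mathbf{U}(r^*T_n^\mathbf{O}F)$ is a levelwise weak equivalence. Composing with the equivalence just established (via the evident naturality square) identifies $T_n^\mathbf{U}(r^*F)$ with $r^*T_n^\mathbf{O}F$, exhibiting $r^*F\to r^*T_n^\mathbf{O}F$ as the $n$-polynomial approximation of $r^*F$.

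Part (2) proceeds by a formally identical argument with the obvious substitutions. The unit $\eta_n\colon F\to T_n^\mathbf{U}F$ is an order $n$ unitary agreement, so by Lemma \ref{complexification of an agreement} the map $c^*\eta_n$ is an order $2n$ agreement in $\E_0^\mathbf{O}$, and Lemma \ref{agreement gives agreeing polynomials} at $k=2n$ gives the required weak equivalence after applying $T_{2n}^\mathbf{O}$. Reducedness of $T_n^\mathbf{U}F$ follows exactly as above, so Theorem \ref{n-poly for c} ensures that $c^*T_n^\mathbf{U}F$ is $(2n)$-polynomial in $\E_0^\mathbf{O}$, and the same chain of equivalences identifies $T_{2n}^\mathbf{O}(c^*F)$ with $c^*T_n^\mathbf{U}F$.

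No serious obstacle is expected: once the agreement machinery is in place, the argument is essentially a bookkeeping exercise chaining together Lemmas \ref{realification of an agreement}, \ref{complexification of an agreement}, \ref{agreement gives agreeing polynomials} with Theorems \ref{poly for r} and \ref{n-poly for c}. The only small point requiring explicit verification is that reducedness passes from $F$ to $T_n F$, which is immediate from the fact that polynomial approximation preserves the value at $\F^\infty$.
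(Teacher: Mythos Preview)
Your argument is correct and coincides with the paper's own proof: both use the commutative square with corners $r^*F$, $r^*T_n^{\mathbf O}F$, $T_n^{\mathbf U}(r^*F)$, $T_n^{\mathbf U}(r^*T_n^{\mathbf O}F)$, invoking Lemma~\ref{realification of an agreement} together with Lemma~\ref{agreement gives agreeing polynomials} for the bottom horizontal map and Theorem~\ref{poly for r} for the right vertical map. Your explicit remark that reducedness of $F$ passes to $T_nF$ (needed to apply Theorem~\ref{poly for r}) is a detail the paper leaves implicit.
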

\begin{proof}
We prove part $(1)$, part $(2)$ is similar. There is a commutative diagram 
\[
\xymatrix{
r^*F \ar[r] \ar[d] & r^*(T_n^\mathbf{O}F) \ar[d] \\
T_n^\mathbf{U}(r^*F) \ar[r] & T_n^\mathbf{U}(r^*T_n^\mathbf{O})
}
\]
in which in right--hand vertical map is a levelwise weak equivalence by Theorem \ref{poly for r}. Furthermore, the top horizontal map is an order $n$ orthogonal agreement, since $F$ is weakly $(\rho,n)$-polynomial and pre-realification preserves agreements, Lemma \ref{realification of an agreement}. The lower horizontal map is then a levelwise weak equivalence by Lemma \ref{agreement gives agreeing polynomials}. It follows that the map $r^*F \to T_n^\mathbf{U}(r^*F)$ is a levelwise weak equivalence if and only if the map $r^*F \to  r^*(T_n^\mathbf{O}F)$ is a levelwise weak equivalence.
\end{proof}

\begin{thm}\label{thm: agreeing tower after realification}
Let $F$ be a weakly polynomial reduced orthogonal functor. Then the unitary Taylor tower associated to $r^*F$  is equivalent to the pre-realification of the orthogonal Taylor tower associated to $F$, that is, for all $V \in \J_0^\mathbf{U}$, and all $n \geq 0$, there is a zig-zag of weak equivalences between the top and bottom rows of the following diagram, 
\[
\xymatrix{
r^*D_{n}^\mathbf{O}F(V) \ar[r]  \ar[d]^\simeq & r^*T_{n}^\mathbf{O}F(V) \ar[r] \ar[d]^\simeq & r^*T_{n-1}^\mathbf{O}F(V) \ar[d]^\simeq \\
\mathcal{F}(V) \ar[r]  & T_{n}^\mathbf{U}(r^*(T_{n}^\mathbf{O}F))(V) \ar[r] &T_{n-1}^\mathbf{U}(r^*(T_{n-1}^\mathbf{O}F))(V) \\
 D_{n}^\mathbf{U}(r^*F)(V) \ar[r] \ar[u]_\simeq  & T_{n}^\mathbf{U}(r^*F)(V) \ar[r] \ar[r] \ar[u]_\simeq & T_{n-1}^\mathbf{U}(r^*F)(V).  \ar[u]_\simeq
}
\]
where $\mathcal{F}(V)$ is the homotopy fibre of the map $T_{n}^\mathbf{U}(r^*(T_{n}^\mathbf{O}F))(V) \to T_{n-1}^\mathbf{U}(r^*(T_{n-1}^\mathbf{O}F))(V)$.
\end{thm}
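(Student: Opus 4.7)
The plan is to treat the diagram column by column. For the right-hand column I would first verify that reducedness passes from $F$ to its polynomial approximation $T_n^\mathbf{O}F$, for instance via the equivalence $T_0^\mathbf{O}T_n^\mathbf{O}F \simeq T_0^\mathbf{O}F$ (which itself follows from the fact that an $n$-polynomial functor is $0$-polynomial up to the standard tower relations). Once this is in hand, Theorem \ref{poly for r} applied to the reduced $n$-polynomial functor $T_n^\mathbf{O}F$ immediately gives the levelwise weak equivalence $r^*T_n^\mathbf{O}F \to T_n^\mathbf{U}(r^*T_n^\mathbf{O}F)$, and the analogous statement at degree $n-1$.

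The middle column is the key content and is supplied directly by Proposition \ref{prop: weak poly give agreeing approxs}(1): since $F$ is weakly polynomial it is, in particular, weakly $(\rho, n)$-polynomial for every $n$, so the map $T_n^\mathbf{U}(r^*\eta_n) \colon T_n^\mathbf{U}(r^*F) \to T_n^\mathbf{U}(r^*T_n^\mathbf{O}F)$ is a levelwise weak equivalence, and likewise at degree $n-1$.

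For the left-hand column, since $r^*$ is defined pointwise it preserves levelwise fibre sequences, so $r^*D_n^\mathbf{O}F$ is the levelwise homotopy fibre of the top row. By definition $D_n^\mathbf{U}(r^*F)$ is the levelwise homotopy fibre of the bottom row, and $\mathcal{F}$ is the homotopy fibre of the middle row. The whole diagram commutes by naturality of $\eta_n$ and of the Taylor tower structure maps $T_n \to T_{n-1}$, so the vertical equivalences already established in the other two columns force the induced maps on homotopy fibres to be levelwise weak equivalences as well.

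In summary, the theorem is essentially a diagrammatic repackaging of Theorem \ref{poly for r} and Proposition \ref{prop: weak poly give agreeing approxs}. The one step I expect to require a small independent argument is the propagation of reducedness from $F$ to each $T_n^\mathbf{O}F$, which is necessary before Theorem \ref{poly for r} can be legitimately invoked on the polynomial approximations rather than only on $F$ itself; after that the remainder of the proof is a routine diagram chase using that both $r^*$ and homotopy fibres are computed levelwise.
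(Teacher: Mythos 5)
Your proposal is essentially the paper's own argument. The paper writes down the analogous three-column commutative diagram (in transposed form), cites Theorem~\ref{poly for r} for the maps $r^*(T_k^\mathbf{O}F) \to T_k^\mathbf{U}(r^*(T_k^\mathbf{O}F))$, cites Proposition~\ref{prop: weak poly give agreeing approxs} for the maps $T_k^\mathbf{U}(r^*F) \to T_k^\mathbf{U}(r^*(T_k^\mathbf{O}F))$, and then concludes by passing to the induced map on homotopy fibres -- exactly the route you describe. The one point you flag that the paper leaves silent is the propagation of reducedness from $F$ to $T_n^\mathbf{O}F$; this is indeed required to invoke Theorem~\ref{poly for r} on $T_n^\mathbf{O}F$, and your sketch ($T_0T_n \simeq T_0$, or equivalently that $T_nF(\R^\infty) \simeq F(\R^\infty)$ since $\tau_n$ is a homotopy limit over a diagram with a terminal object) is the correct, if short, justification.
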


\begin{proof}
There is a commutative diagram 
\[
\xymatrix@C+1cm{
r^*(T_n^\mathbf{O}F) \ar[r]^{\eta_n} \ar[d] & T_n^\mathbf{U}(r^*(T_n^\mathbf{O}F)) \ar[d] & T_n^\mathbf{U}(r^*F) \ar[l]_(.4){T_n^\mathbf{U}(r^*\eta_n)} \ar[d] \\
r^*(T_{n-1}^\mathbf{O}F) \ar[r]_{\eta_{n-1}}  & T_{n-1}^\mathbf{U}(r^*(T_{n-1}^\mathbf{O}F)) & T_{n-1}^\mathbf{U}(r^*F) \ar[l]^(.4){T_{n-1}^\mathbf{U}(r^*\eta_{n-1})} \\
}
\]
where the left hand horizontal maps are both weak equivalences by Theorem \ref{poly for r} and the right hand horizontal maps are both weak equivalences by Proposition \ref{prop: weak poly give agreeing approxs}. It follows that $r^*D_n^\mathbf{O}F$ is levelwise weakly equivalent to $D_n^\mathbf{U}(r^*F)$, and the result follows. 
\end{proof}

Combining Theorem \ref{thm: agreeing tower after realification} with Proposition \ref{homog r} we achieve the following corollary.

\begin{cor}
If $F$ is weakly polynomial, reduced and $\Theta_F^n$ is the spectrum associated to the homogeneous functor $D_n^\mathbf{O}F$, then $\U(n)_+ \wedge^\mathbb{L}_{\O(n)} \Theta_F^n$ is the spectrum associated to the homogeneous functor $D_n^\mathbf{U}(r^*F)$.
\end{cor}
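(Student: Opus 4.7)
The plan is to combine Theorem \ref{thm: agreeing tower after realification} with the calculation carried out inside the proof of Lemma \ref{homog r}, invoking the essential uniqueness of the spectrum classifying an $n$-homogeneous functor.

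First, I would apply Theorem \ref{thm: agreeing tower after realification} to the weakly polynomial reduced orthogonal functor $F$. This gives a levelwise weak equivalence
\[
D_n^\mathbf{U}(r^*F) \simeq r^* D_n^\mathbf{O} F.
\]
By definition $D_n^\mathbf{O} F$ is an $n$-homogeneous orthogonal functor, and by hypothesis its classifying spectrum (in the sense of Proposition \ref{homog characterisation}) is $\Theta_F^n$.

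Next, I would extract the content of the proof of Lemma \ref{homog r}: for any $n$-homogeneous orthogonal functor $G$ with associated orthogonal spectrum $\Psi_G^n$ carrying an $\O(n)$-action, the pre-realification $r^*G$ is levelwise weakly equivalent to the $n$-homogeneous unitary functor
\[
W \longmapsto \Omega^\infty\bigl[(S^{\C^n \otimes_\C W} \wedge (\U(n)_+ \wedge^\mathbb{L}_{\O(n)} \Psi_G^n))_{h\U(n)}\bigr].
\]
That is, the classifying unitary spectrum of $r^*G$ is precisely $\U(n)_+ \wedge^\mathbb{L}_{\O(n)} \Psi_G^n$. Applying this with $G = D_n^\mathbf{O} F$ and $\Psi_G^n = \Theta_F^n$ identifies the classifying spectrum of $r^* D_n^\mathbf{O} F$ as $\U(n)_+ \wedge^\mathbb{L}_{\O(n)} \Theta_F^n$.

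Finally, chaining the two equivalences and invoking the Quillen equivalence between the $n$-homogeneous model structure on $\E_0^\mathbf{U}$ and spectra with a $\U(n)$-action (which guarantees that the spectrum attached to an $n$-homogeneous functor is determined up to stable equivalence by the levelwise homotopy type of the functor) yields that the spectrum associated to $D_n^\mathbf{U}(r^*F)$ is $\U(n)_+ \wedge^\mathbb{L}_{\O(n)} \Theta_F^n$, as desired. There is no real obstacle here beyond bookkeeping: the only mild subtlety is that one must appeal to the Quillen equivalence (rather than to the one-sided statement of Proposition \ref{homog characterisation}) to conclude that equivalent homogeneous functors necessarily have stably equivalent classifying spectra, but this is immediate from the derived equivalence established for both calculi.
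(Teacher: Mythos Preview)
Your proposal is correct and follows exactly the approach the paper indicates: combine Theorem \ref{thm: agreeing tower after realification} (giving $D_n^\mathbf{U}(r^*F)\simeq r^*D_n^\mathbf{O}F$) with the computation inside the proof of Lemma \ref{homog r} identifying the classifying spectrum of $r^*G$ as $\U(n)_+\wedge^{\mathbb{L}}_{\O(n)}\Psi_G^n$. Your added remark that one needs the Quillen equivalence to pass from a levelwise equivalence of homogeneous functors to an equivalence of classifying spectra is a fair point of care, but otherwise there is nothing to add.
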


\section{A complete model category comparison}\label{section: model categories}

Combining the model categories for orthogonal and unitary calculus produces the following diagram, Figure \ref{fig 1}, which gives a complete comparison between the orthogonal and unitary calculi.  The remained of this paper is devoted to demonstrating how this diagram commutes. In Section \ref{section: spectra} we consider the comparisons between the different categories of spectra used throughout the calculi, and show that the top portion of the diagram commutes. In Section \ref{section: intermediate categories}, we turn our attention to the intermediate categories for the calculi. Here we introduce two new categories, which act as intermediate categories between the standard intermediate categories. With these in place, we demonstrate how the middle portion of Figure \ref{fig 1} commutes. It is then only left to describe how the lower two pentagons of Figure \ref{fig 1} commute. We deal with this in Section \ref{section: homotopy categories}. This is considerably more complex since we are attempting to compose left and right Quillen functors with each other. Before turning our attention to proving that Figure \ref{fig 1} commutes, we give an example of how this diagram may be applied in practice. This will utilise the results of Sections \ref{section: spectra} and \ref{section: intermediate categories}, and especially the commutation results of Section \ref{section: homotopy categories}. 

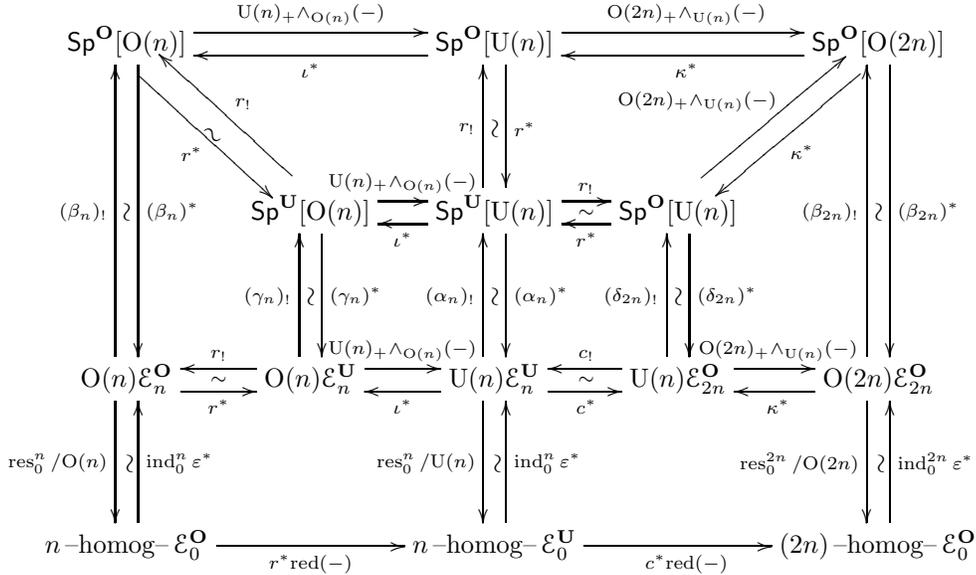
\begin{figure}[h]
\[
\xymatrix@C=1em@R=2em{
\s^\mathbf{O}[\O(n)] \ar@<1ex>[dddd]^{(\beta_n)^*}_\vsim	\ar@<1ex>[rr]^{\U(n)_+\wedge_{\O(n)}(-)}\ar@<-1.3ex>[ddr]_{r^*}				& 																												&		\s^\mathbf{O}[\U(n)]	 \ar@<1ex>[dd]^{r^*}_\vsim \ar@<1ex>[rr]^{\O(2n)_+\wedge_{\U(n)}(-)}\ar@<1ex>[ll]^{\iota^*}									&										&		\s^\mathbf{O}[\O(2n)] \ar@<1ex>[dddd]^{(\beta_{2n})^*}_{\vsim}	\ar@<1ex>[ll]^{\kappa^*}	 	\ar@<1ex>[ddl]^{\kappa^*}				\\
																										& 																												&																																									&										&											\\
																											& 	\s^\mathbf{U}[\O(n)]	 \ar@<1ex>[dd]^{(\gamma_n)^*}_\vsim \ar@<1ex>[r]^{\U(n)_+\wedge_{\O(n)}(-)} \ar@<-1.3ex>[uul]_{r_!}^\dsim			&		\s^\mathbf{U}[\U(n)] \ar@<1ex>[dd]^{(\alpha_n)^*}_\vsim 	\ar@<1ex>[uu]^{r_!} \ar@<1ex>[l]^{\iota^*}	\ar@<1ex>[r]^{r_!}_\sim												&	\s^\mathbf{O}[\U(n)] \ar@<1ex>[dd]^{(\delta_{2n})^*}_\vsim 	\ar@<1ex>[l]^{r^*}	\ar@<1ex>[uur]^{\O(2n)_+ \wedge_{\U(n)}(-)}								&											\\
																										& 																												&																																																	&										&											\\
\O(n)\E_n^\mathbf{O} \ar@<-1ex>[dd]_{\res_0^n/\O(n)}^\vsim \ar@<1ex>[uuuu]^{(\beta_n)_!}\ar@<-1ex>[r]_{r^*}^\sim	&		\O(n)\E_n^\mathbf{U} \ar@<1ex>[uu]^{(\gamma_n)_!} \ar@<1ex>[r]^{\U(n)_+\wedge_{\O(n)}(-)}\ar@<-1ex>[l]_{r_!}		& 		\U(n)\E_n^\mathbf{U} \ar@<-1ex>[dd]_{\res_0^n/\U(n)}^\vsim \ar@<1ex>[uu]^{(\alpha_n)_!} \ar@<1ex>[l]^{\iota^*} \ar@<-1ex>[r]_{c^*}^\sim											& 		\U(n)\E_{2n}^\mathbf{O} \ar@<1ex>[r]^{\O(2n)_+\wedge_{\U(n)}(-)} \ar@<-1ex>[l]_{c_!}\ar@<1ex>[uu]^{(\delta_{2n})_!}		&		\O(2n)\E_{2n}^\mathbf{O} \ar@<-1ex>[dd]_{\res_0^{2n}/\O(2n)}^\vsim \ar@<1ex>[uuuu]^{(\beta_{2n})_!}\ar@<1ex>[l]^{\kappa^*}				\\
																											& 																												&																																									&										&											\\
n\homog\E_0^\mathbf{O} \ar@<-1ex>[uu]_{\ind_0^n\varepsilon^*}\ar[rr]_{r^*\mathrm{red}(-)}											& 																												&		n\homog\E_0^\mathbf{U} \ar@<-1ex>[uu]_{\ind_0^n\varepsilon^*}	\ar[rr]_{c^*\mathrm{red}(-)}																	&										&		(2n)\homog\E_0^\mathbf{O}	 \ar@<-1ex>[uu]_{\ind_0^{2n}\varepsilon^*}		\\
}
\]
\caption{Model categories for orthogonal and unitary calculi}
\label{fig 1}
\end{figure}

The $n$-sphere functor interacts well with our comparisons as it is reduced. In fact the following example works just as well for $\J_n(V,-)$, $V \in \J^\mathbf{O}$.

\begin{example}
Let $n\mathbb{S}$ be the $n$-sphere from orthogonal calculus, i.e. the object $\J_n^\mathbf{O}(0,-)$ in  $n\homog\E_0^\mathbf{O}$. Under the Quillen equivalence between $n\homog\E_0^\mathbf{O}$ and $\O(n)\E_n^\mathbf{O}$, $n\mathbb{S}$ corresponds to $\O(n)_+ \wedge n\mathbb{S}$ in $\O(n)\E_n^\mathbf{O}$, which under the Quillen equivalence between $\s^\mathbf{O}[\O(n)]$ and $\O(n)\E_n^\mathbf{O}$ corresponds to $\O(n)_+ \wedge \mathbb{S}$, that is, 
\[\xymatrix@C+1cm{
\O(n)_+ \wedge \mathbb{S}
&
\ar@{|->}[l]_-{\mathds{L}{(\beta_n)}_!}
\O(n)_+ \wedge n\mathbb{S}
\ar@{|->}[r]^-{\mathds{L} \res_0^n/\O(n)} &
n\mathbb{S}
}\]
Applying (derived) change of group functor sends $\O(n)_+ \wedge \mathbb{S}$ to $\U(n)_+ \wedge \mathbb{S}$. As before, this is the stable $n$-th derivative of $n\mathbb{S}$, i.e.
\[\xymatrix@C+1cm{
\U(n)_+ \wedge \mathbb{S}
&
\ar@{|->}[l]_-{\mathds{L}{(\alpha_n\circ r)}_!}
\U(n)_+ \wedge n\mathbb{S}
\ar@{|->}[r]^-{\mathds{L} \res_0^n/\U(n)} &
n\mathbb{S}
}\]

It follows that $\mathds{R}r^*(n\mathbb{S}) \cong n\mathbb{S}$ in $\Ho(n\homog\E_0^\mathbf{U})$. Applying the (derived) change of group functor $\U(n)_+ \wedge \mathbb{S}$ corresponds to $\O(2n)_+ \wedge n\mathbb{S}$ in $\s^\mathbf{O}[\O(2n)]$. This is the stable $(2n)$-th derivative of $(2n)\mathbb{S}$, i.e.
\[
\xymatrix@C+1cm{
\O(2n)_+ \wedge \mathbb{S}
&
\ar@{|->}[l]_-{\mathds{L}{(\beta_{2n})}_!}
\O(2n)_+ \wedge {(2n)}\mathbb{S}
\ar@{|->}[r]^-{\mathds{L} \res_0^{2n}/\O(2n)} &
({2n})\mathbb{S}
}
\]

It follows that $c^*(n\mathbb{S}) \cong (2n)\mathbb{S}$, in $\Ho((2n)\homog\E_0^\mathbf{O})$ and $(rc)^*(n\mathbb{S}) \cong (2n)\mathbb{S}$ in $\Ho((2n)\homog\E_0^\mathbf{O})$. This is the functor calculus version of complexification followed by realification resulting in a vector space of twice the original dimension. 
\end{example}

\section{Comparisons of spectra}\label{section: spectra}
We have constructed a Quillen adjunction between the orthogonal and unitary $n$-homogeneous model structures. To give a complete comparison of the theories we must address the comparisons between the other two categories in the zig-zag of Quillen equivalences of Barnes and Oman \cite{BO13} and the author \cite{Ta19}. We start by addressing the relationship between the categories of spectra. For this, we recall the definitions and model structures involved.

\begin{definition}
For a compact Lie group $G$, the category $\s[G]$, is the category of $G$-objects in $\s$ and $G$-equivariant maps, that is, an object in $\s[G]$ is a continuous functor $ X \colon \J_1^\F \to \T$ such that there is a group homomorphism $G \to \Aut(X)$, where $\Aut(X)$ denotes the group of automorphism of $X$ in $\s$. We will refer to  $\s[G]$ as the category of naïve $G$-spectra.
\end{definition}

The category  $\s[G]$ comes with a levelwise and a stable model structure induced by the standard levelwise and stable model structures of $\s$. Importantly theses model structures are defined independently of group actions, we take levelwise weak equivalences, not levelwise weak equivalences on fixed points. 

\begin{lem}
There is a cellular proper and topological model structure on the category $\s[G]$ with the weak equivalences and fibrations defined levelwise. This model structure is called the projective model structure.
\end{lem}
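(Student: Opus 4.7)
The plan is to obtain this model structure by transferring the projective model structure from $\s$ to $\s[G]$ along the free-forgetful adjunction
\[
\adjunction{G_+\wedge(-)}{\s}{\s[G]}{U},
\]
where $U$ forgets the $G$-action and the left adjoint equips a spectrum with the regular $G$-action induced by multiplication on $G_+$. Since limits and colimits in $\s[G]$ are inherited from $\s$ with $G$-actions applied componentwise, the category $\s[G]$ is bicomplete. I would then propose $G_+\wedge I$ and $G_+\wedge J$ as the sets of generating cofibrations and generating acyclic cofibrations, where $I$ and $J$ are the generating sets of the projective model structure on $\s$.

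I would then invoke Kan's transfer theorem. Smallness of the domains of $G_+\wedge I$ and $G_+\wedge J$ relative to the respective cell complexes follows from smallness of the original domains in $\s$, because $U$ commutes with the filtered colimits used in the small object argument. The substantive condition to verify is that $U$ sends every relative $(G_+\wedge J)$-cell complex to a weak equivalence of spectra. This reduces, by the usual transfinite induction, to showing that for each generator $j\colon A\to B$ in $J$, the map $G_+\wedge j$ is an acyclic cofibration in $\s$. Because $G$ is a compact Lie group, $G_+$ is cofibrant in $\T$; combined with the fact that the projective model structure on $\s$ is topological, the pushout-product axiom yields the required conclusion.

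For the additional structure, cellularity transfers from $\s$: the domains of $G_+\wedge I$ remain compact relative to the new cofibrations (since $U$ detects the underlying filtered colimits), smallness of the new cofibrations follows from smallness in $\s$, and cofibrations in $\s[G]$ are effective monomorphisms because they are so after forgetting the $G$-action. Both left and right properness are inherited levelwise since weak equivalences, fibrations, and cofibrations are all detected through $U$, and $\s$ is proper. The topological enrichment passes to $\s[G]$ via the obvious $G$-action on mapping spaces together with the compatibility of $G_+\wedge(-)$ with the tensoring over $\T$.

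The main obstacle, though essentially routine, will be the careful verification of the acyclicity condition in the transfer theorem and of the cellularity hypotheses; the only genuine subtlety comes from allowing $G$ to be an arbitrary compact Lie group rather than a finite or discrete group, which requires that $G_+$ be cofibrant in $\T$ and that the topological enrichment of $\s$ play well with taking free $G$-objects. Once these checks are in place, the analogous stable model structure in the next step of the paper can be obtained by a further left Bousfield localisation.
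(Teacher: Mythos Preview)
Your approach is correct and standard, but note that the paper does not actually supply a proof of this lemma: it is stated as a background fact, in keeping with the paper's general practice of citing the diagram-space machinery of \cite{MMSS01} for such projective model structures. The intended argument is almost certainly the observation that $\s[G]$ is itself a category of diagram spaces (enriched functors from $\J_1 \times BG$, or equivalently $G$-objects in a diagram category), so that the general existence theorems for projective model structures on diagram categories apply directly.

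Your transfer argument along the free--forgetful adjunction is an equally valid route and makes the role of the compact Lie group hypothesis more explicit: you need $G_+$ to be a cofibrant based space so that $G_+ \wedge (-)$ preserves (acyclic) cofibrations, which is exactly where compactness of $G$ enters. One small point to tighten: when you say cofibrations are ``detected through $U$'', this is true in the sense that $U$ sends cofibrations to cofibrations (because the generating cofibrations $G_+ \wedge I$ forget to cofibrations in $\s$), but $U$ does not \emph{reflect} cofibrations. Your properness argument still goes through because $U$ creates both limits and colimits (as $\s[G]$ is a functor category over $BG$), so pushouts and pullbacks are computed underlying and the properness of $\s$ transfers. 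Both approaches yield the same model structure; the diagram-category viewpoint is quicker to cite, while yours is more self-contained.
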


\begin{lem}
There is a cofibrantly generated topological model structure on the category $\s[G]$ with, weak equivalences the $\pi_*$-isomorphisms, and fibrations the levelwise fibrations $f\colon \Theta \to \Psi$ such that the diagram 
\[
\xymatrix{
\Theta(V) \ar[r] \ar[d] & \Psi(V) \ar[d] \\
\Omega^W \Theta(V \oplus W) \ar[r] & \Omega^W \Psi(V \oplus W)
}
\]
is a homotopy pullback square for all $V, W \in \J_1$.
\end{lem}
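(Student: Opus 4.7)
The plan is to obtain this stable model structure as a left Bousfield localization of the projective structure from the preceding lemma, which is cellular, left proper, and topological, so Hirschhorn's machinery applies directly. For the localizing set, I would take the image, under the left Quillen functor $G_+ \wedge (-) \colon \s \to \s[G]$, of the standard set $S_0 = \{ \lambda_{V,W} \colon F_{V \oplus W} S^W \to F_V S^0 \}_{V,W \in \J_1}$ of stabilization maps in $\s$ (where $F_V$ is the left adjoint to evaluation at $V$); this is the set whose Bousfield localization produces the non-equivariant stable structure by the standard theory of Mandell--May--Schwede--Shipley. Call the resulting set $S$.

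Hirschhorn's theorem then yields a cofibrantly generated topological model structure on $\s[G]$ with the same cofibrations as the projective structure. To identify the $S$-local equivalences and $S$-local objects, I would use that the forgetful functor $U \colon \s[G] \to \s$ admits both the left adjoint $G_+ \wedge (-)$ and the right adjoint $F(G_+, -)$ (since $G$ is a compact Lie group), and preserves and reflects projective weak equivalences as well as projective fibrations. Consequently derived mapping space computations in $\s[G]$ reduce under $U$ to those in $\s$, so an object $\Theta$ is $S$-local if and only if $U\Theta$ is an $\Omega$-spectrum; by the identification of stable equivalences in $\s$, the $S$-local equivalences are then exactly the $\pi_*$-isomorphisms (in the sense given in $\s[G]$, whose homotopy groups are computed forgetfully).

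For the fibrations, Hirschhorn's general description says that $f \colon \Theta \to \Psi$ is a fibration in the localized structure precisely when it is a projective fibration and the square comparing $f$ to its $S$-local fibrant replacement is a homotopy pullback. Unwinding this condition against the fact that $S$-local fibrant objects are $\Omega$-spectra recovers the stated homotopy pullback square for every $V, W \in \J_1$. The main obstacle is the identification of $S$-local equivalences with $\pi_*$-isomorphisms; everything else is a formal consequence of Hirschhorn once this step is established. As an alternative route, one can bypass Bousfield localization entirely and apply Kan's transfer theorem to the adjunction $G_+ \wedge (-) \dashv U$, transferring the stable model structure on $\s$ to $\s[G]$; the transfer hypotheses are easy to verify, since $U$ preserves all colimits and the generating acyclic cofibrations of the stable structure on $\s$ remain weak equivalences after applying $G_+ \wedge (-)$.
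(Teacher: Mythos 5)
The paper does not actually give a proof of this lemma; it is stated as a recalled standard fact (the stable model structure on naïve $G$-spectra, for which one would cite the non-equivariant stable model structures of \cite{MMSS01}, and for the unitary model, \cite{Ta19}). Your reconstruction is correct, and both of the routes you sketch work. A few remarks on the details. In the Bousfield-localization route, the identification of $S$-local objects via the adjunction $\Map^h_{\s[G]}(G_+ \wedge \lambda_{V,W}, \Theta) \simeq \Map^h_{\s}(\lambda_{V,W}, U\Theta)$ is fine (the $\lambda_{V,W}$ are maps of cofibrants and $G_+\wedge(-)$ is left Quillen), but the gap you flag --- that $S$-local equivalences coincide with $\pi_*$-isomorphisms --- is real and requires a bit of work: the ``only if'' direction uses the right adjoint $F(G_+,-)$ to produce $S$-local test objects from $\Omega$-spectra in $\s$, while the ``if'' direction needs the standard cofiber-sequence argument that $\pi_*$-isomorphisms are $\Omega$-spectrum-local equivalences, applied equivariantly. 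Also, the parenthetical ``since $G$ is a compact Lie group'' is not the reason $U$ has both adjoints: $G_+\wedge(-)$ and $F(G_+,-)$ exist for any topological group since $\s$ is bicomplete and enriched in $\T$; compactness only enters when one wants $G_+\wedge(-)$ to preserve $\pi_*$-isomorphisms, which it does because $G_+$ is a finite CW complex. The transfer-theorem route is cleaner precisely because it bypasses the $S$-local-equivalence identification: the transferred weak equivalences and fibrations are by definition the maps $f$ with $Uf$ a stable equivalence or stable fibration in $\s$, which are exactly the $\pi_*$-isomorphisms and the levelwise fibrations satisfying the stated homotopy pullback condition. The one hypothesis that needs a word --- that $U$ carries relative $(G_+\wedge J)$-cell complexes to weak equivalences --- holds because $U(G_+\wedge j) = G_+\wedge j$ is an acyclic cofibration in $\s$ (smashing with the finite CW complex $G_+$ preserves cofibrations and stable equivalences) and these are closed under pushout and transfinite composition. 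Topologicality of the transferred structure then follows by checking the pushout-product axiom against the generating (acyclic) cofibrations, which again reduces to $\s$ by applying $U$.
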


The $\pi_*$-isomorphism rely on the model of spectra. For example, they are defined as 
\[
\pi_k(\Theta) = \colim_{q} \pi_{k+q}(\Theta(\R^q))
\]
for orthogonal spectra, and 
\[
\pi_k(\Theta) = \colim_{q} \pi_{k+2q}(\Theta(\C^q))
\]
for unitary spectra, with the difference coming from the fact that the commutative monoid $\mathbb{S}$ for unitary spectra only takes values on even dimensional spheres. 

\subsection{Change of group}

Let $H$ be a subgroup of a compact Lie group $G$. Then given a spectrum (in any chosen model) with an action of $G$, we can restrict through the subgroup inclusion $\iota \colon H \to G$ to given the spectrum an action of $H$. 

\begin{definition}
For a spectrum $\Theta$ with an action of $G$, let $\iota^* \Theta$ be the same spectrum $\Theta$ with an action of $H$ formed by forgetting structure through $\iota$.
\end{definition}

In detail, let $\I_\F$ be the category of $\F$-inner product subspaces of $\F^\infty$ with $\F$-linear isometric isomorphisms. For a spectrum $\Theta$ with $G$-action, the evaluations maps 
\[
\Theta_{U,V}\colon \I_\F(U, V) \to \T(\Theta(U), \Theta(V))
\]
are $G$-equivariant. We can apply $\iota^*$ to this, to give a map which is $H$-equivariant by forgetting structure, 
\[
\iota^*\Theta_{U,V} \colon \iota^* \I_\F(U,V) \to \iota^*\T(\Theta(U), \Theta(V))= \T(\iota^*\Theta(U), \iota^*\Theta(V)).
\]

This functor has a left adjoint $G_+ \wedge_{H} - \colon \s[H] \to \s[G]$, given on an object $\Theta$ of $\s[H]$ by 
\[
(G_+\wedge_{H} \Theta)(V) = G_+ \wedge_{H} \Theta(V), 
\]
compare \cite[Proposition VI.2.3]{MM02}.

\begin{prop}\label{change of group QA}
The adjoint pair
\[
\adjunction{G_+ \wedge_{H} -}{\s[H]}{\s[G]}{\iota^*}
\]
is a Quillen adjunction. 
\end{prop}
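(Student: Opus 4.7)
The plan is to verify that the right adjoint $\iota^*$ preserves fibrations and acyclic fibrations in each of the two model structures on $\s[G]$ and $\s[H]$ introduced above. I would not bother with the cofibration/acyclic cofibration characterisation of the left adjoint directly, since the right adjoint is much more transparent to work with.

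The crucial point to exploit is that these are \emph{na\"ive} model structures: both the weak equivalences and the fibrations are characterised by conditions on the underlying non-equivariant spectrum, with no reference to the group action. In the projective model structure, weak equivalences are levelwise weak homotopy equivalences and fibrations are levelwise Serre fibrations. In the stable model structure, weak equivalences are $\pi_*$-isomorphisms, computed as $\colim_q \pi_{k+q}\Theta(\R^q)$ or $\colim_q \pi_{k+2q}\Theta(\C^q)$ depending on the variant, and the fibrations are those levelwise fibrations making the homotopy pullback square involving $\Omega^W\Theta(V\oplus W)$ a homotopy pullback. None of these conditions involves fixed points or otherwise sees the $G$- or $H$-action.

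Since $\iota^*$ leaves the underlying spectrum and the evaluation maps $\Theta_{U,V}$ untouched and merely restricts the $G$-action along $\iota$ to an $H$-action, a morphism $f$ in $\s[G]$ is a fibration (respectively, an acyclic fibration) in either model structure if and only if $\iota^*f$ is a fibration (respectively, an acyclic fibration) in the corresponding model structure on $\s[H]$. This immediately gives that $\iota^*$ preserves both classes, so the adjunction is Quillen.

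The only non-formal ingredient is confirming that the levelwise formula $(G_+\wedge_H\Theta)(V) = G_+\wedge_H\Theta(V)$ genuinely defines a functor landing in $\s[G]$ that is left adjoint to $\iota^*$, which reduces to applying the standard space-level $G_+\wedge_H(-) \dashv \iota^*$ adjunction in $\T$ at each level and observing that the induced structure maps assemble correctly from the $H$-equivariance of the structure maps of $\Theta$. I expect no substantive obstacle: the entire point of working with na\"ive rather than genuine equivariant model structures on $\s[G]$ is precisely that change-of-group functors become Quillen adjunctions essentially by definition.
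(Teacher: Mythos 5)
Your argument is correct and is essentially the paper's proof: the paper also observes that $\pi_*$-isomorphisms and fibrations in these na\"ive model structures are defined independently of the group action, so $\iota^*$ preserves (acyclic) fibrations and the adjunction is Quillen. You simply spell out this observation in more detail.
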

\begin{proof}
This follows immediately from noting that the $\pi_*$-isomorphisms and q-fibrations are defined independently of the group action.
\end{proof}

For our particular groups of interest, we achieve the following.

\begin{cor}
The adjoint pair
\[
\adjunction{\U(n)_+ \wedge_{\O(n)} -}{\s[\O(n)]}{\s[\U(n)]}{\iota^*}
\]
is a Quillen adjunction. 
\end{cor}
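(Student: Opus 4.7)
The plan is to invoke Proposition \ref{change of group QA} directly, with compact Lie group $G = \U(n)$ and closed subgroup $H = \O(n)$. The relevant inclusion $\iota \colon \O(n) \hookrightarrow \U(n)$ is the standard one: a real orthogonal matrix is viewed as a complex unitary matrix via the identification already used in Section \ref{section: input functors} when setting up complexification (the matrices representing $T$ and $\C \otimes T$ coincide). Since $\O(n)$ is manifestly a closed subgroup of the compact Lie group $\U(n)$, the hypotheses of Proposition \ref{change of group QA} are satisfied.

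Applying that proposition then yields a Quillen adjunction between $\s[\O(n)]$ and $\s[\U(n)]$ with left adjoint $\U(n)_+ \wedge_{\O(n)} (-)$ and right adjoint $\iota^*$, which is exactly the claim. There is essentially no obstacle here; the result is recorded as a separate corollary because this particular change-of-group adjunction, together with the analogous one for $\U(n) \hookrightarrow \O(2n)$, will play the structural role in the middle row of Figure \ref{fig 1}, where it must be composed with the pre-realification Quillen equivalence $r^* \colon \s^{\mathbf{O}}[\U(n)] \to \s^{\mathbf{U}}[\U(n)]$ of \cite[Theorem 6.4]{Ta19}. Recording the corollary explicitly isolates the group-theoretic ingredient from the model-of-spectra ingredient.
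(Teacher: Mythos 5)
Your proposal is exactly the paper's route: the corollary is stated as an immediate specialization of Proposition \ref{change of group QA} with $G = \U(n)$, $H = \O(n)$, and the standard inclusion $\iota$, and the paper records it without a separate proof. Your additional remarks about why it is isolated as a corollary are reasonable commentary but not part of the argument.
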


\begin{cor}
The adjoint pair
\[
\adjunction{\O(2n)_+ \wedge_{\U(n)} -}{\s[\U(n)]}{\s[\O(2n)]}{\kappa^*}
\]
is a Quillen adjunction. 
\end{cor}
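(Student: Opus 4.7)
The plan is to apply Proposition \ref{change of group QA} directly, specialising to $H = \U(n)$ and $G = \O(2n)$. First I would identify the subgroup inclusion $\kappa \colon \U(n) \hookrightarrow \O(2n)$ explicitly: as reviewed at the start of Section \ref{section: input functors}, the realification of a $\C$-linear isometry $T \colon \C^n \to \C^n$ is an $\R$-linear isometry $T_\R \colon \R^{2n} \to \R^{2n}$ (the real part of the Hermitian inner product is a real inner product on the realification). The assignment $T \longmapsto T_\R$ is a continuous injective group homomorphism, and this is the inclusion $\kappa$ used to define $\kappa^*$ in the statement.

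With $\kappa$ identified as a subgroup inclusion of compact Lie groups, $\kappa^*$ is precisely the restriction-along-$\kappa$ functor of Definition just before Proposition \ref{change of group QA}, and its left adjoint $\O(2n)_+ \wedge_{\U(n)} -$ is constructed pointwise in spectrum degree exactly as in the general framework. Therefore the corollary is an immediate specialisation of Proposition \ref{change of group QA}.

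There is no serious obstacle: the only mildly delicate point is ensuring that both sides are equipped with the stable model structure, whose $\pi_*$-isomorphisms and q-fibrations are defined independently of the group action (as noted in the proof of Proposition \ref{change of group QA}), so that $\kappa^*$ automatically preserves fibrations and acyclic fibrations. Once this is observed, the proof is a single line: apply Proposition \ref{change of group QA} with $H = \U(n)$, $G = \O(2n)$ and $\iota = \kappa$.
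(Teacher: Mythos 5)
Your proposal is correct and is exactly how the paper treats this statement: the corollary is an immediate specialisation of Proposition \ref{change of group QA} with $H = \U(n)$, $G = \O(2n)$, and $\iota = \kappa$, and the paper gives no further argument beyond this. Your additional remarks identifying $\kappa$ as realification of unitary matrices and noting that the model structures are defined independently of group actions are accurate but not strictly needed, since they are already contained in the setup and proof of the cited proposition.
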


\subsection{Change of model} There is also a change of model subtly involved in the theory. This was proven in \cite{Ta19}, where the author produced a Quillen equivalence between orthogonal and unitary spectra. 

\begin{prop}[{\cite[Theorem 6.4]{Ta19}}]
The adjoint pair
\[
\adjunction{r_!}{\s^\mathbf{U}}{\s^\mathbf{O}}{r^*}
\]
is a Quillen equivalence. 
\end{prop}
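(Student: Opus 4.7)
The plan is to proceed in three phases: establish the Quillen adjunction, check the derived unit on a set of generators, and promote to arbitrary cofibrant objects via a cellular induction.

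For the Quillen adjunction, it suffices to show that $r^*$ preserves $\pi_*$-isomorphisms and stable fibrations. The key input is a cofinality observation: for $\Theta \in \s^\mathbf{O}$,
\[
\pi_k(r^*\Theta) = \colim_q \pi_{k+2q}\Theta(\R^{2q}),
\]
which runs through the cofinal subsequence of even-indexed terms of the diagram defining the orthogonal homotopy group $\pi_k(\Theta) = \colim_q \pi_{k+q}\Theta(\R^q)$. Cofinality then gives $\pi_k(r^*\Theta) \cong \pi_k(\Theta)$, so $r^*$ both preserves and reflects $\pi_*$-isomorphisms. Preservation of stable fibrations follows from the characterization in terms of homotopy pullback squares of loop-space diagrams, since these squares are computed levelwise and are preserved under restriction along $r$.

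For the derived unit on generators, I would show that $X \to r^* r_! X$ is a $\pi_*$-isomorphism on the free unitary spectra $F_W^\mathbf{U} = \J_1^\mathbf{U}(W,-)_+$. The universal property of left Kan extensions gives $r_! F_W^\mathbf{U} \cong F_{W_\R}^\mathbf{O}$, and both objects represent shifts of the sphere spectrum: a direct computation yields $\pi_k(F_W^\mathbf{U}) \cong \pi_{k+2\dim_\C W}(\mathbb{S})$ and $\pi_k(F_{W_\R}^\mathbf{O}) \cong \pi_{k+\dim_\R W_\R}(\mathbb{S})$, which agree since $\dim_\R W_\R = 2\dim_\C W$. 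The unit is induced by the inclusion of complex Stiefel manifolds into real Stiefel manifolds, and one must verify that this geometric inclusion realizes the claimed stable equivalence rather than merely witnessing an abstract coincidence of homotopy groups. Extending from the generators to arbitrary cofibrant objects is then standard cellular bookkeeping: as a left adjoint $r_!$ preserves pushouts, coproducts, and transfinite compositions of cofibrations, $\pi_*$-isomorphisms of spectra are closed under these operations, and since $r^*$ preserves all $\pi_*$-isomorphisms, no fibrant replacement is required before applying it.

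The main obstacle is the generator computation. While the abstract $\pi_*$-calculation is a clean dimension count matching $\dim_\R W_\R = 2\dim_\C W$, one must check carefully that the structure maps of the adjunction unit actually realize the claimed stable equivalence and not some self-map of the sphere spectrum that happens to agree on stable homotopy. This ultimately rests on the geometric compatibility of the inclusion of complex into real Stiefel manifolds with the suspension structure maps of the two types of free spectra, which is where the combinatorial heart of the proof lives.
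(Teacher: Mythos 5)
Your outline is sound and the first phase is essentially complete: the cofinality argument correctly shows that $r^*$ both preserves and \emph{reflects} $\pi_*$-isomorphisms (the structure map of $r^*\Theta$ is the twofold composite of that of $\Theta$, so $\colim_q\pi_{k+2q}\Theta(\R^{2q})$ is the colimit over the even-indexed cofinal subsequence), and the observation that the levelwise-computed homotopy-pullback condition characterising stable fibrations is preserved under restriction along $r$ is also right. Likewise $r_!F_W^\mathbf{U}\cong F_{W_\R}^\mathbf{O}$ follows by passing to left adjoints in $\Ev_W^\mathbf{U}\circ r^*=\Ev_{W_\R}^\mathbf{O}$, and your remark that fibrant replacement in the derived unit can be dropped because $r^*$ preserves all $\pi_*$-isomorphisms is exactly the point that makes the derived unit tractable.

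The genuine gap is the one you flag yourself and then leave open. Computing $\pi_*$ of both sides of the unit $F_W^\mathbf{U}\to r^*F_{W_\R}^\mathbf{O}$ and noting they are abstractly isomorphic proves nothing about the unit; the whole substance of the claim is that the specific map of Thom spaces $\J_1^\mathbf{U}(W,V)\to\J_1^\mathbf{O}(W_\R,V_\R)$ realises a stable equivalence, and you have not verified this compatibility with the suspension structure maps. As it stands the argument is incomplete at its centre, not merely terse. One route that shrinks the burden: since $r^*$ creates $\pi_*$-isomorphisms, it suffices for the derived unit to be invertible on a set of compact generators, because the full subcategory on which a natural transformation of exact coproduct-preserving endofunctors of a triangulated category is an isomorphism is localising; if one knows that $\mathbb{S}^\mathbf{U}=F_0^\mathbf{U}$ alone compactly generates $\Ho(\s^\mathbf{U})$, then the only case to check is $W=0$, where $(r^*\mathbb{S}^\mathbf{O})(\C^q)=S^{\R^{2q}}=\mathbb{S}^\mathbf{U}(\C^q)$ and the unit is the identity at every level. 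This trades your per-$W$ unit computation for the generation statement that each $F_W^\mathbf{U}$ lies in the localising subcategory generated by $\mathbb{S}^\mathbf{U}$, i.e.\ is a shift of the sphere; that is a purely unitary-internal fact and arguably cleaner than analysing the realification map on Stiefel manifolds directly, but it still requires the kind of Thom-space input you correctly identify as the heart of the matter.
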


In \cite[Corollary 6.5]{Ta19}, the author applied the above Quillen equivalence to a Quillen equivalence between the categories of $\U(n)$-objects in both models for the stable homotopy category. The same is true - with analogous proof - for $\O(n)$-objects in both model for the stable homotopy category. 

\begin{cor}
The adjoint pair 
\[
\adjunction{r_!}{\s^\mathbf{U}[\O(n)]}{\s^\mathbf{O}[\O(n)]}{r^*}
\]
is a Quillen equivalence.
\end{cor}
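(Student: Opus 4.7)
The plan is to reduce to the non-equivariant Quillen equivalence $(r_!, r^*) \colon \s^\mathbf{U} \rightleftarrows \s^\mathbf{O}$ established as \cite[Theorem 6.4]{Ta19}, mirroring the strategy the author indicates was used for the $\U(n)$-equivariant version in \cite[Corollary 6.5]{Ta19}. The key structural fact is the one already emphasised in the paragraph preceding the corollary: both the weak equivalences ($\pi_*$-isomorphisms) and the fibrations of the stable model structure on $\s[G]$ are \emph{created} by the forgetful functor $U \colon \s[G] \to \s$, since the homotopy-pullback condition defining stable fibrations and the colimit defining $\pi_*$ are both formulated levelwise with no reference to the $G$-action. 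In particular, $U$ preserves and reflects stable equivalences and stable fibrations.

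First I would verify that $r^*$ and $r_!$ commute with the forgetful functors in the sense that $U \circ r^* = r^* \circ U$ (obvious, since $r^*$ just precomposes with realification levelwise and the $\O(n)$-action is transported by naturality) and $U \circ r_! \cong r_! \circ U$ (the left Kan extension along $r$ is computed levelwise by a coend, which commutes with the forgetful functor, and the residual $\O(n)$-action on $r_!X$ is induced from the $\O(n)$-action on $X$). From this, preservation of fibrations and acyclic fibrations by the equivariant $r^*$ is immediate from the non-equivariant case, giving the Quillen adjunction.

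For the Quillen equivalence statement, I would check that for a cofibrant $X \in \s^\mathbf{U}[\O(n)]$ and a fibrant $Y \in \s^\mathbf{O}[\O(n)]$, the derived unit $X \to r^* r_! X$ and derived counit $r_! r^* Y \to Y$ are weak equivalences. Applying $U$ and invoking the previous compatibility, these become the derived unit and counit of the non-equivariant adjunction applied to $UX$ and $UY$, which are weak equivalences by \cite[Theorem 6.4]{Ta19}. Since $U$ reflects weak equivalences, we conclude. The only mild subtlety, and what I expect to be the closest thing to an obstacle, is checking that cofibrant objects (respectively fibrant objects) in $\s^\mathbf{U}[\O(n)]$ have underlying cofibrant (respectively fibrant) objects in $\s^\mathbf{U}$; fibrancy is clear because fibrations are created from $\s$, while for cofibrancy one uses that the generating cofibrations of $\s[\O(n)]$ are built from those of $\s$ smashed with $\O(n)_+$, so cofibrancy is preserved under $U$ up to a retract argument — in any case this is exactly the same formal step that appears in the proof of the cited $\U(n)$-version, so it carries over verbatim.
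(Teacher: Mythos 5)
Your argument is correct and is essentially the same approach the paper implicitly relies on: the paper does not write out a proof but simply observes that the proof of \cite[Corollary~6.5]{Ta19} (the $\U(n)$-equivariant case) carries over verbatim to $\O(n)$-objects, and that cited proof is exactly the reduction to the non-equivariant Quillen equivalence of \cite[Theorem~6.4]{Ta19} via the forgetful functor creating stable equivalences and fibrations, together with the compatibility of $r_!$, $r^*$ with forgetting the group action. Your handling of the one mild subtlety (cofibrant objects of $\s^\mathbf{U}[\O(n)]$ have cofibrant underlying spectra, via the retract/cell-complex argument from the generating cofibrations $\O(n)_+\wedge i$) is the same point that makes the cited proof go through, so the two arguments agree.
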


The change of group and change of model are compatible in the following sense. Let $H \leq G$ act as notation for either the subgroup inclusion $\O(n) \leq \U(n)$ or the inclusion $\U(n) \leq \O(2n)$. 

\begin{lem}\label{lem: spectra and change of group}
The diagram
\[
\xymatrix@C+1cm{
\s^\mathbf{O}[H] 	\ar@<1ex>[r]^{G_+ \wedge_{H}(-)}	\ar@<1ex>[d]^{r^*}	&		\s^\mathbf{O}[G] 	\ar@<1ex>[l]^{\iota^*}	\ar@<1ex>[d]^{r^*}\\
\s^\mathbf{U}[H]	\ar@<1ex>[r]^{G_+ \wedge_{H}(-)}	\ar@<1ex>[u]^{r_!}		&		\s^\mathbf{U}[G]	\ar@<1ex>[l]^{\iota^*}	\ar@<1ex>[u]^{r_!}
}
\]
commutes up to natural isomorphism.
\end{lem}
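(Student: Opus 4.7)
The plan is to observe that the two right adjoints in the square, namely $r^*$ and $\iota^*$, act on completely disjoint pieces of data: $r^*$ only changes the indexing category (via precomposition with realification $r\colon \J_1^\mathbf{U}\to \J_1^\mathbf{O}$), while $\iota^*$ only changes the group action (via the subgroup inclusion $\iota\colon H\hookrightarrow G$), leaving the underlying continuous functor on $\J_1^\mathbf{O}$ untouched. Therefore the commutation of right adjoints should hold on the nose, and the commutation of left adjoints will then follow formally by uniqueness of adjoints.

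First I would unwind the definitions for a spectrum $\Theta\in\s^\mathbf{O}[G]$: the composite $(r^*\iota^*\Theta)(V)$ is simply $\Theta(V_\R)$ equipped with the $H$-action obtained by restriction along $\iota$, and the composite $(\iota^*r^*\Theta)(V)$ is also $\Theta(V_\R)$ with the $H$-action obtained by restriction along $\iota$. Since the realification of $V\in\J_1^\mathbf{U}$ does not interact with any action on $\Theta$, and the restriction of the group action does not interact with the value of $\Theta$ at any given inner product space, these two functors agree on objects; the same inspection shows they agree on morphisms, so in fact $r^*\circ\iota^* = \iota^*\circ r^*$ as functors $\s^\mathbf{O}[G]\to \s^\mathbf{U}[H]$.

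For the left adjoints, I would then argue formally. The functor $r_!\circ(G_+\wedge_H -)$ is a composite of left adjoints (by Proposition~\ref{change of group QA} and \cite[Theorem 6.4]{Ta19}), so it is left adjoint to $\iota^*\circ r^*$. Similarly $(G_+\wedge_H -)\circ r_!$ is left adjoint to $r^*\circ\iota^*$. Since $r^*\iota^* = \iota^*r^*$ from the previous step, both composites are left adjoint to the same functor, and hence are naturally isomorphic by the uniqueness of left adjoints. The commutativity of the two ``mixed'' squares (one left adjoint and one right adjoint) follows by the same standard adjunction calculus, using the units and counits to produce the required natural isomorphism.

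The proof is essentially a bookkeeping exercise: the main thing to get right is the observation that the indexing-category change $r^*$ and the group-action change $\iota^*$ act on truly orthogonal pieces of structure in the definition of an object of $\s^\mathbf{F}[K]$, so that the right-adjoint square commutes strictly. No model-categorical input is needed beyond the existence of the relevant adjunctions; in particular, no cofibrant replacement or derived functor argument is required since the commutation takes place at the point-set level.
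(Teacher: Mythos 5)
Your proposal is correct and follows essentially the same route as the paper: reduce to showing the square of right adjoints commutes strictly, which is the point-set computation $\iota^*((r^*\Theta)(V)) = \iota^*\Theta(V_\R) = (\iota^*\Theta)(V_\R) = r^*((\iota^*\Theta)(V))$, and then observe that commutation of the left adjoints follows formally by uniqueness of adjoints. The paper states the reduction in one sentence (``It suffices to show that the diagram of right adjoints commute'') and gives exactly this computation; your extra paragraph about the two operations acting on ``orthogonal pieces of structure'' is just a more verbose articulation of the same identity, and your closing remark about ``mixed squares'' is not needed, since for a square of Quillen adjunctions ``commutes up to natural isomorphism'' means precisely that the right (equivalently, left) adjoints commute.
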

\begin{proof}
It suffices to show that the diagram of right adjoints commute. Indeed, 
\[
\iota^*((r^*\Theta)(V)) = \iota^* \Theta(V_\R) = (\iota^*\Theta)(V_\R) = r^*((\iota^*\Theta)(V)). \qedhere
\]
\end{proof}

\section{Comparing the intermediate categories}\label{section: intermediate categories}
To achieve the correct correspondences between $\U(n)\E_n^\mathbf{U}$ and $\O(n)\E_n^\mathbf{O}$ we introduce two new intermediate categories via the inclusion of subgroups $\iota\colon \O(n) \hookrightarrow \U(n)$ and $\kappa\colon \U(n) \hookrightarrow \O(2n)$. In our consideration of the comparisons between the categories of spectra, the order in which we changed the group and changed the model was unimportant, since the indexing categories, $\J_1^\mathbf{O}$ and $\J_1^\mathbf{U}$, are equipped with the trivial action. However, for the intermediate categories, the diagram categories $\J_n$ have a non-trivial action of $\Aut(n)$, hence the order in which one changes group and changes model is important. This section gives the correct method for such comparisons. 

\begin{definition}
Define $\O(n)\E_n^\mathbf{U}$ to be the category of $\O(n)\T$-enriched functors from $\iota^*\J_n^\mathbf{U}$ to $\O(n)\T$ where $\iota^*\J_n^\mathbf{U}$ is an $\O(n)\T$-enriched category obtained from $\J_n^\mathbf{U}$ by forgetting structure through the subgroup inclusion $\iota\colon \O(n) \to \U(n)$. Similarly define $\U(n)\E_{2n}^\mathbf{O}$ to be the category of $\U(n)\T$-enriched functors from $\kappa^*\J_{2n}^\mathbf{O}$ to $\U(n)\T$ where $\kappa^*\J_{2n}^\mathbf{O}$ is an $\U(n)\T$-enriched category obtained from $\J_{2n}^\mathbf{O}$ by forgetting structure through the subgroup inclusion $\kappa\colon \U(n) \to \O(2n)$. 
\end{definition}

These categories also come with projective and stable model structures constructed analogously to those of Proposition \ref{prop: n-stable model structure}. These new intermediate categories will now act as intermediate categories between the standard intermediate categories of orthogonal and unitary calculus. Further, the new intermediate categories equipped with their $n$-stable model structure are Quillen equivalent to spectra with an appropriate group action. The proofs of the following two results follow similarly to \cite[Proposition 8.3]{BO13} and \cite[Theorem 6.8]{Ta19}.

\begin{prop}
There is a Quillen equivalence 
\[	\adjunction{(\gamma_n)_!}{\O(n)\E_n^\mathbf{U}}{\s^\mathbf{U}[\O(n)]}{(\gamma_n)^*}
\]
with $(\gamma_n)^* \Theta (V) = \Theta(\C^n \otimes_\C V)$, and $(\gamma_n)_!$ is the left Kan extension along $\gamma_n$. 
\end{prop}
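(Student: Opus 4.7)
The plan is to follow closely the strategy of \cite[Theorem 5.8]{Ta19} (for the analogous statement with full $\U(n)$-equivariance) and \cite[Proposition 8.3]{BO13}, with the modification that we now work with the $\O(n)$-action obtained by restricting the natural $\U(n)$-action on $\J_n^\mathbf{U}$ through $\iota$. First I would verify that the assignment $V \mapsto \C^n \otimes_\C V$ on objects together with $(f,x) \mapsto (\C^n \otimes_\C f, x)$ on morphisms, using the canonical identification $\C^n \otimes_\C f(U)^\perp \cong (\C^n \otimes_\C f)(\C^n \otimes_\C U)^\perp$, defines a continuous functor $\gamma_n \colon \iota^*\J_n^\mathbf{U} \to \J_1^\mathbf{U}$ (viewing the target as having trivial $\O(n)$-action). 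This is precisely $\alpha_n$ of \cite{Ta19} with equivariance restricted through $\iota$, so continuity and functoriality are inherited, and the adjunction $(\gamma_n)_! \dashv (\gamma_n)^*$ is then the standard left-Kan-extension/precomposition pair.

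Second, I would verify that the adjunction is Quillen. Since $(\gamma_n)^*$ is precomposition with $\gamma_n$, it preserves levelwise (acyclic) fibrations immediately. For the homotopy pullback characterising the stable fibrations of Proposition \ref{prop: n-stable model structure}, the square for $(\gamma_n)^*\Theta$ at $(V, W)$ is pulled back from the stable fibration square for $\Theta$ at $(\C^n \otimes_\C V,\, \C^n \otimes_\C W)$, and the identification
\[
\Omega^{nW}\bigl((\gamma_n)^*\Theta(V \oplus W)\bigr) \cong \Omega^{\C^n \otimes_\C W} \Theta\bigl(\C^n \otimes_\C V \oplus \C^n \otimes_\C W\bigr)
\]
supplies the correct loop coordinate, so stable fibrations are sent to $n$-stable fibrations.

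For the Quillen equivalence, the key ingredient is the cofinality isomorphism
\[
n\pi_k((\gamma_n)^*\Theta) = \colim_q \pi_{k+2nq}\Theta(\C^{nq}) \;\cong\; \colim_p \pi_{k+2p}\Theta(\C^p) = \pi_k(\Theta),
\]
valid because $\{nq\}_{q \geq 0}$ is cofinal in $\{p\}_{p \geq 0}$ and the structure maps of $\Theta$ supply the required connecting maps. This shows that $(\gamma_n)^*$ creates weak equivalences between fibrant objects. Combined with a direct computation of the derived unit on the generating cofibrant objects $\J_n^\mathbf{U}(U,-) \wedge \O(n)_+$, which $(\gamma_n)_!$ sends to the expected shifts of the sphere spectrum with free $\O(n)$-action, the Quillen equivalence follows from the standard criterion: derived unit a weak equivalence on generating cofibrants plus right adjoint creating weak equivalences between fibrants. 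The main obstacle is the careful bookkeeping of equivariance, in particular checking that the $\O(n)$-action inherited on $\J_n^\mathbf{U}$ through $\iota$ interacts coherently with the $\O(n)$-action present on values of objects in $\s^\mathbf{U}[\O(n)]$; since \cite{Ta19} established this compatibility for the full $\U(n)$-action, restriction to the subgroup $\O(n)$ follows by naturality and no new structural obstacles arise, but the verification must be done explicitly on the Thom-space description of morphisms in $\J_n^\mathbf{U}$.
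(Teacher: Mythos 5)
Your proposal is correct and follows essentially the same route the paper intends: the paper supplies no proof beyond the remark that it "follows similarly to [BO13, Proposition 8.3] and [Ta19, Theorem 6.8]," i.e.\ that one should run the Barnes--Oman/Taggart argument with the group action on $\J_n^{\mathbf{U}}$ restricted along $\iota\colon \O(n)\hookrightarrow \U(n)$, and that is precisely what you do. Your three steps (functoriality of $\gamma_n = \iota^*\alpha_n$, precomposition is right Quillen for the levelwise and stable model structures, and cofinality of $\{nq\}\subseteq\{p\}$ to identify $n\pi_*((\gamma_n)^*\Theta)$ with $\pi_*(\Theta)$) are the content of those cited proofs; the $\O(n)$-equivariance is indeed a non-issue because both the levelwise/stable fibrations and the $(n)\pi_*$-isomorphisms are defined independently of the group action, as the paper itself stresses when establishing the change-of-group Quillen adjunctions. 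One small remark worth flagging: you write the formula $n\pi_k((\gamma_n)^*\Theta) = \colim_q \pi_{k+2nq}\Theta(\C^{nq})$, which uses $n\pi_k(Y) = \colim_q \pi_{k+2nq}Y(\C^q)$; the paper's displayed formula reads $\colim_q \pi_{k+2q}Y(\C^q)$, but a check against the structure maps $S^{\C^n}\wedge Y(\C^q)\to Y(\C^{q+1})$ shows your version (with the factor of $n$ in the exponent) is the right one, and your cofinality argument requires it. Your invocation of the "derived unit on generating cofibrants plus right adjoint detects weak equivalences between fibrants" criterion is fine in outline but implicitly uses the standard cell-induction reduction from all cofibrants to generating cofibrants; this is uncontroversial in this diagram-spectra setting and is how [BO13, Proposition 8.3] proceeds, but in a fully written-out proof it should be stated.
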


\begin{prop}
	There is a Quillen equivalence 
	\[	\adjunction{(\delta_{2n})_!}{\U(n)\E_{2n}^\mathbf{O}}{\s^\mathbf{O}[\U(n)]}{(\delta_{2n})^*}
	\]
	with $(\delta_{2n})^* \Theta (V) = \Theta(\R^{2n} \otimes_\R V)$, and $(\delta_{2n})_!$ is the left Kan extension along $\delta_{2n}$. 
\end{prop}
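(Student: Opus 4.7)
The plan is to model the proof on \cite[Proposition 8.3]{BO13} (with its unitary counterpart \cite[Theorem 6.8]{Ta19}), adapting from the standard $\O(2n)$-action on $\J_{2n}^\mathbf{O}$ to the restricted $\U(n)$-action via $\kappa \colon \U(n) \hookrightarrow \O(2n)$.

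First I would verify that $\delta_{2n} \colon \kappa^*\J_{2n}^\mathbf{O} \to \J_1^\mathbf{O}$ sending $V \mapsto \R^{2n} \otimes_\R V$ is a well-defined $\U(n)\T$-enriched functor. On morphisms, a point $(f,x) \in \J_{2n}^\mathbf{O}(V,W)$ with $x \in \R^{2n} \otimes_\R f(V)^\perp$ is sent to the pair $(\R^{2n}\otimes f, x)$, which lies in $\J_1^\mathbf{O}(\R^{2n}\otimes V, \R^{2n}\otimes W)$ since the complement of $\R^{2n}\otimes f$ is $\R^{2n}\otimes f(V)^\perp$. The canonical $\O(2n)$-action on the Thom space $\J_{2n}^\mathbf{O}(V,W)$ restricts via $\kappa$ to a $\U(n)$-action which $\delta_{2n}$ intertwines with the $\U(n)$-action on the target induced from the $\O(2n)$-action on $\R^{2n}$. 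The adjunction $(\delta_{2n})_! \dashv (\delta_{2n})^*$ is then formal from the definition of left Kan extension. The Quillen adjunction property is straightforward: $(\delta_{2n})^*$ is defined levelwise, so preserves both levelwise fibrations and levelwise weak equivalences; and it preserves the homotopy-pullback condition defining fibrations in Proposition \ref{prop: n-stable model structure}, using the isomorphism $\R^{2n} \otimes_\R (V \oplus W) \cong (\R^{2n}\otimes_\R V) \oplus (\R^{2n}\otimes_\R W)$.

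The main obstacle is to establish that this Quillen adjunction is a Quillen equivalence. The argument proceeds exactly as in \cite[Proposition 8.3]{BO13}: one reduces to checking the derived unit on cofibrant generators and the derived counit on fibrant objects. On a cofibrant generator of the form $\U(n)_+ \wedge \J_{2n}^\mathbf{O}(V,-) \wedge i$, a direct computation identifies $(\delta_{2n})_!(\U(n)_+ \wedge \J_{2n}^\mathbf{O}(V,-)\wedge i) \cong \U(n)_+ \wedge \J_1^\mathbf{O}(\R^{2n}\otimes V,-)\wedge i$, and the derived unit becomes the stabilisation map, which is a $2n\pi_*$-isomorphism by definition of the stable model structure. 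For the derived counit on a fibrant $\Theta \in \s^\mathbf{O}[\U(n)]$, one analyses the coend computing $(\delta_{2n})_!(\delta_{2n})^*\Theta$ and identifies it with $\Theta$ up to $\pi_*$-isomorphism, exactly as in the reference.

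At every step the only change from \cite[Proposition 8.3]{BO13} is that we keep track of a $\U(n)$-action in place of the full $\O(2n)$-action; since the stable weak equivalences and fibrations on both $\U(n)\E_{2n}^\mathbf{O}$ and $\s^\mathbf{O}[\U(n)]$ are defined independently of the group action and are created by the forgetful functors to $\E_{2n}^\mathbf{O}$ and $\s^\mathbf{O}$ respectively, and since $\delta_{2n}$ is $\U(n)$-equivariant so that the coend formula for $(\delta_{2n})_!$ is manifestly $\U(n)$-equivariant, the underlying homotopical argument is unaffected and the equivalence follows.
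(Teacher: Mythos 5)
Your proposal is correct and takes essentially the same route as the paper: the paper itself gives no proof here but simply remarks that these two propositions ``follow similarly to [Proposition 8.3]{BO13} and [Theorem 6.8]{Ta19},'' which is precisely the strategy you carry out. Your observation that the stable weak equivalences and fibrations on both $\U(n)\E_{2n}^\mathbf{O}$ and $\s^\mathbf{O}[\U(n)]$ are created on underlying objects independently of the $\U(n)$-action, so that replacing the $\O(2n)$-action by the restricted $\U(n)$-action leaves the homotopical argument of Barnes--Oman unchanged, is exactly the point the paper is implicitly relying on.
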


\subsection{Change of group}
Let $E \in \U(n)\E_n^\mathbf{U}$, then $E$ is defined by $\U(n)$-equivariant structure maps of the form
\[
E_{U,V} \colon \J_n^\mathbf{U}(U,V) \to \T(E(U), E(V)).
\]
Forgetting structure through $\iota \colon\O(n) \to \U(n)$ yields an $\O(n)$-equivariant map 
\[
\iota^*E_{U,V} \colon \iota^*\J_n^\mathbf{U}(U,V) \to \iota^*\T(E(U), E(V))= \T(\iota^*E(U), \iota^* E(V)).
\]
This induces a functor $\iota^*\colon \U(n)\E_n^\mathbf{U} \to \O(n)\E_n^\mathbf{U}$, which has a left adjoint $\U(n)_+ \wedge_{\O(n)}(-)$ given by 
\[
(\U(n)_+ \wedge_{\O(n)} E)(V) = \U(n)_+ \wedge_{\O(n)} E(V),
\]
with structure maps 
\[
S^{nW} \wedge \U(n)_+ \wedge_{\O(n)} E(V) \cong \U(n)_+ \wedge_{\O(n)} (\iota^*S^{nW} \wedge E(V)) \to \U(n)_+ \wedge E(V \oplus W),
\]
where the isomorphism follows from \cite[Proposition V.2.3]{MM02}.

Completely analogous constructions for the subgroup inclusion $i\colon \U(n) \to \O(2n)$ yields an adjoint pair
\[
\adjunction{\O(2n)_+ \wedge_{\U(n)}(-)}{\U(n)\E_{2n}^\mathbf{O}}{\O(2n)\E_{2n}^\mathbf{O}}{\kappa^*}.
\]

\begin{lem}
The adjoint pair
\[
\adjunction{\U(n)_+ \wedge_{\O(n)}(-)}{\O(n)\E_{n}^\mathbf{U}}{\U(n)\E_{n}^\mathbf{U}}{\iota^*}.
\]
is a Quillen adjunction.
\end{lem}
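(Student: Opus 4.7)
The plan is to show that the right adjoint $\iota^*\colon \U(n)\E_n^\mathbf{U} \to \O(n)\E_n^\mathbf{U}$ preserves fibrations and acyclic fibrations in the $n$-stable model structures; this is enough to conclude the adjoint pair is a Quillen adjunction. The essential observation, directly parallel to the proof of Proposition \ref{change of group QA} for spectra, is that both the $n\pi_*$-isomorphisms and the $n$-stable fibrations, as described in Proposition \ref{prop: n-stable model structure}, are specified purely in terms of the underlying based spaces $X(V)$ and structure maps, with no reference to how $\O(n)$ or $\U(n)$ acts. The categories $\iota^*\J_n^\mathbf{U}$ and $\J_n^\mathbf{U}$ have the same objects and morphism spaces, so $\iota^* X$ has identical underlying values $X(V)$ to those of $X$, with only the action on each space restricted through $\iota$.

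First I would unpack the weak equivalence part: since
\[
n\pi_k(X) \;=\; \underset{q}{\colim}\ \pi_{k+2q} X(\C^q)
\]
depends only on the underlying spaces and underlying structure maps, we have $n\pi_*(\iota^*X) = n\pi_*(X)$, and hence $\iota^*$ preserves $n\pi_*$-isomorphisms. In particular $\iota^*$ preserves the weak equivalences of the $n$-stable model structure.

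Next, for a fibration $f\colon X \to Y$ in $\U(n)\E_n^\mathbf{U}$, the requirement from Proposition \ref{prop: n-stable model structure} is that $f$ is a levelwise Serre fibration and that the square
\[
\xymatrix{
X(V) \ar[r] \ar[d] & \Omega^{nW}X(V \oplus W) \ar[d] \\
Y(V) \ar[r] & \Omega^{nW}Y(V \oplus W)
}
\]
is a homotopy pullback of based spaces for every $V, W$. Neither condition changes upon forgetting the $\U(n)$-action to an $\O(n)$-action, so $\iota^*f$ is again a levelwise fibration satisfying the same homotopy pullback property, viewed as a morphism in $\O(n)\E_n^\mathbf{U}$; hence $\iota^*$ preserves fibrations. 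Preservation of acyclic fibrations then follows by combining the two statements.

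The main (and essentially only) obstacle is to confirm that the $n$-stable model structure on the new category $\O(n)\E_n^\mathbf{U}$ is specified by the same underlying-space conditions as Proposition \ref{prop: n-stable model structure}; this is exactly what the construction mirroring \cite[Proposition 8.3]{BO13} and \cite[Theorem 6.8]{Ta19}, invoked immediately prior to the statement, provides. Once that is in hand, both preservation claims are immediate from the definitions, and the Quillen adjunction follows.
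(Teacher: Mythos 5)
Your proof is correct and takes essentially the same approach as the paper: the key observation in both is that the $n\pi_*$-isomorphisms and the ($n$-stable) fibrations are specified in terms of underlying spaces only, with no reference to the group action, so $\iota^*$ preserves them. The paper's proof is just a terser statement of exactly this.
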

\begin{proof}
The levelwise fibrations, levelwise weak equivalences and $n\pi_*$-isomorphisms are defined independently of group actions. It follows that $\iota^*$ preserves these. 
\end{proof}

This results in a square of adjoint functors,
\[
\xymatrix@C+1cm{
\s^\mathbf{U}[\O(n)]  \ar[r]<1.1ex>^{\U(n)_+ \wedge_{\O(n)} -} \ar[d]<1.1ex>^{(\gamma_n)^*} & \s^\mathbf{U}[\U(n)]  \ar[l]<1.1ex>^{\iota^*} \ar[d]<1.1ex>^{(\alpha_n)^*}\\
\O(n)\E_n^\mathbf{U} \ar[r]<1.1ex>^{\U(n)_+ \wedge_{\O(n)}-} \ar[u]<1.1ex>^{{(\gamma_n)}_!} & \U(n)\E_n^\mathbf{U} \ar[l]<1.1ex>^{\iota^*} \ar[u]<1.1ex>^{{(\alpha_n)}_!}.
}
\]

\begin{lem}\label{square 10 commutes}
The above diagram commutes up to natural isomorphism. 
\end{lem}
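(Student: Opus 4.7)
The plan is to prove the square commutes by verifying the commutation of the right adjoints up to natural isomorphism, and then transferring the result to the left adjoints via the uniqueness of adjoints. This mirrors the strategy used in Lemma \ref{lem: spectra and change of group} and should go through similarly here.

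First I would check that, for $\Theta \in \s^\mathbf{U}[\U(n)]$, the two composites $(\gamma_n)^* \iota^* \Theta$ and $\iota^* (\alpha_n)^* \Theta$ agree as objects of $\O(n)\E_n^\mathbf{U}$. By definition, both are the functor $V \mapsto \Theta(\C^n \otimes_\C V)$ on underlying spaces, so the content of the claim lies in checking that the $\O(n)$-equivariant structure maps coincide. The decisive observation is that $\alpha_n$ and $\gamma_n$ have the same underlying formula $V \mapsto \C^n \otimes_\C V$ and differ only in their equivariance data: the $\U(n)$-action on $\C^n$ used by $\alpha_n$ restricts along $\iota \colon \O(n) \hookrightarrow \U(n)$ to precisely the $\O(n)$-action on $\C^n$ used by $\gamma_n$. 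Since restriction along $\iota$ is simply precomposition of a group homomorphism with $\iota$, it commutes with the evaluation of $\Theta$ at $\C^n \otimes_\C V$, yielding a natural isomorphism $(\gamma_n)^* \iota^* \cong \iota^* (\alpha_n)^*$.

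Once the commutation of the right adjoints is in hand, the commutation of the left adjoints follows formally from the uniqueness of left adjoints up to natural isomorphism: the functors $(\U(n)_+ \wedge_{\O(n)} -) \circ (\gamma_n)_!$ and $(\alpha_n)_! \circ (\U(n)_+ \wedge_{\O(n)} -)$ are both left adjoints to the two naturally isomorphic right composites and are therefore themselves naturally isomorphic, establishing the stated commutation of the diagram.

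The main obstacle is not technical but conceptual: one must keep clear which $\U(n)$-action is in play at each stage, since $\U(n)$ appears both as the ambient group acting on the spectrum $\Theta$ and as the group acting on the defining vector space $\C^n$ through which $\alpha_n$ and $\gamma_n$ are built. Once one recognises that restriction along $\iota$ applies uniformly to both roles, and that the only enrichment subtlety lies in identifying how $\O(n) \hookrightarrow \U(n)$ interacts with the Thom space structure on $\J_n^\mathbf{U}(U,V)$, the commutation becomes essentially tautological.
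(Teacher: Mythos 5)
Your proposal is correct and follows essentially the same route as the paper's proof: both verify that the two composites of right adjoints send $\Theta$ to the same object $V \mapsto \Theta(\C^n \otimes_\C V)$ with compatible restricted $\O(n)$-actions, and then deduce the commutation of the left adjoints. The paper phrases the compatibility of group actions in terms of the $\U(n)$-equivariant structure maps of $X$ and their restriction along $\iota$, while you phrase it via the observation that $\alpha_n$ and $\gamma_n$ share the same underlying formula and differ only in equivariance data restricted along $\iota$; these are the same check, and your explicit appeal to uniqueness of adjoints just spells out what the paper leaves implicit in ``the result then follows immediately.''
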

\begin{proof}
Let $X$ be a unitary spectrum with an action of $\U(n)$. Then 
\[
(\iota^*\alpha_n^*)(X)(V) = \iota^* X(nV) = \gamma_n^* (\iota^*X)(V). 
\]
The result then follows immediately. Note that the functor $\iota^*$ restricts the group actions in a compatible way. The restricted action of $\O(n)$ on $X(nV)$ is $\iota^*(X(\sigma \otimes V) \circ X_{\sigma(nV)})$ where $\sigma \in \U(n)$. This is equivalent to the action $X(\iota^*(\sigma) \otimes V) \circ X_{\iota^*(\sigma)(nV)}$ since $X(\sigma \circ V)$ and $X_{\sigma(nV)}$ commute. This action is precisely the action we get from first restricting the action and then applying $\gamma_n^*$.
\end{proof}

Similarly we obtain the following result. 

\begin{lem}
The adjoint pair
\[
\adjunction{\O(2n)_+ \wedge_{\U(n)}(-)}{\U(n)\E_{2n}^\mathbf{O}}{\O(2n)\E_{2n}^\mathbf{O}}{\kappa^*}.
\]
is a Quillen adjunction.
\end{lem}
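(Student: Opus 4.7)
The plan is to mimic the preceding lemma verbatim, with the subgroup inclusion $\iota\colon\O(n)\hookrightarrow\U(n)$ replaced throughout by $\kappa\colon\U(n)\hookrightarrow\O(2n)$. First I would spell out the adjunction: on objects the left adjoint is given by $(\O(2n)_+\wedge_{\U(n)}E)(V)=\O(2n)_+\wedge_{\U(n)}E(V)$, with structure maps obtained from those of $E$ through the natural isomorphism $S^{2nW}\wedge\O(2n)_+\wedge_{\U(n)}E(V)\cong\O(2n)_+\wedge_{\U(n)}(\kappa^*S^{2nW}\wedge E(V))$ of \cite[Proposition V.2.3]{MM02}, exactly as in the construction immediately preceding the lemma. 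The unit and counit are the standard change-of-group transformations, and the adjunction identity is verified objectwise.

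Next I would show that $\kappa^*$ is right Quillen. By Proposition \ref{prop: n-stable model structure}, fibrations in the $n$-stable model structure on $\O(2n)\E_{2n}^\mathbf{O}$ are those levelwise fibrations $f\colon X\to Y$ for which the square involving the $\Omega^{2nW}$-shifted structure maps is a homotopy pullback, and weak equivalences are the $n\pi_\ast$-isomorphisms, detected by $n\pi_k(X)=\colim_q\pi_{k+2nq}X(\R^q)$. Each of these conditions is phrased purely in terms of the underlying $\T$-enriched functor $\J_{2n}^\mathbf{O}\to\T$ and makes no reference to the $\O(2n)$-action; restricting the action along $\kappa$ therefore preserves both fibrations and acyclic fibrations, giving the Quillen adjunction.

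The main (non-)obstacle is simply the bookkeeping that the $\O(2n)\T$-enriched data on $\J_{2n}^\mathbf{O}$ restricts compatibly to $\U(n)\T$-enriched data on $\kappa^*\J_{2n}^\mathbf{O}$, i.e.\ that the evaluation maps
\[
\kappa^*X_{U,V}\colon \kappa^*\J_{2n}^\mathbf{O}(U,V)\longrightarrow \T(\kappa^*X(U),\kappa^*X(V))
\]
are $\U(n)$-equivariant. This is immediate from functoriality of $\kappa^*$, completely analogous to the discussion before Lemma \ref{square 10 commutes}. No homotopical input beyond the group-independence of the model structures is required, and in particular no change-of-model issues intervene since both categories involve orthogonal (as opposed to unitary) structure maps.
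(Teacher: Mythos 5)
Your proof is correct and takes essentially the same approach as the paper, which simply notes that this lemma follows ``similarly'' to the preceding one: $\kappa^*$ preserves fibrations and weak equivalences because the levelwise fibrations and the $n\pi_*$-isomorphisms in the $n$-stable model structure are defined without reference to the group action. The extra detail you provide on the left adjoint and the equivariance of evaluation maps is consistent with the construction given in the text immediately before the $\iota^*$ version of this lemma.
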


This results in a square of adjoint functors,
\[
\xymatrix@C+1cm{
\s^\mathbf{O}[\U(n)]  \ar[r]<1.1ex>^{\O(2n)_+ \wedge_{\U(n)} -} \ar[d]<1.1ex>^{(\delta_{2n})^*} & \s^\mathbf{O}[\O(2n)]  \ar[l]<1.1ex>^{\kappa^*} \ar[d]<1.1ex>^{(\beta_{2n})^*}\\
\U(n)\E_{2n}^\mathbf{O} \ar[r]<1.1ex>^{\O(2n)_+ \wedge_{\U(n)}-} \ar[u]<1.1ex>^{{(\delta_{2n})}_!} & \O(2n)\E_{2n}^\mathbf{O} \ar[l]<1.1ex>^{\kappa^*} \ar[u]<1.1ex>^{{(\beta_{2n})}_!}.
}
\]

\begin{lem}\label{change of group for c}
The above diagram commutes up to natural isomorphism. 
\end{lem}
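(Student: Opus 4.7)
The plan is to mirror the proof of Lemma \ref{square 10 commutes} verbatim, replacing the subgroup inclusion $\iota \colon \O(n) \hookrightarrow \U(n)$ with $\kappa \colon \U(n) \hookrightarrow \O(2n)$, and replacing the functor $\alpha_n \colon V \mapsto \C^n \otimes_\C V$ with $\beta_{2n} \colon V \mapsto \R^{2n} \otimes_\R V$ (respectively $\delta_{2n}$ on the restricted side). Since the horizontal adjunctions are change-of-group adjunctions and the vertical ones are the Kan extension adjunctions induced by $\beta_{2n}$ and $\delta_{2n}$, it suffices to exhibit a natural isomorphism between the right adjoints, i.e.\ between $\kappa^* \circ (\beta_{2n})^*$ and $(\delta_{2n})^* \circ \kappa^*$.

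First I would unravel definitions on an arbitrary orthogonal spectrum $X$ with $\O(2n)$-action. By construction, $(\beta_{2n})^*(X)(V) = X(\R^{2n} \otimes_\R V)$ as an object of $\O(2n)\E_{2n}^\mathbf{O}$, so restricting along $\kappa$ yields $\kappa^*(\beta_{2n})^*(X)(V) = X(\R^{2n} \otimes_\R V)$ equipped with the $\U(n)$-action inherited from the $\O(2n)$-action on $X$. On the other side, $(\delta_{2n})^*(\kappa^*X)(V) = (\kappa^*X)(\R^{2n} \otimes_\R V)$, which is the same underlying space with the same $\U(n)$-action, so the underlying objects agree on the nose.

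The only genuinely non-formal step, exactly as in Lemma \ref{square 10 commutes}, is verifying that the two $\U(n)$-actions on the structure maps coincide. Recall that the $\O(2n)$-action on $(\beta_{2n})^*(X)$ combines the internal $\O(2n)$-action on $X(\R^{2n} \otimes_\R V)$ with the conjugation action coming from $\O(2n)$ acting on $\R^{2n}$. For $\sigma \in \U(n)$, this action is $X(\kappa(\sigma) \otimes V) \circ X_{\kappa(\sigma)(\R^{2n} \otimes V)}$; since $X(\kappa(\sigma) \otimes V)$ and the structure isomorphism $X_{\kappa(\sigma)(\R^{2n} \otimes V)}$ commute, this equals the action obtained by first forgetting structure through $\kappa$ and then applying $(\delta_{2n})^*$. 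Hence $\kappa^*(\beta_{2n})^* \cong (\delta_{2n})^*\kappa^*$ as $\U(n)\T$-enriched functors.

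The main obstacle, such as it is, is purely bookkeeping: one must confirm that the conjugation-type action of $\O(2n)$ on $\R^{2n} \otimes_\R V$ restricts along $\kappa$ to the action of $\U(n)$ on $\R^{2n} \otimes_\R V$ used to define $(\delta_{2n})^*$. This is immediate from the definition of $\delta_{2n}$ and the fact that $\kappa$ is a subgroup inclusion, so no new input beyond \cite[Proposition V.2.3]{MM02} and the argument of Lemma \ref{square 10 commutes} is required. Passing to left adjoints then gives the stated natural isomorphism of the whole square.
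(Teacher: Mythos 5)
Your proposal is correct and follows essentially the same route as the paper's proof: both unravel the right adjoints to see that $\kappa^*(\beta_{2n})^*(X)(V) = X(\R^{2n}\otimes_\R V) = (\delta_{2n})^*(\kappa^*X)(V)$ on underlying objects, then verify the $\U(n)$-actions are compatible by the same commutation argument as in Lemma \ref{square 10 commutes}. The paper's proof is terser (it just cites the group-action compatibility from the earlier lemma), while you spell out the action comparison explicitly, but the content is identical.
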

\begin{proof}
Let $X$ be an orthogonal spectrum with an action of $\O(2n)$. Then 
\[
(\kappa^*\beta_{2n}^*)(X)(V) = \kappa^* X(nV) = \delta_{2n}^* (\kappa^*X)(V). 
\]
The result then follows immediately. The group actions restrict in a compatible way as in Lemma \ref{square 10 commutes}.
\end{proof}

\subsection{Change of model through realification} 
We define a realification functor $r\colon \J_n^\mathbf{U} \to \J_n^\mathbf{O}$. This functor induces a right Quillen functor between $\O(n)\E_n^\mathbf{U}$ and $\O(n)\E_n^\mathbf{O}$. 

On objects, let $r$ be given by forgetting the complex structure, i.e., $\C^k \longmapsto \R^{2k}$. Morphisms in $ \J_n^\mathbf{U}$ are given in terms of the Thom space of the vector bundle 
\[
\gamma_n^\mathbf{U} (V,W) = \{ (f,x) \ : \ f \in \J_0^\mathbf{U}(V,W), \ x \in \C^n \otimes_\C f(V)^\perp\}
\]
over the space of linear isometries $\J_0^\mathbf{U}(V,W)$. We then define realification on a pair $(f,x)$ by 
\[
r(f,x) = (f_\R, rx)
\]
where $f_\R \in \J_0^\mathbf{O}(V_\R, W_\R)$ and $rx$ is the image of $x$ under the  $\R$--linear isomorphism
\[
\C^n \otimes_\C f(V)^\perp \to \R^n \otimes_\R (f_\R)(V_\R)^\perp.
\]

Restricting the $\U(n)$ action on $\J_n^\mathbf{U}$ to an action of $\O(n)$ through the subgroup inclusion $\iota\colon \O(n) \hookrightarrow \U(n)$, induces a functor 
\[
r\colon \iota^* \J_n^\mathbf{U} \to \J_n^\mathbf{O}
\]
and precomposition defines a functor 
\[
r^* \colon \O(n)\E_n^\mathbf{O} \to \O(n)\E_n^\mathbf{U}. 
\]

To see that $r^*$ is well defined, we check that the map 
\[
(r^*F) \colon \iota^*\J_n^\mathbf{U}(V,W) \to \T((r^*F)(V), (r^*F)(V)) = \T(F(V_\R), F(W_\R))
\]
is $\O(n)$-equivariant where $F \in \O(n)\E_n^\mathbf{O}$. Indeed, let $(f,x) \in \iota^*\J_n^\mathbf{U}(V,W)$ and $\sigma \in \O(n)$, 
\[
(r^*F)(\sigma(f,x)) = (r^*F)(f, \iota(\sigma)(x)) = F(f_\R, r(\iota(\sigma)(x))).
\]
For $W$ a complex vector space, the restricted action of $\U(n)$ to $\O(n)$ on $\C^n \otimes_\C W$ is compatible with the $\O(n)$-action on $\R^n \otimes_\R rW$, hence $r(\iota^*(\sigma)(x)) = \sigma (rx)$, and the above becomes 
\[
F(f_\R, r(\iota^*(\sigma)(x))) = F(f_\R, \sigma (rx)) = \sigma (F(f_\R, rx)) = \sigma ( (r^*F)(f,x)). 
\]
It follows that the required map is $\O(n)$-equivariant and hence $r^*F$ is a well defined object of $\O(n)\E_n^\mathbf{U}$. 

The structure maps of $r^*F$ are given by iterating the structure maps of $F$;
\[
S^{2n} \wedge (r^*F)(\C^k) \xrightarrow{=} S^{2n} \wedge F(\R^{2k}) \xrightarrow{\sigma^2} F(\R^{2k+2}) \xrightarrow{=} (r^*F)(\C^{k+1}),
\]
where $\sigma \colon S^n \wedge F(\R^k) \to F(\R^{k+1})$ is the structure map of $F$. As $r^*$ is defined by precomposition it has a natural left adjoint, $r_!$ given by the left Kan extension along $r$.

\begin{lem}\label{lem: realification as QF}
The functor $r^*\colon \O(n)\E_n^\mathbf{O} \to \O(n)\E_n^\mathbf{U}$ is a right Quillen functor. 
\end{lem}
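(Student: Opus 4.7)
The plan is to verify directly the two conditions for $r^*$ to be right Quillen, namely preservation of fibrations and of acyclic fibrations in the $n$-stable model structure of Proposition \ref{prop: n-stable model structure}. The essential observation is that by construction $(r^*F)(V) = F(V_\R)$, so $r^*$ tautologically preserves level-wise weak equivalences and level-wise fibrations; everything then comes down to unpacking the characterisations of the model structure against this observation.

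For acyclic fibrations I would argue as follows. The cofibrations of the $n$-stable model structure are characterised by the left lifting property against level-wise acyclic fibrations, so the acyclic fibrations of the $n$-stable model structure coincide with the level-wise acyclic fibrations, which are preserved by $r^*$. For fibrations, let $f\colon X \to Y$ be a fibration in $\O(n)\E_n^\mathbf{O}$. Then $r^*f$ is level-wise a fibration, so it remains to verify the homotopy pullback condition. For $V, W \in \J_n^\mathbf{U}$, I would unwind the defining square for $r^*f$ at $(V, W)$ using $(r^*X)(V) = X(V_\R)$, $(r^*X)(V \oplus W) = X(V_\R \oplus W_\R)$, and the identification of the unitary $n$-sphere $S^{\C^n \otimes_\C W}$ with the orthogonal $n$-sphere $S^{\R^n \otimes_\R W_\R}$. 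The resulting square is precisely the orthogonal homotopy pullback square for $f$ evaluated at $(V_\R, W_\R) \in \J_n^\mathbf{O}$, which is a homotopy pullback square by the assumption that $f$ is a fibration.

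The step demanding a little care, rather than an outright obstacle, is checking that the identification $S^{\C^n \otimes_\C W} \cong S^{\R^n \otimes_\R W_\R}$ is compatible with the relevant $\O(n)$-actions, where the action on the left-hand side is obtained by restricting the $\U(n)$-action through the subgroup inclusion $\iota \colon \O(n) \hookrightarrow \U(n)$. This is precisely the same compatibility already invoked in the proof of Lemma \ref{homog r}, so I would cite that observation and proceed. Everything else reduces to formal bookkeeping with level-wise constructions, and no localisation arguments or fibrant replacement constructions are needed.
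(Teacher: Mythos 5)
Your argument is correct and slightly more explicit than the paper's own. Both proofs unpack the characterisation of the $n$-stable model structure from Proposition \ref{prop: n-stable model structure} and hinge on the compatibility of $r^*$ with the structure maps via the $\O(n)$-equivariant identification $S^{\C^n \otimes_\C W} \cong S^{\R^n \otimes_\R W_\R}$. The difference is that the paper only records that $r^*$ preserves levelwise weak equivalences, levelwise fibrations, and fibrant objects (the $n\Omega$-spectra), and then tacitly relies on the standard criterion for descending a Quillen adjunction through a left Bousfield localisation --- essentially the combination of \cite[Corollary A.2]{Du01} and \cite[Proposition 3.3.16]{Hi03} invoked explicitly elsewhere in the paper. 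You instead verify the fibration condition directly, identifying the unitary homotopy pullback square for $r^*f$ at $(V,W)$ with the orthogonal one for $f$ at $(V_\R, W_\R)$; this is more self-contained and makes the argument independent of the general localisation machinery, at the cost of a little extra bookkeeping. Your treatment of the acyclic fibrations --- that they coincide with the levelwise acyclic fibrations because the cofibrations agree with those of the projective model structure --- is exactly right, and your cross-reference to Lemma \ref{homog r} for the $\O(n)$-equivariance of the sphere identification is the correct thing to cite.
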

\begin{proof}
By definition on objects, $r^*$ preserves all levelwise weak equivalences and all levelwise fibrations. The compatibility with $r^*$ and the structure maps shows that $r^*$ preserves fibrant objects. 
\end{proof}

This comparison produces a diagram of adjoint functors 
\[
\xymatrix@C+1cm{
\s^\mathbf{O}[\O(n)]  \ar[r]<-1.1ex>_{r^*} \ar[d]<1.1ex>^{(\beta_n)^*} & \s^\mathbf{U}[\O(n)]  \ar[l]<-1.1ex>_{r_!} \ar[d]<1.1ex>^{(\gamma_n)^*}\\
\O(n)\E_n^\mathbf{O} \ar[r]<-1.1ex>_{r^*} \ar[u]<1.1ex>^{{(\beta_n)}_!} & \O(n)\E_n^\mathbf{U} \ar[l]<-1.1ex>_{r_!} \ar[u]<1.1ex>^{{(\gamma_n)}_!}.
}
\]

\begin{lem}\label{square 9 commutes}
The above diagram commutes up to natural isomorphism. 
\end{lem}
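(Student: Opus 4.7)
Following the template of Lemma \ref{square 10 commutes}, my plan is to verify commutation of the right adjoints; commutation of the left adjoints will then follow from uniqueness of adjoints. The first step is to fix $X \in \s^\mathbf{O}[\O(n)]$ and $V \in \iota^*\J_n^\mathbf{U}$ and unpack the two composites:
\[
((\gamma_n)^*\, r^*X)(V) = (r^*X)(\C^n \otimes_\C V) = X\bigl((\C^n \otimes_\C V)_\R\bigr),
\]
\[
(r^*\, (\beta_n)^*X)(V) = ((\beta_n)^*X)(V_\R) = X\bigl(\R^n \otimes_\R V_\R\bigr).
\]

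The crux of the argument is then to produce a natural $\O(n)$-equivariant isometry of real inner product spaces
\[
\phi_V \colon (\C^n \otimes_\C V)_\R \xrightarrow{\ \cong\ } \R^n \otimes_\R V_\R,
\]
whereupon applying $X$ to $\phi_V$ yields the desired natural isomorphism of objects of $\O(n)\E_n^\mathbf{U}$. The map $\phi_V$ is built from the canonical chain of identifications $(\C^n \otimes_\C V)_\R \cong (V^n)_\R \cong (V_\R)^n \cong \R^n \otimes_\R V_\R$, choosing a $\C$-basis $\{v_j\}$ of $V$ to read off a common $\R$-basis $\{e_i \otimes v_j, e_i \otimes iv_j\}$ on both sides.

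The main obstacle, though essentially routine, is to verify three compatibilities for $\phi_V$: that it respects the $\O(n)$-actions, that it is an isometry, and that it assembles into a natural transformation of $\iota^*\J_n^\mathbf{U}$-enriched functors. Equivariance is immediate, because elements of $\O(n) \subseteq \U(n)$ act on $\C^n$ by real matrices and therefore commute with the real/imaginary decomposition used to build $\phi_V$. The isometry property follows from the identification of the real inner product on a realification with the real part of the Hermitian form, carried out explicitly in Section \ref{section: input functors}. Finally, naturality in morphisms of $\iota^*\J_n^\mathbf{U}$ reduces to naturality of $\phi$ in $V$ for $\C$-linear isometries, together with the fact that realification commutes with orthogonal complements (again from Section \ref{section: input functors}); this last point ensures that the bundle coordinate $x \in \C^n \otimes_\C f(V)^\perp$ appearing in a morphism $(f,x)$ of $\J_n^\mathbf{U}$ is transported correctly under $\phi$, so that $\beta_n \circ r$ and $r \circ \gamma_n$ agree as functors $\iota^*\J_n^\mathbf{U} \to \J_1^\mathbf{O}$ up to the natural isomorphism $\phi$.
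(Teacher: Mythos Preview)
Your proposal is correct and follows essentially the same route as the paper. The paper phrases the argument in terms of the underlying square of indexing categories
\[
\xymatrix@C+1cm{
\J_1^\mathbf{O}   & \J_1^\mathbf{U} \ar[l]_r \\
\J_n^\mathbf{O} \ar[u]^{\beta_n} & \iota^*\J_n^\mathbf{U} \ar[l]_r \ar[u]_{\gamma_n}
}
\]
and checks $r\gamma_n \cong \beta_n r$ directly on objects and on morphisms $(f,x)$; since all four right adjoints are precomposition functors, commutation of this square yields commutation of the right adjoints, exactly as you argue. Your isomorphism $\phi_V\colon (\C^n\otimes_\C V)_\R \cong \R^n\otimes_\R V_\R$ is precisely the natural isomorphism witnessing the object-level commutation of that square, and your discussion of the bundle coordinate $x$ and $\O(n)$-equivariance spells out what the paper compresses into one line. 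The only stylistic difference is that the paper works with the indexing categories first and then invokes precomposition, whereas you unpack the composites on a test object $X$ from the start; the content is the same.
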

\begin{proof}
Consider the diagram of enriched categories, 
\[
\xymatrix@C+1cm{
\J_1^\mathbf{O}   & \J_1^\mathbf{U} \ar[l]_r \\
\J_n^\mathbf{O} \ar[u]^{\beta_n} & \J_n^\mathbf{U} \ar[l]_r \ar[u]_{\gamma_n}
}
\]
It is clear from the definition of these functors that the diagram commutes on objects up to natural isomorphism.  Now on morphisms, take $(f,x) \in \J_n^\mathbf{U}$. Then 
\[
r(\gamma_n(f,x)) = r((\C^n \otimes f, x) = (\R^n \otimes f_\R, rx) = \beta_n((f_\R, rx)) = \beta_n(r(f,x)). 
\]
It follows that $r \gamma_n = \beta_n r$. Since the right adjoints in the required diagram are defined in terms of precomposition the result follows.  Note that the group actions are also compatible since the unitary $\gamma_n$ has been restricted to $\O(n)$ actions. 
\end{proof}

\subsection{Change of model through complexification}
Define a complexification functor $c\colon \J_{2n}^\mathbf{O} \to \J_{n}^\mathbf{U}$, given on objects by $cV = \C \otimes V$, and on morphisms by sending $(f,x) \in \J_{2n}^\mathbf{O}(V,W)$ to $(\C \otimes f, cx) \in \J_n^\mathbf{U}(cV, cW)$, where $cx$ is the image of $x$ under the composition of isomorphisms,
\[
\R^{2n} \otimes_\R \coker(f) \xrightarrow[\cong]{\varphi_1} \C^n \otimes_\R \coker(f) \xrightarrow[\cong]{\varphi_2} \C^n \otimes_\C \C \otimes_\R \coker(f) \xrightarrow[\cong]{\varphi_3} \C^n \otimes_\C \coker(\C \otimes f). 
\]
where 
\[
\begin{split}
\varphi_1 (r_1, \cdots, r_{2n}, f(v)) &= (r_1 +ir_2, \cdots r_{2n-1} +ir_{2n}, f(v)); \\
\varphi_2(c_1, \cdots, c_n, f(v)) &= (c_1, \cdots, c_n, 1, f(v)); \ \  \text{and}  \\
\varphi_3(c_1, \cdots, c_n, c, f(v)) &= (c_1, \cdots, c_n, (\C \otimes f)(c \otimes v)). \\
\end{split}
\]

Restricting from $\O(2n)$ to $\U(n)$ through the subgroup inclusion $\kappa \colon \U(n) \hookrightarrow \O(2n)$ gives a functor 
\[
c \colon \kappa^*\J_{2n}^\mathbf{O} \to \J_n^\mathbf{U},
\]
and precomposition defines a functor 
\[
c^* \colon \U(n)\E_n^\mathbf{U} \to \U(n)\E_{2n}^\mathbf{O}. 
\]

This functor is well defined as for $X \in \U(n)\E_n^\mathbf{U}$ the map 
\[
c^*X \colon\kappa^* \J_{2n}^\mathbf{O}(V,W) \to \T((c^*X)(V), (c^*X)(W))= \T(X (\C \otimes V), X(\C \otimes W))
\]
is $\U(n)$-equivariant. Indeed, for $\sigma \in \U(n)$, 
\[
\begin{split}
(c^*X)(\sigma(f,x)) = (c^*X)(f, \sigma x) &= X(\C \otimes f, c(\kappa^*(\sigma) x)) = X(\C \otimes f, \sigma(cx)) \\ &= X(\C \otimes f, \sigma x) = \sigma ((c^*X)(f,x)).
\end{split}
\]

The structure maps of $c^*X$ are induced by those of $X$, i.e., 
\[
S^{2n} \wedge (c^*X)(V) \xrightarrow{=} S^{2n} \wedge X(\C \otimes V) \xrightarrow{\sigma} X((\C \otimes V) \oplus \C) \xrightarrow{=} (c^*X)(V \oplus \R)
\]
where $\sigma\colon S^{2n} \wedge X(W) \to (W \oplus \C)$ is the structure map of $X \in \U(n)\E_{2n}^\mathbf{O}$. 

The complexification functor $c^*$ has a left adjoint, $c_!$ given by the left Kan extension along $c$. We obtain a similar result to the case of realification, Lemma \ref{lem: realification as QF}.

\begin{lem}
The functor $c^* \colon  \U(n)\E_n^\mathbf{U} \to \U(n)\E_{2n}^\mathbf{O}$ is a right Quillen functor.
\end{lem}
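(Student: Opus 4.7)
The plan is to mirror the proof of Lemma \ref{lem: realification as QF} nearly verbatim. I would verify that $c^*$ preserves levelwise weak equivalences, levelwise fibrations, and the homotopy pullback squares characterising fibrations in the $n$-stable model structure from Proposition \ref{prop: n-stable model structure}; this will deliver both halves of the right Quillen functor condition.

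First I would observe that the acyclic fibrations of the $n$-stable model structure are precisely the levelwise acyclic fibrations, since cofibrations are defined via the left lifting property against these. Because $c^*$ is precomposition with $c\colon \kappa^*\J_{2n}^\mathbf{O} \to \J_n^\mathbf{U}$, we have $(c^*f)_V = f_{\C \otimes_\R V}$, so $c^*$ sends levelwise (acyclic) fibrations to levelwise (acyclic) fibrations. This handles preservation of acyclic fibrations immediately.

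The main step, and the only place real content is required, is preservation of fibrations. For a fibration $f\colon X \to Y$ in $\U(n)\E_n^\mathbf{U}$ and $V, W \in \J_{2n}^\mathbf{O}$, the square of Proposition \ref{prop: n-stable model structure} applied to $c^*f$ takes the form
\[
\xymatrix{
X(\C \otimes V) \ar[r] \ar[d] & \Omega^{2nW} X(\C \otimes V \oplus \C \otimes W) \ar[d] \\
Y(\C \otimes V) \ar[r] & \Omega^{2nW} Y(\C \otimes V \oplus \C \otimes W).
}
\]
The key input, and the only potential obstacle, is a $\U(n)$-equivariant isometric $\R$-linear isomorphism $\R^{2n} \otimes_\R W \cong \C^n \otimes_\C (\C \otimes_\R W)$, natural in $W$, where on the left $\U(n)$ acts via the subgroup inclusion $\kappa \colon \U(n) \hookrightarrow \O(2n)$. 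This is precisely the compatibility already packaged in the definition of $c$ on morphisms via the isomorphisms $\varphi_1, \varphi_2, \varphi_3$ above. Passing to one-point compactifications yields a $\U(n)$-homeomorphism $S^{2nW} \cong S^{n \cdot cW}$, which identifies the square above with the homotopy pullback square for $f$ at $(cV, cW) \in \J_n^\mathbf{U}$. Hence $c^*f$ is a fibration in $\U(n)\E_{2n}^\mathbf{O}$, completing the argument.
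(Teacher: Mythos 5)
Your proof is correct, and it is somewhat more explicit than what the paper supplies. The paper omits a proof of this lemma entirely, pointing to the analogous realification statement (Lemma~\ref{lem: realification as QF}), whose proof is terse: it notes that $r^*$ preserves levelwise weak equivalences and levelwise fibrations, and that compatibility with the structure maps shows $r^*$ preserves \emph{fibrant objects}; the conclusion then rests implicitly on a criterion of the Dugger type (preserve acyclic fibrations plus fibrations between fibrant objects, using that between fibrant objects the stable fibrations are just the levelwise ones). You instead verify the full fibration condition of Proposition~\ref{prop: n-stable model structure} directly for an arbitrary stable fibration, which avoids invoking Dugger's criterion and is a cleaner, self-contained argument. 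Both routes hinge on the same key ingredient you isolate: the natural $\U(n)$-equivariant identification $\R^{2n}\otimes_\R W \cong \C^n\otimes_\C(\C\otimes_\R W)$ (equivalently $S^{2nW}\cong S^{n\cdot cW}$), which matches the looping in the orthogonal and unitary homotopy pullback squares. Your preliminary step identifying the stable acyclic fibrations with the levelwise ones is also the right observation and is exactly what makes the first half of the Quillen condition immediate.
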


This produced a diagram of adjoint functors
\[
\xymatrix@C+1cm{
\s^\mathbf{U}[\U(n)]  \ar[r]<1.1ex>^{r_!} \ar[d]<1.1ex>^{(\alpha_n)^*} & \s^\mathbf{O}[\U(n)]  \ar[l]<1.1ex>^{r^*} \ar[d]<1.1ex>^{(\delta_{2n})^*}\\
\U(n)\E_n^\mathbf{U} \ar[r]<-1.1ex>_{c^*} \ar[u]<1.1ex>^{{(\alpha_n)}_!} & \U(n)\E_{2n}^\mathbf{O} \ar[l]<-1.1ex>_{c_!} \ar[u]<1.1ex>^{{(\delta_{2n})}_!}.
}
\]

\begin{lem}\label{intermediate categories and spectra for c}
The above diagram commutes up to natural isomorphism.
\end{lem}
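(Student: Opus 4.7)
The plan is to mirror the strategy of Lemma \ref{square 9 commutes}: it suffices to check commutativity at the level of enriched indexing categories, since each right adjoint in the square is given by precomposition along such a functor. Commutativity of the left-adjoint portion of the diagram will then follow from uniqueness of adjoints up to isomorphism.

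Concretely, I would establish a natural isomorphism of $\U(n)\T$-enriched functors
\[
\delta_{2n} \cong r \circ \alpha_n \circ c \colon \kappa^*\J_{2n}^\mathbf{O} \longrightarrow \J_1^\mathbf{O},
\]
so that passing to precompositions yields $(\delta_{2n})^* \cong c^* \circ (\alpha_n)^* \circ r^*$, which is exactly the commutativity of the right-adjoint square. On objects this uses the chain of natural isomorphisms
\[
(r \circ \alpha_n \circ c)(V) = (\C^n \otimes_\C \C \otimes_\R V)_\R \cong (\C^n)_\R \otimes_\R V \cong \R^{2n} \otimes_\R V = \delta_{2n}(V),
\]
via $\C^n \otimes_\C \C \cong \C^n$ and the identification $(W \otimes_\R V)_\R \cong W_\R \otimes_\R V$ for $W$ complex and $V$ real.

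On morphism spaces, both functors send a pair $(f, x) \in \kappa^*\J_{2n}^\mathbf{O}(V, W)$ to a point of the Thom space of the $n$-th complement bundle over $\J_0^\mathbf{O}(\R^{2n} \otimes_\R V, \R^{2n} \otimes_\R W)$. The isomorphisms $\varphi_1, \varphi_2, \varphi_3$ used to define $c$ in the preceding subsection were constructed precisely so that the composition $r \circ \alpha_n \circ c$ on the Thom space data agrees with the direct action of $\delta_{2n}$; tracing through these identifications yields the required naturality on morphisms.

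The main obstacle is compatibility of group actions: one must verify that the $\U(n)$-action carried by $\J_n^\mathbf{U}(cV, cW)$ and then transported through $\alpha_n$ and $r$ agrees with the $\kappa^*$-restricted $\U(n)$-action on $\J_{2n}^\mathbf{O}$ passed through $\delta_{2n}$. This proceeds exactly as in Lemmas \ref{square 10 commutes} and \ref{change of group for c}, using that an element $\sigma \in \U(n)$ acts compatibly on the successive complex and real tensor products under the inclusion $\kappa \colon \U(n) \hookrightarrow \O(2n)$; it is routine but notationally involved bookkeeping rather than a substantial difficulty.
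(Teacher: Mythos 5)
Your proposal is correct and follows essentially the same route as the paper: establish the natural isomorphism $r \circ \alpha_n \circ c \cong \delta_{2n}$ at the level of the enriched indexing categories (the paper checks this on objects and morphisms, with the key observation that $rcx = x$ on Thom space data), then deduce commutativity of the right-adjoint square by precomposition and of the left-adjoint square by uniqueness of adjoints, with group-action compatibility handled as in the earlier commutation lemmas.
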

\begin{proof}
Consider the diagram of enriched categories
\[
\xymatrix@C+1cm{
\J_{1}^\mathbf{U} \ar[r]^{r}   & \J_1^\mathbf{O} \\
\J_{n}^\mathbf{U}   \ar[u]^{\alpha_n} & \J_{2n}^\mathbf{O} \ar[u]_{\delta_{2n}} \ar[l]^{c}.
}
\]
It is clear that this diagram commutes up to natural isomorphism on objects. On morphisms, let $(f,x) \in \J_{2n}^\mathbf{O}(V,W)$. Then 
\[
\begin{split}
r(\alpha_n(c(f,x))) &\cong r(\alpha_n(\C \otimes f, cx))=r (\C^n \otimes_\C \C \otimes_\R f, cx) \\ &\cong r(\C^{n}\otimes f, cx) = (\R^{2n} \otimes f_\R, rcx) \\ &\cong \delta_{2n}(f,x),
\end{split}
\]
where $rcx = x$.
It follows that $r\alpha_nc \cong \delta_{2n}$. As the right adjoints of the required diagram are defined in terms of precomposition, the result follows. The group actions are compatible by a similar argument to Lemma \ref{square 9 commutes}.
\end{proof}

\section{A homotopy category level comparison}\label{section: homotopy categories}

We have shown previously that all but the bottom pentagons of Figure \ref{fig 1} commute. Moreover, since all of the commutation results for the sub-diagrams - excluding the lower pentagons - involve composing left (resp. right) Quillen functors with left (resp. right) Quillen functors those sub-diagrams commute on the homotopy category level. Hence, the only sections of Figure \ref{fig 1} left to consider are the lower pentagons. The commuting of diagrams of adjoint functors (up to natural isomorphism) means that the respective diagrams of left and right adjoints commute. These pentagons are built from a mixture of left and right adjoints, so we must address how it commutes in a different manner.

\begin{lem}
The diagram 
\[
\xymatrix@C+1cm{
 \O(n)\E_n^\mathbf{O}  \ar[d]_{\res_0^n/\O(n)}  \ar[r]^{r^*} &  \O(n)\E_n^\mathbf{U} \ar[r]^{\U(n)_+\wedge_{\O(n)}  (-)}  &  \U(n)\E_n^\mathbf{U} \ar[d]^{\res_0^n/\U(n)} \\
n\homog\E_0^\mathbf{O}  \ar[rr]_{r^*} & & n\homog\E_0^\mathbf{U} 
}
\]
commutes up to natural isomorphism. 
\end{lem}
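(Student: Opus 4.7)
The plan is to verify commutativity by direct pointwise computation: we evaluate both composite functors on a generic $F \in \O(n)\E_n^\mathbf{O}$ and a test object $V \in \J_0^\mathbf{U}$, then exhibit a natural isomorphism between the two resulting spaces.

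Unpacking the top path: by construction $(r^*F)(W) = F(W_\R)$ for $W \in \J_n^\mathbf{U}$, viewed with its residual $\O(n)$-action inherited from $F$. Applying the change-of-group functor produces $\U(n)_+ \wedge_{\O(n)} F(W_\R)$ as a $\U(n)$-space. Finally, $\res_0^n/\U(n)$ restricts to $V \in \J_0^\mathbf{U}$ and takes strict $\U(n)$-orbits, producing $\bigl(\U(n)_+ \wedge_{\O(n)} F(V_\R)\bigr)/\U(n)$. Unpacking the bottom path: $(\res_0^n/\O(n))F$ is the functor $W \mapsto F(W)/\O(n)$ on $\J_0^\mathbf{O}$, and precomposing with realification yields $F(V_\R)/\O(n)$ at $V \in \J_0^\mathbf{U}$.

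The bridge between the two expressions is the standard identification $(G_+\wedge_H X)/G \cong X/H$ for any $H$-space $X$ with $H \leq G$, applied to the pair $\O(n) \leq \U(n)$ and $X = F(V_\R)$. This isomorphism is natural in $X$, hence in $F$, so it assembles into a natural transformation between the two composite functors. It remains to check compatibility with the $\T$-enriched structure maps induced by morphisms $f \in \J_0^\mathbf{U}(V,V')$; this follows immediately from the fact that $r \colon \J_0^\mathbf{U} \to \J_0^\mathbf{O}$ is an honest functor and that strict orbits commute with the continuous maps $F(f_\R) \colon F(V_\R) \to F(V'_\R)$, while the canonical isomorphism $(\U(n)_+\wedge_{\O(n)}-)/\U(n) \cong (-)/\O(n)$ is a natural isomorphism of endofunctors of $\O(n)\T$.

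There is no serious obstacle here: the argument is entirely formal once one identifies the two expressions. The only potentially delicate point is bookkeeping of the group actions, namely checking that the $\O(n)$-action on $F(V_\R)$ obtained via the top route (first restrict $\U(n)$ down to $\O(n)$ through $\iota$, then extend back up, then quotient) agrees with the original $\O(n)$-action used on the bottom route. This compatibility is built into the very definition of $r^*$ on the intermediate categories, where the $\O(n)$-action on $(r^*F)(W)$ is defined to be the restriction through $\iota$ of the ambient $\U(n)$-action, matching the original action on $F(W_\R)$.
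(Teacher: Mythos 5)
Your argument is correct and matches the paper's proof in substance: both reduce to the orbit identity $(\U(n)_+\wedge_{\O(n)} Y)/\U(n)\cong Y/\O(n)$ together with the compatibility of realification with the inclusions $i_0^n\colon\J_0\to\J_n$. The only thing the paper does more explicitly is record this compatibility as a commuting square of enriched indexing categories (with $r$ at the $\J_n$-level and $i_0^n$), which is the cleanest way to see your ``compatibility with structure maps'' step; invoking functoriality of $r\colon\J_0^\mathbf{U}\to\J_0^\mathbf{O}$ alone is slightly beside the point since what is really needed is that $r\circ i_0^n = i_0^n\circ r$.
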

\begin{proof}
Consider the diagram of enriched categories 
\[
\xymatrix@C+1cm{
\J_n^\mathbf{O} & \ar[l]_{r}   \J_n^\mathbf{U} \\
\J_0^\mathbf{O}  \ar[u]^{i_0^n} & \ar[l]^{r}  \J_0^\mathbf{U} \ar[u]_{i_0^n} 
}
\]
where $i_0^n$ is the identity on objects and $f \longmapsto (f,0)$ on morphisms. This diagram clearly commutes on objects and morphisms. 
Let $X \in \O(n)\E_n^\mathbf{O}$. Then 
\[
\begin{split}
\res_0^n (\U(n)_+ \wedge_{\O(n)} (r^*X))/\U(n) &= (\U(n)_+ \wedge_{\O(n)} (X \circ r \circ i_0^n))/\U(n)  \\
								&\cong (X \circ r \circ i_0^n)/\O(n) \cong (X \circ i_0^n \circ r)/\O(n) \\
								&=r^*((\res_0^n X)/\O(n))
\end{split}
\]
where the first isomorphism comes from the fact that for any $\O(n)$-space $Y$, $(\U(n)_+ \wedge_{\O(n)} Y)/\U(n) \cong Y/\O(n)$, and the second isomorphism follows from the commutation of the above diagram of enriched categories. 
\end{proof}

\begin{lem}
The diagram
\[
\xymatrix@C+1cm{
\U(n)\E_n^\mathbf{U} \ar[r]^{c^*}  \ar[d]_{\res_0^n/\U(n)} &   \U(n)\E_{2n}^\mathbf{O} \ar[r]^{\O(2n)_+\wedge_{\U(n)}  (-)} & \O(2n)\E_{2n}^\mathbf{O} \ar[d]^{\res_0^{2n}/\O(2n)} \\
 n\homog\E_0^\mathbf{U}  \ar[rr]_{c^*}& & (2n)\homog\E_0^\mathbf{O} \\
 }
\]
commutes up to natural isomorphism.
\end{lem}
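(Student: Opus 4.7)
My plan is to mimic the strategy of the preceding lemma, replacing the realification functor with the complexification functor and adjusting the arities accordingly. The argument has two ingredients: a commutative square of enriched indexing categories, and the standard fact that for an $H$-space $Y$ one has $(G_+ \wedge_H Y)/G \cong Y/H$.

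First I would set up the diagram of enriched categories
\[
\xymatrix@C+1cm{
\J_n^\mathbf{U} & \ar[l]_{c} \J_{2n}^\mathbf{O} \\
\J_0^\mathbf{U} \ar[u]^{i_0^n} & \ar[l]^{c} \J_0^\mathbf{O} \ar[u]_{i_0^{2n}}
}
\]
where the vertical functors are identity on objects and send $f$ to $(f,0)$ on morphism spaces, and the horizontal functors are the complexification functors defined in Section \ref{section: intermediate categories}. On objects, both composites send a real inner product space $V$ to $\C \otimes_\R V$, viewed as an object of $\J_n^\mathbf{U}$. On morphisms this reduces to checking that the zero vector in the Thom space is preserved under complexification, which is immediate from the formula $c(f,0) = (\C \otimes f, c(0)) = (\C \otimes f, 0)$. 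Hence the square commutes (strictly on objects and up to natural isomorphism on morphisms, using the identifications $\varphi_1, \varphi_2, \varphi_3$ already introduced).

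Next, for $X \in \U(n)\E_n^\mathbf{U}$, I would chase through the diagram:
\[
\res_0^{2n}\bigl(\O(2n)_+ \wedge_{\U(n)} (c^*X)\bigr)\big/\O(2n) = \bigl(\O(2n)_+ \wedge_{\U(n)} (X \circ c \circ i_0^{2n})\bigr)\big/\O(2n).
\]
Applying the isomorphism $(\O(2n)_+ \wedge_{\U(n)} Y)/\O(2n) \cong Y/\U(n)$ for any $\U(n)$-space $Y$, this becomes $(X \circ c \circ i_0^{2n})/\U(n)$. Using the commutation of the enriched category diagram above, $c \circ i_0^{2n} \cong i_0^n \circ c$, and this is therefore naturally isomorphic to $(X \circ i_0^n \circ c)/\U(n) = c^*\bigl((\res_0^n X)/\U(n)\bigr)$, which is the composite around the other side of the pentagon.

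The only slightly subtle point is to check that the isomorphism $(\O(2n)_+ \wedge_{\U(n)} Y)/\O(2n) \cong Y/\U(n)$ is compatible with the enriched functorial structure so that the resulting natural transformation is actually natural in $X$; this is a routine check that reduces to the naturality of the same isomorphism in $Y$, and to the fact that the action of $\U(n)$ on $c^*X$ is obtained by restriction of the $\U(n)$-action on $X$ along the identity (so no further compatibility is needed). With that in hand, the diagram commutes up to natural isomorphism, as required.
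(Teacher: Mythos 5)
Your proof is correct and follows exactly the route the paper intends: the paper's own proof simply says ``This proof follows similarly to the above, starting with the diagram of enriched categories [...] which commutes,'' and your write-up is precisely that analogous argument with $r$ replaced by $c$, the arity $n$ replaced by $2n$ on the orthogonal side, and the change-of-group isomorphism $(\O(2n)_+ \wedge_{\U(n)} Y)/\O(2n) \cong Y/\U(n)$ in place of its $\O(n)\leq\U(n)$ counterpart. No gap; the naturality point you flag is indeed the same routine check as in the preceding lemma.
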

\begin{proof}
This proof follows similarly to the above, starting with the diagram of enriched categories
\[
\xymatrix@C+1cm{
\J_{n}^\mathbf{U} & \ar[l]_{c}  \J_{2n}^\mathbf{O}  \\
\J_0^\mathbf{U} \ar[u]^{i_0^{n}}&  \ar[l]^{c}  \J_0^\mathbf{O} \ar[u]_{i_0^{2n}}
}
\]
which commutes. 
\end{proof}

These squares are built using alternating left and right adjoints, hence no clean model category commutation is possible. We start with a larger diagram of homotopy categories and then restrict to our required diagram. On the homotopy category level we obtain the following result. 

\begin{lem}\label{big dia for r}
The following diagram of homotopy categories
\[
\xymatrix@C+1cm{
\Ho(\s^\mathbf{O}[\O(n)]) \ar[r]^{\U(n)_+ \wedge_{\O(n)}^\mathds{L} (-)} \ar[d]_{\mathds{R}(\beta_n)^*} & \Ho(\s^\mathbf{O}[\U(n)]) \ar[r]^{\mathds{R}r^*} & \Ho(\s^\mathbf{U}[\U(n)]) \ar[d]^{\mathds{R}(\alpha_n)^*} \\
\Ho(\O(n)\E_n^\mathbf{O}) \ar[d]_{\mathds{L}\res_0^n/\O(n)} & & \Ho(\U(n)\E_n^\mathbf{U}) \ar[d]^{\mathds{L}\res_0^n/\U(n)} \\
\Ho(n\homog\E_0^\mathbf{O}) \ar[rr]_{\mathds{R}r^*\mathrm{red}(-)} & & \Ho(n\homog\E_0^\mathbf{U}) 
}
\]
commutes up to natural isomorphism. 
\end{lem}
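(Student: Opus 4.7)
The plan is to decompose the diagram into smaller sub-diagrams whose commutations are already established, and then to carefully manage the mixing of left- and right-derived functors in the final step. Starting from the composite along the top and right, namely $\mathds{L}\res_0^n/\U(n) \circ \mathds{R}(\alpha_n)^* \circ \mathds{R}r^* \circ (\U(n)_+ \wedge_{\O(n)}^\mathds{L} (-))$, we rewrite it iteratively. By Lemma \ref{lem: spectra and change of group}, we have $\mathds{R}r^* \circ (\U(n)_+ \wedge_{\O(n)}^\mathds{L}(-)) \simeq (\U(n)_+ \wedge_{\O(n)}^\mathds{L}(-)) \circ \mathds{R}r^*$; since $r$ underlies a Quillen equivalence, the underived commutation of adjoints lifts to the derived level via the identification $\mathds{R}r^* \simeq (\mathds{L}r_!)^{-1}$ on the homotopy category. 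Similarly, Lemma \ref{square 10 commutes} yields $\mathds{R}(\alpha_n)^* \circ (\U(n)_+ \wedge_{\O(n)}^\mathds{L}(-)) \simeq (\U(n)_+ \wedge_{\O(n)}^\mathds{L}(-)) \circ \mathds{R}(\gamma_n)^*$, and Lemma \ref{square 9 commutes} yields $\mathds{R}(\gamma_n)^* \circ \mathds{R}r^* \simeq \mathds{R}r^* \circ \mathds{R}(\beta_n)^*$.

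After these substitutions, the original composite becomes $\mathds{L}\res_0^n/\U(n) \circ (\U(n)_+ \wedge_{\O(n)}^\mathds{L}(-)) \circ \mathds{R}r^* \circ \mathds{R}(\beta_n)^*$, so the big diagram commutes if and only if the reduced square
\[
\xymatrix@C+1em{
\Ho(\O(n)\E_n^\mathbf{O}) \ar[r]^{\mathds{R}r^*} \ar[d]_{\mathds{L}\res_0^n/\O(n)} & \Ho(\O(n)\E_n^\mathbf{U}) \ar[r]^{\U(n)_+ \wedge_{\O(n)}^\mathds{L}} & \Ho(\U(n)\E_n^\mathbf{U}) \ar[d]^{\mathds{L}\res_0^n/\U(n)} \\
\Ho(n\homog\E_0^\mathbf{O}) \ar[rr]_{\mathds{R}r^*\mathrm{red}(-)} & & \Ho(n\homog\E_0^\mathbf{U})
}
\]
commutes. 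The underived version of this square commutes on the nose, as proved in the lemma immediately preceding this one, and since $n > 0$ every $n$-homogeneous functor has trivial $0$-polynomial approximation, so $\mathrm{red}(-)$ is naturally isomorphic to the identity on fibrant objects of $n\homog\E_0^\mathbf{O}$.

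The main obstacle is lifting this reduced commutation to the derived level, because it mixes left- and right-derived functors: for bifibrant $X \in \O(n)\E_n^\mathbf{O}$, the object $r^*X$ is fibrant but not necessarily cofibrant, so cofibrant replacement is required before $\U(n)_+ \wedge_{\O(n)}$ can be applied. The resolution exploits two features visible in Figure \ref{fig 1}. First, $r^* \colon \O(n)\E_n^\mathbf{O} \to \O(n)\E_n^\mathbf{U}$ is itself a Quillen equivalence, so every object of $\Ho(\O(n)\E_n^\mathbf{U})$ is equivalent to $r^*X$ for some bifibrant $X$. Second, the vertical adjunctions $\res_0^n/\Aut(n) \dashv \ind_0^n\varepsilon^*$ are Quillen equivalences, allowing us to compare derived functors indirectly through their inverses on the homotopy category. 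Concretely, the underived identity $r^*(\res_0^n X/\O(n)) = \res_0^n/\U(n)((\U(n)_+ \wedge_{\O(n)})(r^*X))$ applied to a bifibrant $X$, combined with the fact that the composite $\res_0^n/\U(n) \circ (\U(n)_+ \wedge_{\O(n)})$ is a composition of left Quillen functors preserving weak equivalences between suitable representatives, yields a zig-zag of weak equivalences realising the desired natural isomorphism of derived functors.
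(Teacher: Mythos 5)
Your reduction of the large diagram to the pentagon is valid and inverts the paper's logical order: the paper proves this lemma \emph{first}, directly, and then deduces the pentagon from it in the final lemma of Section~\ref{section: homotopy categories}, whereas you try to deduce this lemma \emph{from} the pentagon. That inversion is fine in principle, but it means you must supply an independent proof that the pentagon commutes at the level of derived functors, and that is exactly where your argument has a gap.

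The underived commutation of the pentagon (the lemma you cite) is an equality of compositions of raw functors, some of which are left Quillen and some right Quillen. To pass to derived functors you must show that the relevant cofibrant and fibrant replacements can be slotted in compatibly. Concretely, on a bifibrant $X \in \O(n)\E_n^\mathbf{O}$ the two paths become $r^*\mathrm{red}\bigl(R(\res_0^n X/\O(n))\bigr)$ and $\res_0^n\bigl(\U(n)_+ \wedge_{\O(n)} Q(r^*X)\bigr)/\U(n)$, and the underived identity only relates $r^*(\res_0^n X/\O(n))$ to $\res_0^n\bigl(\U(n)_+ \wedge_{\O(n)} r^*X\bigr)/\U(n)$. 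Bridging these requires two facts that you do not establish: (i) that $\U(n)_+ \wedge_{\O(n)}(-)$ followed by $\res_0^n(-)/\U(n)$ carries the weak equivalence $Q(r^*X) \to r^*X$ (whose target is \emph{not} cofibrant) to a weak equivalence --- this is where freeness of the $\O(n)$-action on $\U(n)$, and hence the coincidence of strict and homotopy orbits, must enter; and (ii) that $r^*\mathrm{red}(-)$ carries the fibrant replacement $\res_0^n X/\O(n) \to R(\res_0^n X/\O(n))$ in $n\homog\E_0^\mathbf{O}$ to a weak equivalence in $n\homog\E_0^\mathbf{U}$, which is exactly the kind of statement the paper's remark at the end of Section~\ref{section: poly and homog} warns may fail for general $n$-homogeneous equivalences. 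Your phrase ``preserving weak equivalences between suitable representatives'' does not address either point. The paper sidesteps all of this by explicitly identifying both composites using the classification of $n$-homogeneous functors (Proposition~\ref{homog characterisation}) and then invoking Lemma~\ref{homog r}, which already contains the freeness argument needed to compare the change-of-group on the spectrum side with pre-realification on the functor side. If you want to keep your route, you must make the freeness argument and the fibrant-replacement comparison explicit; otherwise, follow the paper and prove the big diagram first by classification, and derive the pentagon afterwards.
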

\begin{proof}
By the zig-zag of Quillen equivalences, \cite[Proposition 8.3 and Theorem 10.1]{BO13} the composite
\[
\mathds{L}\res_0^n/\O(n) \circ \mathds{R}(\beta_n)^*
\]
applied to an orthogonal spectrum $\Theta$ with an action of $\O(n)$, is levelwise weakly equivalent to the functor $F$, defined by the formula
\[
V \longmapsto \Omega^\infty[(S^{\R^n \otimes V} \wedge \Theta)_{h\O(n)}].
\]
This functor is $n$--homogeneous, hence also reduced, so $\mathds{R}r^*\mathrm{red}(F)$ is levelwise weakly equivalent to $\mathbb{R}r^*F$. The zig-zag of Quillen equivalences from unitary calculus, \cite[Theorems 6.8 and 7.5]{Ta19}, together with inflating $\Theta$ to a spectrum with an action of $\U(n)$ gives a similar characterisation in terms of an  $n$-homogeneous functor. The result then follows by Proposition \ref{homog r}. 
\end{proof}

A similar result holds true for similar diagram on the right of Figure \ref{fig 1}, utilising Lemma \ref{homog for c}, rather than Proposition \ref{homog r}. 

\begin{lem}\label{big dia for c}
The following diagram of homotopy categories
\[
\xymatrix@C+1cm{
\Ho(\s^\mathbf{U}[\U(n)]) \ar[r]^{\mathds{L}r_!} \ar[d]_{\mathds{R}(\alpha_n)^*} & \Ho(\s^\mathbf{O}[\U(n)]) \ar[r]^{\O(2n)_+ \wedge_{\U(n)}^\mathds{L} (-)} & \Ho(\s^\mathbf{O}[\O(2n)]) \ar[d]^{\mathds{R}(\beta_{2n})^*} \\
\Ho(\U(n)\E_n^\mathbf{U}) \ar[d]_{\mathds{L}\res_0^n/\U(n)} & & \Ho(\O(2n)\E_n^\mathbf{O}) \ar[d]^{\mathds{L}\res_0^n/\O(2n)} \\
\Ho(n\homog\E_0^\mathbf{U}) \ar[rr]_{\mathds{R}c^*\mathrm{red}(-)} & & \Ho((2n)\homog\E_0^\mathbf{O}) 
}
\]
commutes up to natural isomorphism. 
\end{lem}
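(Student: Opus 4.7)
The plan is to mirror the proof of Lemma \ref{big dia for r} almost verbatim, with Lemma \ref{homog for c} playing the role previously played by Lemma \ref{homog r}. Starting with a unitary spectrum $\Theta$ with a $\U(n)$-action, I would first compute the left-and-down composite: by the zig-zag of Quillen equivalences of \cite[Theorems 6.8 and 7.5]{Ta19}, the composite $\mathds{L}\res_0^n/\U(n) \circ \mathds{R}(\alpha_n)^*$ applied to $\Theta$ is levelwise weakly equivalent to the $n$-homogeneous unitary functor
\[
E_\Theta \colon V \longmapsto \Omega^\infty[(S^{\C^n \otimes_\C V} \wedge \Theta)_{h\U(n)}].
\]
Since $E_\Theta$ is $n$-homogeneous, it is in particular reduced, so $\mathrm{red}(E_\Theta) \simeq E_\Theta$, and the left-then-bottom composite applied to $\Theta$ is levelwise weakly equivalent to $\mathds{R}c^*(E_\Theta)$.

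Next, I would trace the top-and-right composite. Applying $\mathds{L}r_!$ followed by $\O(2n)_+\wedge_{\U(n)}^\mathds{L}(-)$ produces the orthogonal spectrum with $\O(2n)$-action given by the derived construction $\O(2n)_+\wedge_{\U(n)}^\mathds{L}\Theta$ appearing explicitly in the proof of Lemma \ref{homog for c}. Then applying $\mathds{L}\res_0^{2n}/\O(2n) \circ \mathds{R}(\beta_{2n})^*$ and invoking the orthogonal zig-zag \cite[Proposition 8.3 and Theorem 10.1]{BO13}, this spectrum corresponds to the $(2n)$-homogeneous orthogonal functor
\[
V \longmapsto \Omega^\infty[(S^{\R^{2n}\otimes V} \wedge (\O(2n)_+\wedge_{\U(n)}^\mathds{L}\Theta))_{h\O(2n)}].
\]

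To close the argument, I would appeal to Lemma \ref{homog for c}: its proof establishes precisely that the $(2n)$-homogeneous orthogonal functor just displayed is levelwise weakly equivalent to $c^* E_\Theta$. Composing these identifications gives the required natural isomorphism in the homotopy category. No substantial obstacle is expected, since the only nontrivial point is the same one encountered in Lemma \ref{big dia for r}: namely, that the $\mathrm{red}(-)$ factor in the bottom row acts trivially on the homogeneous output of the opposite composite, allowing us to reduce to the already-established classification of homogeneous functors in Lemma \ref{homog for c}.
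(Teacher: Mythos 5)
Your proof is correct and mirrors exactly the argument the paper has in mind: the paper itself only offers the one-line remark that the lemma follows as in Lemma \ref{big dia for r} with Lemma \ref{homog for c} replacing Lemma \ref{homog r}, and your proposal is a faithful unpacking of that. One small notational point worth tightening: the spectrum appearing in Lemma \ref{homog for c} is the \emph{orthogonal} spectrum $\Psi_E^n$ with $\U(n)$-action, so the top-right composite should be recorded as $\O(2n)_+\wedge_{\U(n)}^\mathds{L}(\mathds{L}r_!\Theta)$ rather than $\O(2n)_+\wedge_{\U(n)}^\mathds{L}\Theta$, with the identification $\Psi_{E_\Theta}^n\simeq\mathds{L}r_!\Theta$ being what lets you invoke the classification; your argument implicitly uses this but it is worth making explicit.
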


\begin{cor}\label{other big diagram for r}
The following diagram of homotopy categories
\[
\xymatrix@C+1cm{
\Ho(\s^\mathbf{O}[\O(n)]) \ar[r]^{\U(n)_+ \wedge_{\O(n)}^\mathds{L} (-)}  & \Ho(\s^\mathbf{O}[\U(n)]) \ar[r]^{\mathds{R}r^*} & \Ho(\s^\mathbf{U}[\U(n)]) \ar[d]^{\mathds{R}(\alpha_n)^*} \\
\Ho(\O(n)\E_n^\mathbf{O}) \ar[u]^{\mathds{L}{(\beta_n)}_!} \ar[d]_{\mathds{L}\res_0^n/\O(n)} & & \Ho(\U(n)\E_n^\mathbf{U}) \ar[d]^{\mathds{L}\res_0^n/\U(n)} \\
\Ho(n\homog\E_0^\mathbf{O}) \ar[rr]_{\mathds{R}r^*\mathrm{red}(-)} & & \Ho(n\homog\E_0^\mathbf{U}) 
}
\]
commutes up to natural isomorphism. 
\end{cor}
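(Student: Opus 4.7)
The plan is to deduce this corollary directly from Lemma \ref{big dia for r} by exploiting the fact that the Quillen adjunction $(\beta_n)_! \dashv (\beta_n)^*$ of \cite[Proposition 8.3]{BO13} is a Quillen equivalence. Passing to homotopy categories, this gives mutually inverse equivalences $\mathds{L}(\beta_n)_! \dashv \mathds{R}(\beta_n)^*$ between $\Ho(\O(n)\E_n^\mathbf{O})$ and $\Ho(\s^\mathbf{O}[\O(n)])$, so in particular the unit and counit are natural isomorphisms.

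Concretely, first I would take the composition identity supplied by Lemma \ref{big dia for r}, namely
\[
\mathds{R}r^*\mathrm{red}(-) \circ \mathds{L}\res_0^n/\O(n) \circ \mathds{R}(\beta_n)^* \;\cong\; \mathds{L}\res_0^n/\U(n) \circ \mathds{R}(\alpha_n)^* \circ \mathds{R}r^* \circ \U(n)_+ \wedge_{\O(n)}^\mathds{L} (-),
\]
and precompose both sides with $\mathds{L}(\beta_n)_!$. On the left-hand side, the counit $\mathds{R}(\beta_n)^*\circ \mathds{L}(\beta_n)_! \xrightarrow{\cong} \mathrm{id}$ collapses the rightmost two functors, yielding exactly the composite $\mathds{R}r^*\mathrm{red}(-)\circ \mathds{L}\res_0^n/\O(n)$ that appears along the left and bottom of the diagram in the corollary. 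On the right-hand side, the composite is precisely the top-and-right path of the diagram in the corollary, namely
\[
\mathds{L}\res_0^n/\U(n) \circ \mathds{R}(\alpha_n)^* \circ \mathds{R}r^* \circ \U(n)_+ \wedge_{\O(n)}^\mathds{L} (-) \circ \mathds{L}(\beta_n)_!.
\]

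The main (and essentially only) subtle point will be checking that the natural isomorphism produced by unit/counit of the Quillen equivalence $(\beta_n)_! \dashv (\beta_n)^*$ splices cleanly with the natural isomorphism of Lemma \ref{big dia for r}; this is formal once one notes that both sides of the Lemma factor through derived functors and that the relevant triangle identities hold on $\Ho(\O(n)\E_n^\mathbf{O})$. No new homotopical input is required, and in particular Proposition \ref{homog r}, which drove Lemma \ref{big dia for r}, does not need to be reinvoked.
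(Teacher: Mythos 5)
Your proposal is correct and matches the paper's own proof: both precompose the identity of Lemma \ref{big dia for r} with $\mathds{L}(\beta_n)_!$ and use $\mathds{R}(\beta_n)^*\circ\mathds{L}(\beta_n)_!\cong\mathrm{id}$, which holds because $(\beta_n)_!\dashv(\beta_n)^*$ is a Quillen equivalence.
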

\begin{proof}
By Lemma \ref{big dia for r}, there is a natural isomorphism
\[
\mathds{R}r^*\mathrm{red}(-) \circ \mathds{L}(\res_0^n/\O(n)) \circ \mathds{R}(\beta_n)^* \cong \mathds{L}(\res_0^n/\U(n)) \circ \mathds{R}(\alpha_n)^* \circ \mathds{R}r^* \circ (\U(n)_+ \wedge_{\O(n)}^\mathds{L}(-)).
\]
By the equivalence of the homotopy categories of $\s^\mathbf{O}[\O(n)]$ and $\O(n)\E_n^\mathbf{O}$ we have that $\mathds{R}(\beta_n)^* \circ \mathds{L}{(\beta_n)}_! \cong \mathds{1}$. It follows that
\[
\begin{split}
&\mathds{R}r^*\mathrm{red}(-) \circ \mathds{L}(\res_0^n/\O(n)) \circ \mathds{R}(\beta_n)^*\circ \mathds{L}{(\beta_n)}_!  \cong \mathds{R}r^* \mathrm{red}(-) \circ \mathds{L}(\res_0^n/\O(n)) \\
&\cong \mathds{L}(\res_0^n/\U(n)) \circ \mathds{R}(\alpha_n)^* \circ \mathds{R}r^* \circ  (\U(n)_+ \wedge_{\O(n)}^\mathds{L}(-))  \circ \mathds{L}{(\beta_n)}_! . \qedhere
\end{split}
\]
\end{proof}

\begin{cor}\label{other big diagram for c}
The following diagram of homotopy categories
\[
\xymatrix@C+1cm{
\Ho(\s^\mathbf{U}[\U(n)]) \ar[r]^{\mathds{L}r_!} & \Ho(\s^\mathbf{O}[\U(n)]) \ar[r]^{\O(2n)_+ \wedge_{\U(n)}^\mathds{L} (-)} & \Ho(\s^\mathbf{O}[\O(2n)]) \ar[d]^{\mathds{R}(\beta_{2n})^*} \\
\Ho(\U(n)\E_n^\mathbf{U})  \ar[u]^{\mathds{L}{(\alpha_n)}_!}  \ar[d]_{\mathds{L}\res_0^n/\U(n)} & & \Ho(\O(2n)\E_n^\mathbf{O}) \ar[d]^{\mathds{L}\res_0^n/\O(2n)} \\
\Ho(n\homog\E_0^\mathbf{U}) \ar[rr]_{\mathds{R}c^*\mathrm{red}(-)} & & \Ho((2n)\homog\E_0^\mathbf{O}) 
}
\]
commutes up to natural isomorphism.
\end{cor}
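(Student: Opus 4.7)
The plan is to mirror the proof of Corollary \ref{other big diagram for r}, substituting Lemma \ref{big dia for c} for Lemma \ref{big dia for r}, and exploiting the Quillen equivalence $(\alpha_n)_! \dashv (\alpha_n)^*$ between $\U(n)\E_n^\mathbf{U}$ and $\s^\mathbf{U}[\U(n)]$ in place of the analogous orthogonal Quillen equivalence involving $(\beta_n)_! \dashv (\beta_n)^*$.

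First, I would invoke Lemma \ref{big dia for c} to obtain the natural isomorphism
\[
\mathds{R}c^*\mathrm{red}(-) \circ \mathds{L}(\res_0^n/\U(n)) \circ \mathds{R}(\alpha_n)^* \cong \mathds{L}(\res_0^{2n}/\O(2n)) \circ \mathds{R}(\beta_{2n})^* \circ (\O(2n)_+ \wedge_{\U(n)}^\mathds{L} (-)) \circ \mathds{L}r_!.
\]
I would then precompose both sides with $\mathds{L}(\alpha_n)_!$. Since $(\alpha_n)_! \dashv (\alpha_n)^*$ is a Quillen equivalence, the counit of the derived adjunction provides a natural isomorphism $\mathds{R}(\alpha_n)^* \circ \mathds{L}(\alpha_n)_! \cong \mathds{1}$ on homotopy categories. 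This collapses the left-hand side to $\mathds{R}c^*\mathrm{red}(-) \circ \mathds{L}(\res_0^n/\U(n))$, while the right-hand side becomes
\[
\mathds{L}(\res_0^{2n}/\O(2n)) \circ \mathds{R}(\beta_{2n})^* \circ (\O(2n)_+ \wedge_{\U(n)}^\mathds{L} (-)) \circ \mathds{L}r_! \circ \mathds{L}(\alpha_n)_!,
\]
which is precisely the composite around the top of the diagram in the statement. This yields the desired commutativity up to natural isomorphism.

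There is no substantial obstacle here: the work has already been done in Lemma \ref{big dia for c}, and the only new input is the triangle identity arising from the Quillen equivalence $(\alpha_n)_! \dashv (\alpha_n)^*$. The one small subtlety worth flagging is that we are only composing left derived functors on the right-hand side (and likewise right derived functors along a separate branch), so there is no issue with the composability of derived functors; this is exactly what allowed the proof of Corollary \ref{other big diagram for r} to go through cleanly, and the same argument applies verbatim in the complexification setting.
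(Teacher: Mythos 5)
Your proof is correct and takes essentially the same approach as the paper's, which is a one-line deferral to ``the same argument as Corollary \ref{other big diagram for r}'' using Lemma \ref{big dia for c} and the equivalence of $\Ho(\s^\mathbf{U}[\U(n)])$ and $\Ho(\U(n)\E_n^\mathbf{U})$ --- you have simply written that argument out in full. (One minor inaccuracy in your closing remark: the right-hand composite is not built from left derived functors alone, as it contains $\mathds{R}(\beta_{2n})^*$; this mixture is harmless here since Lemma \ref{big dia for c} already establishes the natural isomorphism between the relevant mixed composites, but the stated justification is slightly off.)
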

\begin{proof}
Using the same argument as Corollary \ref{other big diagram for r} using Lemma \ref{big dia for c} and the equivalence of the homotopy categories of $\s^\mathbf{U}[\U(n)]$ and $\U(n)\E_n^\mathbf{U}$. 
\end{proof}

By restricting these larger diagrams, we obtain a homotopy category level commutation result for the lower pentagons of Figure \ref{fig 1}.

\begin{lem}
The diagram 
\[
\xymatrix@C+1cm{
\Ho(\O(n)\E_n^\mathbf{O}) \ar[r]^{\mathds{R}r^*} \ar[d]_{\mathds{L}(\res_0^n/\O(n))} & \Ho(\O(n)\E_n^\mathbf{U}) \ar[r]^{\U(n) \wedge_{\O(n)}^\mathds{L} (-)} & \Ho(\U(n)\E_n^\mathbf{U}) \ar[d]^{\mathds{L}(\res_0^n/\U(n)} \\
\Ho(n\homog\E_0^\mathbf{O}) \ar[rr]_{\mathds{R}r^*\mathrm{red}(-)} & & \Ho(n\homog\E_0^\mathbf{U})
}
\]
of derived functors commutes up to natural isomorphism.
\end{lem}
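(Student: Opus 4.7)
The plan is to deduce this lemma from Corollary \ref{other big diagram for r}, which encompasses a larger pentagon that contains the present one as a sub-pentagon. Indeed, the bottom edge $\mathds{R}r^*\mathrm{red}(-) \circ \mathds{L}(\res_0^n/\O(n))$ is common to both diagrams, and by Corollary \ref{other big diagram for r} it is naturally isomorphic to
\[
\mathds{L}(\res_0^n/\U(n)) \circ \mathds{R}(\alpha_n)^* \circ \mathds{R}r^* \circ (\U(n)_+ \wedge_{\O(n)}^{\mathds{L}}(-)) \circ \mathds{L}(\beta_n)_!.
\]
Since $\mathds{L}(\res_0^n/\U(n))$ is the last functor applied in both the top edge of the present lemma and this spectrum-mediated composite, it suffices to establish the natural isomorphism
\[
(\U(n)_+ \wedge_{\O(n)}^{\mathds{L}}(-)) \circ \mathds{R}r^* \cong \mathds{R}(\alpha_n)^* \circ \mathds{R}r^* \circ (\U(n)_+ \wedge_{\O(n)}^{\mathds{L}}(-)) \circ \mathds{L}(\beta_n)_!
\]
as functors $\Ho(\O(n)\E_n^\mathbf{O}) \to \Ho(\U(n)\E_n^\mathbf{U})$.

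I would verify this isomorphism by a chain of rewrites assembled from the previously established square commutations. Starting from the right-hand side, Lemma \ref{lem: spectra and change of group} combined with the Quillen equivalence $r_! \dashv r^*$ on spectra upgrades to the mixed diagonal commutation $\mathds{R}r^* \circ (\U(n)_+ \wedge_{\O(n)}^{\mathds{L}}(-)) \cong (\U(n)_+ \wedge_{\O(n)}^{\mathds{L}}(-)) \circ \mathds{R}r^*$ at the spectrum level, allowing $\mathds{R}r^*$ to be moved past the change-of-group. Next, Lemma \ref{square 10 commutes} combined with the Quillen equivalences $(\gamma_n)_! \dashv (\gamma_n)^*$ and $(\alpha_n)_! \dashv (\alpha_n)^*$ yields the mixed commutation $\mathds{R}(\alpha_n)^* \circ (\U(n)_+ \wedge_{\O(n)}^{\mathds{L}}(-)) \cong (\U(n)_+ \wedge_{\O(n)}^{\mathds{L}}(-)) \circ \mathds{R}(\gamma_n)^*$, pushing $\mathds{R}(\alpha_n)^*$ inward onto the intermediate category. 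Applying Lemma \ref{square 9 commutes} then rewrites $\mathds{R}(\gamma_n)^* \circ \mathds{R}r^*$ as $\mathds{R}r^* \circ \mathds{R}(\beta_n)^*$, and a final appeal to $\mathds{R}(\beta_n)^* \circ \mathds{L}(\beta_n)_! \cong \id$ collapses the tail to the identity, leaving $(\U(n)_+ \wedge_{\O(n)}^{\mathds{L}}(-)) \circ \mathds{R}r^*$, which is the left-hand side.

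The main obstacle, flagged by the authors just before the lemma, is that this pentagon mixes left and right derived functors, whereas Lemmas \ref{square 9 commutes}, \ref{square 10 commutes} and \ref{lem: spectra and change of group} furnish only the ``pure'' commutations -- commutativity of derived left adjoints among themselves, or derived right adjoints among themselves. The bridge is the pair of natural isomorphisms $\mathds{R}(-)^* \circ \mathds{L}(-)_! \cong \id$ and $\mathds{L}(-)_! \circ \mathds{R}(-)^* \cong \id$ coming from the Quillen equivalences between spectra and the intermediate categories, and between the two models for spectra; these can be inserted or removed within a composite to transpose between left and right adjoints. Once this manipulation is available, upgrading each ``pure'' commutation to the required diagonal form is formal, and the rest of the argument is a direct chase that I have outlined above.
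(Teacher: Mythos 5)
Your proposal is correct and follows essentially the same route as the paper: both deduce the pentagon from Corollary \ref{other big diagram for r} by showing, via Lemmas \ref{lem: spectra and change of group}, \ref{square 10 commutes} and \ref{square 9 commutes} together with the $\mathds{R}(\beta_n)^* \circ \mathds{L}(\beta_n)_! \cong \id$ equivalence, that the top composite matches the spectrum-mediated path of the larger diagram. Your explicit unwinding of how the ``pure'' square commutations upgrade to the mixed left/right-adjoint commutations is a more detailed account of the step the paper compresses into a single sentence.
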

\begin{proof}
By Lemma \ref{lem: spectra and change of group}, Lemma \ref{square 10 commutes} and Lemma \ref{square 9 commutes}, the composite 
\[
\mathds{L}(\res_0^n/\U(n)) \circ (\U(n)_+ \wedge_{\O(n)}^\mathds{L} (-)) \circ \mathds{R}r^*,
\]
is naturally isomorphic to the composite 
\[
\mathds{L}(\res_0^n/\U(n)) \circ \mathds{R}(\alpha_n)^* \circ \mathds{R}r^* \circ (\U(n)_+ \wedge_{\O(n)}^\mathds{L}(-)) \circ  \mathds{R}{(\beta_n)}_!.
\]
The result then follows by Corollary \ref{other big diagram for r}.
\end{proof}

\begin{lem}
The diagram 
\[
\xymatrix@C+1cm{
\Ho(\U(n)\E_n^\mathbf{U}) \ar[r]^{\mathds{R}c^*} \ar[d]_{\mathds{L}(\res_0^n/\U(n))} & \Ho(\U(n)\E_{2n}^\mathbf{O}) \ar[r]^{\O(2n) \wedge_{\U(n)}^\mathds{L} (-)} & \Ho(\O(2n)\E_{2n}^\mathbf{O}) \ar[d]^{\mathds{L}(\res_0^{2n}/\O(2n))} \\
\Ho(n\homog\E_0^\mathbf{U}) \ar[rr]_{\mathds{R}c^*\mathrm{red}(-)} & & \Ho((2n)\homog\E_0^\mathbf{O})
}
\]
of derived functors commutes up to natural isomorphism.
\end{lem}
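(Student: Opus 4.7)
The plan is to mimic the proof of the preceding lemma (the realification version) almost verbatim, with complexification in place of realification throughout. Specifically, I will rewrite the right-then-down composite
\[
\mathds{L}(\res_0^{2n}/\O(2n)) \circ (\O(2n)_+ \wedge_{\U(n)}^\mathds{L}(-)) \circ \mathds{R}c^*
\]
by applying three commutation squares in succession, and then invoke Corollary \ref{other big diagram for c} to identify the result with the down-then-right composite $\mathds{R}c^*\mathrm{red}(-) \circ \mathds{L}(\res_0^n/\U(n))$.

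The three commutations to apply are the complexification-side analogues of the three squares used in the realification argument. Lemma \ref{intermediate categories and spectra for c} relates the change-of-model $c^*$ on intermediate categories to the change-of-model $r_!$ on categories of spectra, mediated by the Quillen equivalences $(\alpha_n)_! \dashv (\alpha_n)^*$ and $(\delta_{2n})_! \dashv (\delta_{2n})^*$. Lemma \ref{change of group for c} exchanges the change of group $\O(2n)_+ \wedge_{\U(n)}(-)$ on intermediate categories with the corresponding change of group on spectra, via $(\beta_{2n})^*$ and $(\delta_{2n})^*$. Finally, Lemma \ref{lem: spectra and change of group} specialised to the subgroup inclusion $\kappa \colon \U(n) \hookrightarrow \O(2n)$ commutes change of model with change of group at the spectrum level. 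Chaining these three natural isomorphisms together should produce a natural isomorphism between the composite above and the spectrum-level composite appearing on the right-hand side of Corollary \ref{other big diagram for c}, namely
\[
\mathds{L}(\res_0^{2n}/\O(2n)) \circ \mathds{R}(\beta_{2n})^* \circ (\O(2n)_+ \wedge_{\U(n)}^\mathds{L}(-)) \circ \mathds{L}r_! \circ \mathds{L}{(\alpha_n)}_!,
\]
after which the lemma follows immediately from Corollary \ref{other big diagram for c}.

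The main obstacle, as in the realification case, is that the square we wish to fill mixes left and right Quillen functors: the three commutation lemmas record either right-adjoint or left-adjoint commutativity of a point-set square, whereas the desired identification requires $\mathds{R}c^*$ to be traded for a composite beginning with the left-derived functor $\mathds{L}{(\alpha_n)}_!$. Bridging this gap requires inserting the derived unit and counit isomorphisms from the Quillen equivalences of Sections \ref{section: spectra} and \ref{section: intermediate categories} at the appropriate places; this bookkeeping is routine but is the only nontrivial step, since each individual commutation square has already been established earlier in the paper.
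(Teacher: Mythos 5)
Your proposal is correct and follows essentially the same route as the paper: rewrite $(\O(2n)_+\wedge^{\mathds{L}}_{\U(n)}(-)) \circ \mathds{R}c^*$ using the commutation squares, land on the spectrum-level composite of Corollary \ref{other big diagram for c}, and conclude. One small remark: unlike the realification pentagon, the complexification pentagon does not require Lemma \ref{lem: spectra and change of group}. In the realification case the intermediate-level route corresponds at the spectrum level to ``change model, then change group,'' whereas Corollary \ref{other big diagram for r} has ``change group, then change model,'' so the extra spectrum-level swap is needed; in the complexification case the intermediate-level route already corresponds to ``change model, then change group,'' which is exactly the order appearing in Corollary \ref{other big diagram for c}, so Lemma \ref{intermediate categories and spectra for c} and Lemma \ref{change of group for c} suffice. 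Including the extra citation does no harm, but it is redundant.
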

\begin{proof}
We have to show that 
\[
\mathds{R}c^* \mathrm{red}(-)\circ \mathds{L}(\res_0^n/\U(n)) \cong \mathds{L}(\res_0^{2n}/\O(2n)) \circ (\O(2n) \wedge_{\U(n)}^\mathds{L} (-)) \circ \mathds{R}c^*.
\]
By Lemma \ref{intermediate categories and spectra for c} and Lemma \ref{change of group for c} we can replace (up to natural isomorphism) the composite  $(\O(2n) \wedge_{\U(n)}^\mathds{L} (-)) \circ \mathds{R}c^*$ with the composite 
\[
\mathds{R}(\beta_{2n})^* \circ (\O(2n) \wedge_{U(n)}^\mathds{L} (-)) \circ \mathds{R}r^* \circ \mathds{L}{(\alpha_n)}_!.
\]
Corollary  \ref{other big diagram for c} and the fact that the homotopy categories of $\s^\mathbf{U}[\O(n)]$ and $\U(n)\E_n^\mathbf{U}$ are equivalent yields that the composite
\[
\mathds{L}(\res_0^{2n}/\O(2n)) \circ \mathds{R}(\beta_{2n})^* \circ (\O(2n) \wedge_{\U(n)}^\mathds{L} (-)) \circ \mathds{R}r^* \circ \mathds{L}{(\alpha_n)}_!
\]
is naturally isomorphic to the composite 
\[
\mathds{R}c^*\mathrm{red}(-) \circ \mathds{L}(\res_0^n/\U(n))
\]
and the result follows. 
\end{proof}

\bibliography{references}
\bibliographystyle{plain}
\end{document}